

\documentclass[a4paper,12pt]{article}

\usepackage[mathscr]{eucal}

\usepackage{amscd}
\usepackage{amssymb,amsfonts,amsmath,amsthm}
\usepackage{verbatim}

\usepackage{amsmath}
\usepackage{amsthm}
\usepackage{amssymb}
\usepackage{graphicx}

\usepackage{latexsym}
\usepackage{array}
\usepackage{enumerate}

\pagestyle{plain}

\setlength{\topmargin}{-45pt}
\setlength{\oddsidemargin}{0cm}
\setlength{\evensidemargin}{0cm}
\setlength{\textheight}{23.7cm}
\setlength{\textwidth}{16cm}

\numberwithin{equation}{section}


\newcommand{\shom}{{\mathcal{H}}om}

\newcommand{\CC}{\mathbb{C}}

\newcommand{\RR}{\mathbb{R}}
\newcommand{\QQ}{\mathbb{Q}}
\newcommand{\ZZ}{\mathbb{Z}}

\newcommand{\F}{\mathcal{F}}
\newcommand{\G}{\mathcal{G}}
\newcommand{\M}{\mathcal{M}}

\newcommand{\sho}{\mathcal{O}}
\newcommand{\R}{\mathcal{R}}

\newcommand{\PP}{{\mathbb P}}

\newcommand{\KK}{{\rm K}}

\newcommand{\HSm}{{\rm HS}^{\rm mon}}
\newcommand{\LL}{{\mathbb L}}
\newcommand{\height}{{\rm ht}}
\newcommand{\Var}{{\rm Var}}
\renewcommand{\(}{\left(}
\renewcommand{\)}{\right)}

\newcommand{\LK}{\mathrm{lk}}
\newcommand{\MCS}{\mathcal{S}}
\newcommand{\UH}{\mathrm{UH}}

\newcommand{\mer}{{\rm mero}}
\newcommand{\merc}{{\rm mero,c}}

\renewcommand{\dim}{{\rm dim}}

\newcommand{\Vol}{{\rm Vol}}

\newcommand{\e}{\varepsilon}
\newcommand{\Spec}{{\rm Spec}}

\newcommand{\id}{{\rm id}}
\newcommand{\supp}{{\rm supp}}

\newcommand{\Int}{{\rm Int}}
\newcommand{\relint}{{\rm rel.int}}

\newcommand{\Db}{{\bf D}^{\mathrm{b}}}
\newcommand{\Dbc}{{\bf D}_{\mathrm{c}}^{\mathrm{b}}}

\newcommand{\tl}[1]{\widetilde{#1}}

\newcommand{\simto}{\overset{\sim}{\longrightarrow}}

\newcommand{\dsum}{\displaystyle \sum}


\newcommand{\GR}{\mathrm{Gr}}


\newtheorem{theorem}{Theorem}[section]

\newtheorem{corollary}[theorem]{Corollary}
\newtheorem{lemma}[theorem]{Lemma}
\newtheorem{proposition}[theorem]{Proposition}

\theoremstyle{definition}
\newtheorem{definition}[theorem]{Definition}
\theoremstyle{remark}
\newtheorem{remark}[theorem]{\sc Remark}
\newtheorem{example}[theorem]{\sc Example}





\title{Meromorphic nearby cycle functors and 
monodromies of meromorphic functions \\ 
(with Appendix by T. Saito) 
\footnote{{\bf 2010 Mathematics 
Subject Classification: }14F05, 
14M25, 32C38, 32S05, 32S40}}

\author{Tat Thang NGUYEN 
\footnote{Institute of Mathematics, 
Vietnam Academy of Science and Technology, 18 
Hoang Quoc Viet Road, Cau Giay District, Hanoi, Vietnam. 
E-mail: ntthang@math.ac.vn}, 
and Kiyoshi TAKEUCHI 
\footnote{Mathematical Institute, Tohoku University, 
Aramaki Aza-Aoba 6-3, Aobaku, Sendai, 980-8578, Japan. 
E-mail: takemicro@nifty.com} 
}

\date{}

\sloppy

\begin{document}

\maketitle

\begin{abstract}
We introduce meromorphic nearby cycle functors and 
study their functorial properties. Moreover we apply them 
to monodromies of meromorphic functions in various 
situations. Combinatorial descriptions of their 
reduced Hodge spectra and 
Jordan normal forms will be obtained. 
\end{abstract}

\maketitle

\section{Introduction}\label{sec:s1}

\par
In \cite{G-L-M} Gusein-Zade, Luengo and Melle-Hern\'andez 
generalized Milnor's fibration theorem to meromorphic functions 
and defined their Milnor fibers. Moreover they obtained 
a formula for their monodromy zeta functions. 
Since then many authors studied Milnor fibers of 
meromorphic functions (see e.g. \cite{B-P}, 
\cite{B-P-S}, \cite{GV-L-M}, \cite{L-W}, \cite{Thang}, \cite{P}, 
\cite{Raibaut-2} \cite{S-T-1}, 
\cite{Tibar} etc.). However, in contrast to 
Milnor fibers of holomorphic functions, 
the geometric structures of those 
of meromorphic functions look much more complicated. 
For example, Milnor \cite{Milnor} proved 
that if a holomorphic function 
has an isolated singular point then the Milnor 
fiber at it has the homotopy type of a bouquet of 
some spheres. This implies that its reduced 
cohomology groups are concentrated in the 
middle dimension. To the best of our knowledge, 
we do not know so far such a nice structure theorem for 
Milnor fibers of meromorphic functions. 
For this reason, we cannot know any property 
of each Milnor monodromy operator of 
a meromorphic function even if we 
have a formula for its monodromy zeta function. 
We have also a serious obstruction that the theory of 
nearby and vanishing cycle functors for 
meromorphic functions is not fully 
developed yet. Indeed, nowadays 
the corresponding functors 
for holomorphic functions are 
not only indispensable 
for the study of Milnor monodromies 
but also very useful in many fields of 
mathematics. 

 In this paper, we overcome the above-mentioned problems 
partially by laying a foundation of the 
theory of nearby cycle functors for 
meromorphic functions. In particular, we prove 
that they preserve the perversity as 
in the holomorphic case. Then we apply 
our new algebraic machineries to Milnor monodromies of 
meromorphic functions. In this way, we 
obtain various new results on them especially 
for the eigenvalues $\lambda \not= 1$. 
In order to describe our results more precisely, 
from now we prepare some notations. 
Let $X$ be a complex manifold and $P(x), Q(x)$ 
holomorphic functions on it. Assume that $Q(x)$ 
is not identically zero on each connected component 
of $X$. Then we define a meromorphic function $f(x)$ on 
$X$ by 
\begin{equation}
f(x)= \frac{P(x)}{Q(x)}  \qquad (x \in X). 
\end{equation}
Let us set $I(f)=P^{-1}(0) \cap Q^{-1}(0) \subset X$. 
If $P$ and $Q$ are coprime in the local ring 
$\sho_{X,x}$ at a point $x \in X$, then 
$I(f)$ is nothing but the set of the indeterminacy 
points of $f$ on a neighborhood of $x$. Note that 
the set $I(f)$ depends on the pair $(P(x), Q(x))$ of 
holomorphic functions representing $f(x)$. 
For example, if we take a holomorphic function 
$R(x)$ on $X$ (which is not identically zero on 
each connected component of $X$) and set 
\begin{equation}
g(x)= \frac{P(x)R(x)}{Q(x)R(x)}  \qquad (x \in X), 
\end{equation}
then the set $I(g)=I(f) \cup R^{-1}(0)$ might be 
bigger than $I(f)$. In this way, we 
distinguish $f(x)= \frac{P(x)}{Q(x)}$ from 
$g(x)= \frac{P(x)R(x)}{Q(x)R(x)}$ even if 
their values coincide over an open dense 
subset of $X$. 
This is the convention due to 
Gusein-Zade, Luengo and Melle-Hern\'andez 
\cite{G-L-M} etc. 
Now we recall the following 
fundamental theorem due to \cite{G-L-M}. 

\begin{theorem}\label{the-fib} 
(Gusein-Zade, Luengo and Melle-Hern\'andez 
\cite{G-L-M}) 
For any point $x \in P^{-1}(0)$ there exists 
$\e_0> 0$ such that for any $0< \e < \e_0$ 
and the open ball $B(x; \e ) \subset X$ of 
radius $\e >0$ with center at $x$ 
(in a local chart of $X$) the restriction 
\begin{equation}
B(x; \e ) \setminus Q^{-1}(0) 
\longrightarrow \CC
\end{equation}
of $f: X \setminus Q^{-1}(0) 
\longrightarrow \CC$ is a locally trivial 
fibration over a sufficiently small 
punctured disk in $\CC$ with center at 
the origin $0 \in \CC$ 
\end{theorem}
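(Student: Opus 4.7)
The plan is to reduce to Milnor's classical fibration theorem for holomorphic functions after resolving the indeterminacy of $f$. First, applying Hironaka's theorem to the coherent ideal sheaf $(P, Q) \subset \sho_X$, I would take a proper modification $\pi \colon \widetilde{X} \to X$ which is an isomorphism over $X \setminus I(f)$ and such that the composition $\widetilde{f} := f \circ \pi$ extends to a holomorphic map $\widetilde{X} \to \PP^1$. One may further arrange that $\widetilde{f}^{-1}(0) \cup \widetilde{f}^{-1}(\infty)$ is a simple normal crossing divisor in a neighborhood of the compact set $\pi^{-1}(x)$.

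Next, I would pull back the real analytic distance function from $X$: set $\rho(y) = \|\pi(y) - x\|^2$ in a local chart on $X$ centered at $x$. Applying the curve selection lemma to $\rho$ restricted to each stratum of a Whitney stratification of $\widetilde{f}^{-1}(0)$, one shows that $\rho$ has no critical values in some interval $(0, \e_0^2]$ on such strata. This yields transversality of the real hypersurface $\pi^{-1}(S(x;\e)) \subset \widetilde{X}$ with every stratum of $\widetilde{f}^{-1}(0)$ for $0 < \e \leq \e_0$; by openness of transversality together with compactness of $\pi^{-1}(\overline{B(x;\e)}) \cap \widetilde{f}^{-1}(0)$, the transversality persists with $\widetilde{f}^{-1}(t)$ for all $t$ in a sufficiently small punctured disk $D_\delta^* = \{t \in \CC : 0 < |t| < \delta\}$.

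Then, consider the compact manifold-with-boundary $\widetilde{N}_\e = \pi^{-1}(\overline{B(x;\e)}) \cap \widetilde{f}^{-1}(\overline{D_\delta^*})$. By the above transversality, the restriction $\widetilde{f} \colon \widetilde{N}_\e \to \overline{D_\delta^*}$ is a proper submersion (compatibly with its boundary stratum), so Ehresmann's fibration theorem with boundary implies it is a locally trivial fibration. Finally, I would descend via $\pi$, which is a biholomorphism over the open set $B(x;\e) \setminus Q^{-1}(0) \subset X \setminus I(f)$, to conclude that the restriction of $f$ to $(B(x;\e) \setminus Q^{-1}(0)) \cap f^{-1}(D_\delta^*)$ is a locally trivial fibration over $D_\delta^*$.

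The main obstacle is the transversality step, which is the meromorphic analogue of Milnor's classical transversality. The extra care needed here stems from the exceptional divisor of $\pi$, whose irreducible components may be either dominant or contracted by $\widetilde{f}$; they all lie over $I(f) \subset Q^{-1}(0)$, and one must verify that they are compatible with the Whitney stratification used for the curve selection argument and that they do not contribute spurious connected components of the fiber when one descends the fibration to the open set $B(x;\e) \setminus Q^{-1}(0)$.
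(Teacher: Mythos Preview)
The paper does not give its own proof of this statement: it is quoted in the Introduction as a theorem of Gusein--Zade, Luengo and Melle--Hern\'andez with a citation to \cite{G-L-M}, and no argument is supplied. So there is nothing in the paper to compare your proposal against.

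Your outline is the standard route and is essentially the one taken in the original reference: resolve the indeterminacy, prove a Milnor-type transversality upstairs via curve selection, invoke a fibration theorem, and descend through $\pi$ using that $I(f)\subset Q^{-1}(0)$. One technical point deserves care. The set $\widetilde{N}_\e=\pi^{-1}(\overline{B(x;\e)})\cap\widetilde{f}^{-1}(\overline{D_\delta^*})$ is in general \emph{not} a manifold with boundary: the pulled-back radius $\rho=\|\pi(\cdot)-x\|^2$ fails to be a submersion along the exceptional locus (where $d\pi$ drops rank), so $\rho^{-1}(\e^2)$ can be singular over $I(f)\cap S(x;\e)$, and the dominant exceptional components you mention sit inside $\widetilde{N}_\e$. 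Plain Ehresmann therefore does not apply directly. The correct tool is Thom's first isotopy lemma (stratified Ehresmann) applied to the proper map $(\rho,\widetilde f)$ on a Whitney stratification of a neighbourhood of $\pi^{-1}(x)$ adapted to both the exceptional divisor and $\widetilde f^{-1}(\{0,\infty\})$; the transversality you obtain from curve selection is precisely what is needed to verify the hypotheses. The stratified local trivialisation then preserves the open stratum, and since $\pi$ is an isomorphism over $X\setminus I(f)\supset B(x;\e)\setminus Q^{-1}(0)$ the descent goes through. Your final paragraph already flags exactly this issue, so this is a matter of naming the right theorem rather than a gap in the strategy.
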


We call the fiber in this theorem the Milnor fiber of 
the meromorphic function $f(x)= \frac{P(x)}{Q(x)}$ 
at $x \in P^{-1}(0)$ and denote it by $F_x$. 
As in the holomorphic case, we obtain also 
its Milnor monodromy operators
\begin{equation}
\Phi_{j,x}: H^j(F_x; \CC ) \simto H^j(F_x; \CC ) \qquad 
(j \in \ZZ ). 
\end{equation}
Then we define 
the monodromy zeta function $\zeta_{f,x}(t) \in \CC (t)$ 
of $f$ at $x \in P^{-1}(0)$ by 
\begin{equation}
\zeta_{f,x}(t) = \prod_{j \in \ZZ} 
\Bigl\{ {\rm det}( \id - t \Phi_{j,x}) \Bigr\}^{(-1)^j}  \in \CC (t). 
\end{equation}
In \cite{G-L-M} Gusein-Zade, Luengo and Melle-Hern\'andez 
obtained a formula which expresses $\zeta_{f,x}(t) \in \CC (t)$ 
in terms of the Newton polyhedra of $P$ and $Q$ at $x$ 
(for the details, see Theorem \ref{the-2} below). 
However it is not possible to deduce any property of 
each monodromy operator $\Phi_{j,x}$ from it. 
From now, we shall explain how we can 
overcome this problem for the 
eigenvalues $\lambda \not= 1$ of $\Phi_{j,x}$. 
First we extend the classical 
notion of nearby cycle functors to 
meromorphic functions as follows 
(see also Raibaut \cite{Raibaut-2} for a 
similar but sligthly different approach to them). 
Denote by $\Db(X)$ the 
derived category whose objects are 
bounded complexes of sheaves 
of $\CC_X$-modules on $X$. 
For the meromorphic function $f(x)= \frac{P(x)}{Q(x)}$ let 
\begin{equation}
i_f: X \setminus Q^{-1}(0) \hookrightarrow 
X \times \CC_t
\end{equation}
be the (not necessarily) 
closed embedding defined by $x \mapsto (x,f(x))$. 
Let $t: X \times \CC \rightarrow \CC$ be the 
second projection. Then 
for $\F \in \Db(X)$ we set 
\begin{equation}
\psi_f^{\mer}( \F ):= \psi_t( Ri_{f*}
( \F |_{X \setminus Q^{-1}(0)}) ) 
\in \Db(X). 
\end{equation}
We call $\psi_f^{\mer}( \F )$ the meromorphic 
nearby cycle sheaf of $\F$ along $f$. 
Then as in the holomorphic case, 
for any point $x \in P^{-1}(0)$ and 
$j \in \ZZ$ we have an isomorphism 
(see Lemma \ref{lem-1}) 
\begin{equation}
H^j \psi_f^{\mer}( \F )_x \simeq 
H^j (F_x; \F ). 
\end{equation}
Moreover we will show 
(see Theorem \ref{the-1}) that the functor 
\begin{equation}
\psi_f^{\mer}( \cdot ) : 
\Db(X) \longrightarrow \Db(X)
\end{equation}
preserves the constructibility and 
the perversity (up to some shift). Then thanks to 
the perversity, we obtain the following theorem. 
The problem being local, we may 
assume that $X= \CC^n$ and $0 \in I(f)=P^{-1}(0) 
\cap Q^{-1}(0)$. For $j \in \ZZ$ and $\lambda \in \CC$ 
we denote by 
\begin{equation}
H^j(F_0; \CC )_{\lambda} \subset H^j(F_0; \CC )
\end{equation}
the generalized eigenspace of $\Phi_{j,0}$ 
for the eigenvalue $\lambda$. 

\begin{theorem}\label{the-3} 
Assume that the hypersurfaces $P^{-1}(0)$ and 
$Q^{-1}(0)$ of $X= \CC^n$ have an isolated 
singular point at the origin $0 \in X= \CC^n$ 
and intersect transversally on $X \setminus \{ 0 \}$. 
Then for any $\lambda \not=1$ we have 
the concentration 
\begin{equation}
H^j(F_0; \CC )_{\lambda} \simeq 0 
\qquad (j \not= n-1). 
\end{equation}
\end{theorem}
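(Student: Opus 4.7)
The plan is to interpret the desired vanishing as the concentration in a single cohomological degree of a perverse sheaf whose support is the single point $\{0\}$. The transversality hypothesis is used precisely to pin down the support of the $\lambda\neq 1$ generalized-eigenspace summand of $\psi_{f}^{\mer}(\CC_{X})$ to $\{0\}$.

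First, since $X=\CC^{n}$ is smooth of complex dimension $n$, the shifted constant sheaf $\CC_{X}[n]$ is perverse. By Theorem \ref{the-1}(ii) the complex $\psi_{f}^{\mer}(\CC_{X}[n])[-1]=\psi_{f}^{\mer}(\CC_{X})[n-1]$ is a perverse sheaf on $X$. The eigenspace decomposition $\psi_{f}^{\mer}(\CC_{X})[n-1]=\bigoplus_{\lambda\in\CC}\psi_{f,\lambda}^{\mer}(\CC_{X})[n-1]$ is a direct-sum decomposition in $\Dbc(X)$, and since the perverse $t$-structure is stable under direct summands, each piece $\psi_{f,\lambda}^{\mer}(\CC_{X})[n-1]$ is itself perverse on $X$.

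The key geometric step is to show that for every $\lambda\neq 1$ one has $\supp\psi_{f,\lambda}^{\mer}(\CC_{X})\subseteq\{0\}$. By Lemma \ref{lem-1}(i) the support is already contained in $P^{-1}(0)$, so it suffices to kill the stalk at each $x\in P^{-1}(0)\setminus\{0\}$. If $x\notin Q^{-1}(0)$, then $f$ is holomorphic near $x$ and, by Remark \ref{rem-1} together with the smoothness of $P^{-1}(0)$ at $x$, the stalk $\psi_{f}^{\mer}(\CC_{X})_{x}$ is the classical nearby cycle of a holomorphic function vanishing transversely on a smooth hypersurface; the Milnor fiber is contractible and the monodromy is trivial, so only the $\lambda=1$ eigenspace occurs. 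If instead $x\in I(f)\setminus\{0\}$, transversality gives local coordinates in which $P$ and $Q$ become monomials in two disjoint coordinates. A local blow-up resolving the indeterminacy at $x$, together with Propositions \ref{PDI} and \ref{ProI}, reduces the computation to the nearby-cycle analysis already carried out in the proof of Proposition \ref{ProII}: the transversality produces at least two distinct coordinate branches in the exceptional divisor, and no $\lambda\neq 1$ can simultaneously satisfy the two root-of-unity conditions that arise, which forces $\psi_{f,\lambda}^{\mer}(\CC_{X})_{x}=0$.

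Once the support of $\psi_{f,\lambda}^{\mer}(\CC_{X})$ is known to be contained in $\{0\}$, the perversity of its $(n-1)$-shift forces concentration in cohomological degree $0$: a perverse sheaf on $\CC^{n}$ supported at a single point is (up to isomorphism) a skyscraper in degree zero, as both the support and cosupport conditions collapse to the same single degree. Consequently $H^{j}\psi_{f,\lambda}^{\mer}(\CC_{X})=0$ for $j\neq n-1$, and Lemma \ref{lem-1}(iii) applied at $x=0$ identifies this with the desired vanishing $H^{j}(F_{0};\CC)_{\lambda}\simeq 0$ for $j\neq n-1$. The main obstacle is the support calculation in the case $x\in I(f)\setminus\{0\}$: although transversality gives a clean local model, one still has to perform a genuine local resolution of the indeterminacy and run the Proposition \ref{ProII}-style monodromy-eigenvalue argument to eliminate every $\lambda\neq 1$ contribution.
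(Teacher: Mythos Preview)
Your proof follows the same strategy as the paper's: use Theorem~\ref{the-1} to get perversity of $\psi_{f,\lambda}^{\mer}(\CC_X)[n-1]$, show its support is contained in $\{0\}$ for $\lambda\neq1$, and conclude concentration from the standard fact that a perverse sheaf supported at a point is a skyscraper in degree $0$. The paper packages the support computation into a single global construction---a resolution $\pi_0$ of the singularities at $0$, followed by a blow-up $\pi_1$ along the transversal intersection $\widetilde{P^{-1}(0)}\cap\widetilde{Q^{-1}(0)}$, after which Proposition~\ref{PDI} and an ``easy to see'' remark give the support containment---whereas you argue pointwise at each $x\in P^{-1}(0)\setminus\{0\}$; these are equivalent.

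One imprecision worth flagging: your appeal to ``two root-of-unity conditions'' and to Proposition~\ref{ProII} at $x\in I(f)\setminus\{0\}$ is more elaborate than necessary and does not quite match what actually happens. In local coordinates with $f=y_1/y_2$, after blowing up $\{y_1=y_2=0\}$ the order of $f\circ\pi$ is $1$ along the proper transform of $\{y_1=0\}$, $0$ along the exceptional divisor, and $-1$ along the proper transform of $\{y_2=0\}$; thus the local monodromy at every point of the zero locus is the identity, and $\psi_{f,\lambda}^{\mer}(\CC_X)_x=0$ for $\lambda\neq1$ follows immediately. No simultaneous root-of-unity conditions are involved, and Proposition~\ref{ProII} (which concerns the comparison between $\psi^{\merc}$ and $\psi^{\mer}$) is not the right reference here.
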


Combining the formula for 
$\zeta_{f,0}(t) \in \CC (t)$ in \cite{G-L-M} 
(see Theorem \ref{the-2} below) 
with our Theorem \ref{the-3}, we obtain a 
formula for the multiplicities of the eigenvalues 
$\lambda \not= 1$ in $\Phi_{n-1,0}$. It 
seems that there is some geometric background on 
$F_0$ (like Milnor's celebrated bouquet 
decomposition theorem in \cite{Milnor}) 
for Theorem \ref{the-3} to hold. 
It would be an interesting problem to know it 
and reprove Theorem \ref{the-3} in a purely 
geometric manner. From now on, 
we assume also that $f(x)= \frac{P(x)}{Q(x)}$ is 
a rational function. 
To obtain also a formula for 
the Jordan normal form of its monodromy 
$\Phi_{n-1, 0}$ as in 
Matsui-Takeuchi \cite{M-T-4}, Stapledon \cite{Stapledon} and 
Saito \cite{Saito}, we have to assume moreover that 
$f$ is polynomial-like in the following sense. 

\begin{definition}\label{def-2 7}
We say that the rational function 
$f(x)= \frac{P(x)}{Q(x)}$ is polynomial-like 
if there exists a 
resolution $\pi_0: \widetilde{X} \rightarrow X= \CC^n$
of singularities of $P^{-1}(0) \cup Q^{-1}(0)$ 
which induces an isomorphism 
$\widetilde{X} \setminus \pi_0^{-1}( \{ 0 \} ) 
\simto X \setminus \{ 0 \}$ such that for 
any irreducible component $D_i$ of the 
(exceptional) normal crossing divisor 
$D= \pi_0^{-1}( \{ 0 \} )$ we have the condition 
\begin{equation}
{\rm ord}_{D_i} (P \circ \pi_0) > 
{\rm ord}_{D_i} (Q \circ \pi_0). 
\end{equation}
\end{definition}

For typical examples of polynomial-like 
$f(x)= \frac{P(x)}{Q(x)}$, see Definition \ref{def-29}. 
If $f(x)$ is non-degenerate and 
satisfies the condition in it, then 
by a toric modification $\pi_0: \widetilde{X} 
\rightarrow X= \CC^n$ of $X= \CC^n$ we can 
check that it is polynomial-like in the above sense. 
Note that in the study of fibrations of mixed 
functions (of type $f \overline{g}$) 
recently Oka \cite{Oka-2} introduced a similar 
condition and called it the multiplicity condition. 
Then we obtain the following result. 

\begin{theorem}\label{weifilonofi}
In the situation of Theorem \ref{the-3}, 
assume also that $f$ is polynomial-like. Then 
for any $\lambda\neq 1$ 
the weight filtration of the mixed 
Hodge structure of $H^{n-1}(F_{0};
\CC)_{\lambda}$ is the monodromy weight 
filtration of the Milnor monodromy 
$\Phi_{n-1, 0}\colon H^{n-1}(F_{0};
\CC)_{\lambda}\overset{\sim}{\to} H^{n-1}(
F_0 ;\CC)_{\lambda}$ centered at $n-1$.
\end{theorem}

By this theorem, forgetting the eigenvalue $1$ 
parts of the Milnor monodromies $\Phi_{j, 0}$ 
we can define (see Definition \ref{def-HSP}) 
the reduced Hodge spectrum 
$\tl{\rm sp}_{f,0}(t)$ of 
the Milnor fiber $F_0$ which satisfies the symmetry 
\begin{equation}
\tl{\rm sp}_{f,0}(t) = t^n \cdot
\tl{\rm sp}_{f,0} \Bigl( \frac{1}{t} \Bigr)
\end{equation}
centered at $\frac{n}{2}$. 
Moreover, we define the motivic zeta function 
of the rational function $f$ and the 
motivic Milnor fiber as its limit. 
In fact, in \cite{Raibaut-2} Raibaut has 
defined the same objects earlier, but 
our proofs are different from his ones. 
In the last half of Section \ref{sect-2}, 
we also give more explicit formulas 
in terms of Newton polyhedrons. 
See Section \ref{sect-2} for the details. 
Then, assuming also that $f$ is non-degenerate 
at the origin $0 \in X= \CC^n$ and $P(x), Q(x)$ 
are convenient, we obtain 
combinatorial descriptions of the reduced Hodge spectrum 
$\tl{\rm sp}_{f,0}(t)$ of $F_0$ and the Jordan 
normal forms of $\Phi_{n-1, 0}$ for 
the eigenvalues $\lambda \not= 1$ as in 
Esterov-Takeuchi \cite{E-T}, Matsui-Takeuchi \cite{M-T-4}, 
Stapledon \cite{Stapledon} and 
Saito \cite{Saito}. See Section \ref{sec:7} 
for the details. We can also globalize 
these results and obtain 
similar formulas for monodromies at infinity of 
the rational function 
$f(x)= \frac{P(x)}{Q(x)}$. They are natural 
generalizations of the results in 
Libgober-Sperber \cite{L-S}, Matsui-Takeuchi 
\cite{M-T-2}, \cite{M-T-3}, Stapledon \cite{Stapledon} 
and Takeuchi-Tib{\u a}r \cite{T-T}. 
See Section \ref{sec:9} for the details.

\bigskip
\noindent{\bf Acknowledgement:} 
The authors would like to 
express their hearty gratitude to Professors 
David Massey and J\"org Sch\"urmann for useful discussions 
on the content of Section \ref{sec:s2}. 
Especially, by an idea of Professor 
David Massey we could simplify the proof of 
Theorem \ref{the-1}. The authors thank also Takahiro Saito 
for several fruitful discussions during the 
preparation of this paper. Especially, Lemma~\ref{weifillem} 
is due to him. Last but not least, they are very 
grateful to the anonymous referees whose suggestions 
have substentially improved this paper.  The first author 
is partially supported by Vietnam National Foundation for Science and 
Technology Development (NAFOSTED) under grant number 101.04-2019.305 
and by the bilateral joint research
project between Vietnam Academy of Science and Technology (VAST) 
and the Japan Society for the Promotion of Science (JSPS) 
under the Grant QTJP01.02/21-23.

\section{Meromorphic nearby cycle functors}\label{sec:s2}

In this section, we introduce 
meromorphic nearby cycle functors and 
study their functorial properties. 
In this paper we essentially 
follow the terminology of 
\cite{Dimca}, \cite{H-T-T} and \cite{K-S}. 
Let $k$ be an arbitrary field and for a topological 
space $X$ denote by $\Db(X)$ the 
derived category whose objects are 
bounded complexes of sheaves 
of $k_X$-modules on $X$. If $X$ is a complex manifold, 
we denote by $\Dbc(X)$ the full 
subcategory of $\Db(X)$ consisting of 
constructible objects. 
Let $X$, $P(x), Q(x)$ and $f(x)= \frac{P(x)}{Q(x)}$ etc. 
be as in Section \ref{sec:s1} and for $\F \in \Db(X)$ 
define the meromorphic 
nearby cycle sheaf $\psi_f^{\mer}( \F ) 
\in \Db(X)$ of $\F$ along $f$ as before. 
Then we have the following results. 

\begin{lemma}\label{lem-1} 
\begin{enumerate}
\item[\rm{(i)}] The support of $\psi_f^{\mer}( \F )$ is 
contained in $P^{-1}(0)$. 
\item[\rm{(ii)}] There exists an isomorphism 
\begin{equation}
\psi_f^{\mer}( \F ) \simto 
\psi_f^{\mer}( R 
\Gamma_{X \setminus (P^{-1}(0) \cup Q^{-1}(0))} 
( \F )). 
\end{equation}
\item[\rm{(iii)}] For any point $x \in P^{-1}(0)$ and 
$j \in \ZZ$ we have an isomorphism 
\begin{equation}
H^j \psi_f^{\mer}( \F )_x \simeq 
H^j (F_x; \F )
\end{equation}
compatible with the monodromy automorphisms 
on the both sides. 
\end{enumerate}
\end{lemma}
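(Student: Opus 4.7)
The plan is to deduce each of the three statements from standard properties of the classical nearby cycle functor $\psi_t$ transported along the graph embedding $i_f$, keeping in mind that $i_f$ is only a \emph{locally} closed embedding of $X \setminus Q^{-1}(0)$ into $X \times \CC_t$. Throughout I identify $t^{-1}(0) \cong X \times \{0\}$ with $X$, so $\psi_t$ of any complex on $X \times \CC_t$ is automatically supported on $X$.

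For (i), fix $x \in X \setminus P^{-1}(0)$; it suffices to show that $Ri_{f*}(\F|_{X \setminus Q^{-1}(0)})$ vanishes on a small neighborhood of $(x, 0)$ in $X \times \CC_t$. If $Q(x) \neq 0$, then $f$ is holomorphic near $x$ with $f(x) = P(x)/Q(x) \neq 0$, so the graph of $f$ stays a positive distance away from $(x, 0)$. If $Q(x) = 0$, then $|f(y)| \to \infty$ as $y \to x$ within $X \setminus Q^{-1}(0)$, so again the graph avoids a sufficiently small neighborhood of $(x, 0)$. In either case $i_f^{-1}$ of a small enough neighborhood of $(x, 0)$ is empty, and $Ri_{f*}(\cdots)$ vanishes there.

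For (ii), set $Z := P^{-1}(0) \cup Q^{-1}(0)$ and apply $\psi_f^{\mer}$ to the excision triangle $R\Gamma_{X \setminus Z}(\F) \to \F \to R\Gamma_Z(\F) \xrightarrow{+1}$. It suffices to check $\psi_f^{\mer}(R\Gamma_Z(\F)) = 0$. The restriction of $R\Gamma_Z(\F)$ to $X \setminus Q^{-1}(0)$ is supported on $Z \cap (X \setminus Q^{-1}(0)) = P^{-1}(0) \setminus Q^{-1}(0) \subset \{f = 0\}$, so its image under $Ri_{f*}$ is supported in $X \times \{0\} = t^{-1}(0)$. Any complex supported on $t^{-1}(0)$ is killed by $\psi_t$, since the Milnor tubes computing $\psi_t$ lie in $t^{-1}(\CC^*)$.

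For (iii), apply the standard stalk formula that computes $\psi_t(\G)_{(x,0)}$ (for constructible $\G$) as the hypercohomology of a Milnor fiber $B((x,0); \delta) \cap t^{-1}(c)$ of $t$, or equivalently as $R\Gamma$ of the universal cover of the Milnor tube $M := B((x,0); \delta) \cap t^{-1}(D^*_\eta)$. Taking $\G = Ri_{f*}(\F|_{X \setminus Q^{-1}(0)})$ and using the open-set adjunction $R\Gamma(V; Ri_{f*}\CK) = R\Gamma(i_f^{-1}V; \CK)$, the computation reduces to $R\Gamma$ of the universal cover of $M' := i_f^{-1}(M)$ over $D^*_\eta$. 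By Theorem \ref{the-fib}, $M'$ is precisely the total space of the Milnor fibration of $f$ at $x$, and its universal cover over $D^*_\eta$ retracts onto a single Milnor fiber $F_x$, producing $R\Gamma(F_x; \F)$ with the Milnor monodromy action. The main technical point is that $i_f$ is not closed over $I(f) = P^{-1}(0) \cap Q^{-1}(0)$, so one cannot invoke a direct compatibility like $\psi_t \circ Ri_{f*} \simeq Ri_{f*} \circ \psi_f$; instead Theorem \ref{the-fib} plays the role that properness of $i_f$ would play in the purely holomorphic case.
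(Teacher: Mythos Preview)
Your proof is correct, and for (ii) and (iii) it takes a different route from the paper. For (ii), the paper deduces the vanishing of $\psi_f^{\mer}(R\Gamma_Z(\F))$ (with $Z = P^{-1}(0) \cup Q^{-1}(0)$) from (iii), using that the Milnor fiber $F_x$ is disjoint from $Z$; your direct support argument is independent of (iii) and hence logically cleaner. (One notational quibble: in the paper's convention the triangle reads $R\Gamma_Z(\F) \to \F \to R\Gamma_{X \setminus Z}(\F) \xrightarrow{+1}$, matching the direction of the arrow in the statement of (ii); your triangle is written in the reversed order, though this does not affect the argument.) For (iii), the paper first invokes constructibility of $Ri_{f*}(\F|_{X \setminus Q^{-1}(0)})$---a forward reference to Theorem~\ref{the-1}---in order to apply the Milnor-fiber stalk formula for $\psi_t$, and then uses a Whitney stratification and a transversality argument to base-change the pushforward to a single slice $X \times \{t\}$. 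Your approach via the universal-cover definition of $\psi_t$, pulled back through $i_f$ by base change along the covering map, lands directly on the Milnor tube of $f$ and invokes Theorem~\ref{the-fib}; this avoids the forward reference to constructibility entirely. Two small refinements would sharpen your (iii): take the neighborhood of product form $B(x;\varepsilon) \times D_\eta$ with $0 < \eta \ll \varepsilon$ so that $i_f^{-1}(M)$ is literally a Milnor tube of $f$, and make explicit the (automatic) base-change isomorphism between $\hat{\pi}^{-1} Ri_{f*}$ and the pushforward along the lifted embedding, which holds because $\hat{\pi}$ is a local homeomorphism.
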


\begin{proof} The assertion (i) is trivial. 
For the proof of (ii) it suffices to show 
\begin{equation}
\psi_f^{\mer}( R 
\Gamma_{P^{-1}(0) \cup Q^{-1}(0)} ( \F ))
\simeq 0. 
\end{equation}
But this follows from (iii). 
Let us prove (iii). The problem being local, 
we may assume that $X= \CC^n$ 
and $x$ is the origin $0\in P^{-1}(0) 
\subset X= \CC^n$. 
We denote by $F_{t,0}$ the (usual) Milnor fiber 
of the projection $t\colon X \times 
\CC_{t}\to \CC_{t}$ at $0 \in X= \CC^n$. 
We will see later that $Ri_{f*}
( \F |_{X \setminus Q^{-1}(0)})$ is 
constructible (see the proof of Theorem~\ref{the-1}).
Therefore, by the basic fact for 
the nearby cycle functors 
(see e.g. \cite[Proposition~4.2.2]{Dimca}),  
we have an isomorphism
\begin{align}
H^{j} \psi_t( Ri_{f*}
( \F |_{X \setminus Q^{-1}(0)}) )_{0}
\simeq H^{j}(F_{t,0}; Ri_{f*}
( \F |_{X \setminus Q^{-1}(0)}))
\end{align}
which is compatible with the 
monodromy automorphisms on the both sides. 
Consider a Whitney stratification of 
$X \times \CC$ adapted to $Ri_{f*}
( \F |_{X \setminus Q^{-1}(0)})$ and refining 
the partition $X \times \CC = 
i_f(X \setminus Q^{-1}(0)) \sqcup 
(X \times \CC) \setminus i_f(X \setminus Q^{-1}(0))$. 
Then, for any $t \in \CC^*$ such that 
$0< |t| \ll 1$ the hyperplane $H_t= X \times \{ t \} 
\subset X \times \CC$ of $X \times \CC$ 
intersects the strata in it transversally 
on a neighborhood of $F_{t,0} \subset H_t$ 
(see e.g. \cite[Proposition 1.3]{M}). 
This implies that for the inclusion map 
\begin{equation}
i_{f,t}: i_f^{-1}(H_t) \simeq 
i_f(X \setminus Q^{-1}(0)) \cap H_t 
\hookrightarrow H_t
\end{equation}
we have an isomorphism 
\begin{equation}
\{ Ri_{f*}( \F |_{X \setminus Q^{-1}(0)})
\}|_{H_t} \simeq 
R(i_{f,t})_* ( \F |_{i_f^{-1}(H_t)}) 
\end{equation}
on the open subset $F_{t,0} \subset H_t$ of $H_t$. 
Since we have $i_{f,t}^{-1}(F_{t,0})=F_{0}$, 
the cohomology group 
$H^{j}(F_{t,0}; Ri_{f*}
( \F |_{X \setminus Q^{-1}(0)}))$ 
is isomorphic to $H^{j}(F_{0}; \F )$. 
\end{proof}

From now on, we shall prove that the functor 
\begin{equation}
\psi_f^{\mer}( \cdot ) : 
\Db(X) \longrightarrow \Db(X)
\end{equation}
thus defined preserves the constructibility and 
the perversity (up to some shift). For the 
closed embedding 
\begin{equation}
k_f: X \setminus Q^{-1}(0) \hookrightarrow 
(X \setminus Q^{-1}(0)) \times \CC_t
\end{equation}
defined by $x \mapsto (x,f(x))$ and the 
inclusion map $j_f: (X \setminus Q^{-1}(0)) \times \CC_t 
\hookrightarrow X \times \CC_t$ we have 
$i_f=j_f \circ k_f$ and hence an isomorphism 
\begin{equation}
Ri_{f*}( \F |_{X \setminus Q^{-1}(0)}) \simeq 
Rj_{f*}(Rk_{f*}( \F |_{X \setminus Q^{-1}(0)}) ). 
\end{equation}
Moreover, if $\F \in \Db(X)$ is constructible 
(resp. perverse), then $Rk_{f*}( \F |_{X \setminus Q^{-1}(0)}) 
\in \Db ( (X \setminus Q^{-1}(0)) \times \CC_t )$ is 
constructible (resp. perverse). However the functor 
\begin{equation}
Rj_{f*}  : 
\Db(  (X \setminus Q^{-1}(0)) \times \CC_t  ) 
\longrightarrow \Db( X \times \CC_t )
\end{equation}
does not preserve the constructibility 
(resp. perversity) in general. Nevertheless, 
we can overcome this difficulty as follows. 

\begin{theorem}\label{the-1} 
\begin{enumerate}
\item[\rm{(i)}] If $\F \in \Db(X)$ is constructible, then 
$\psi_f^{\mer}( \F ) \in \Db(X)$ is also constructible. 
\item[\rm{(ii)}] If $\F \in \Db(X)$ is perverse, then 
$\psi_f^{\mer}( \F )[-1] \in \Db(X)$ is also perverse. 
\end{enumerate}
\end{theorem}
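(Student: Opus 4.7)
The difficulty in the statement is that the graph embedding $i_f \colon X \setminus Q^{-1}(0) \hookrightarrow X \times \CC_t$ is not closed: the graph of $f$ accumulates above $I(f) \times \CC_t$, and in the factorization $i_f = j_f \circ k_f$ the functor $R(j_f)_*$ does not visibly preserve constructibility or perversity. The plan is to replace $f$ by a holomorphic morphism via a resolution of indeterminacy, and to transport the result back to $X \times \CC_t$ by proper base change for $\psi_t$.

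To carry this out, I would choose a proper modification $\pi \colon \widetilde{X} \to X$ which is an isomorphism over $X \setminus I(f)$ and such that $\widetilde{f} := f \circ \pi$ extends to a holomorphic morphism $\widetilde{X} \to \PP^1$, with $(\widetilde{P}\widetilde{Q})^{-1}(0)$ a normal crossing divisor (Hironaka), where $\widetilde{P} = P \circ \pi$ and $\widetilde{Q} = Q \circ \pi$. Let $V := \widetilde{X} \setminus \widetilde{f}^{-1}(\infty)$; then the graph embedding $\widetilde{i} \colon V \hookrightarrow \widetilde{X} \times \CC_t$ is a closed immersion. Put $\widetilde{U} := \pi^{-1}(X \setminus Q^{-1}(0)) \subset V$ and $j \colon \widetilde{U} \hookrightarrow V$ (whose complement is contained in the NCD $\widetilde{Q}^{-1}(0)$); since $\pi|_{\widetilde{U}}$ is an isomorphism onto $X \setminus Q^{-1}(0)$, one obtains
\[
Ri_{f*}(\F|_{X \setminus Q^{-1}(0)}) \simeq R(\pi \times \id)_* R\widetilde{i}_* Rj_* (\pi^{-1}\F|_{\widetilde{U}}).
\]
Applying $\psi_t$ together with the proper base change $\psi_t R(\pi\times\id)_* \simeq R\pi_* \psi_{t'}$, where $t'$ is the projection on $\widetilde{X}\times\CC_t$, yields
\[
\psi_f^{\mer}(\F) \simeq R\pi_* \psi_{t'}(R\widetilde{i}_* Rj_*(\pi^{-1}\F|_{\widetilde{U}})).
\]
Part (i) then follows by tracking constructibility through each factor: $Rj_*$ by local NCD model computations, $R\widetilde{i}_*$ as a closed immersion, $\psi_{t'}$ classically, and $R\pi_*$ by properness.

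Part (ii) is the main obstacle. The perverse analogues of the inner steps go through: $Rj_*$ is t-exact for the perverse t-structure (analytic Artin-type vanishing for open immersions with NCD complement), $R\widetilde{i}_*$ is t-exact (closed immersion), and $\psi_{t'}[-1]$ is t-exact (classical). However, $R\pi_*$ does not preserve perversity in general. To sidestep this I would run the perversity argument directly on $X \times \CC_t$ without passing through $\widetilde{X}$: since $j_f$ has complement the hypersurface $Q^{-1}(0) \times \CC_t$, a Stein/affine-open analogue of Artin vanishing in the analytic setting should give that $R(j_f)_*$ is t-exact on perverse sheaves, so $Ri_{f*}(\F|_{X \setminus Q^{-1}(0)}) = Rj_{f*} Rk_{f*}(\F|_{X \setminus Q^{-1}(0)})$ is perverse on $X \times \CC_t$; then $\psi_t[-1]$ yields $\psi_f^{\mer}(\F)[-1]$ perverse. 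Making this Stein--Artin input rigorous---that is, controlling the accumulation of the graph over $I(f) \times \CC_t$---is the heart of the matter, and is likely where the simplification attributed to Massey in the acknowledgement enters; the resolution $\pi$ above can serve as the NCD backbone for this verification.
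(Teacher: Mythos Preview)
Your approach for (i) via resolution of indeterminacies is correct but more elaborate than the paper's. The paper avoids resolution entirely by introducing the hypersurface $W = \{(x,t) : P(x) - tQ(x) = 0\} \subset X \times \CC$: the graph of $f$ is the open subset $\rho^{-1}(X \setminus Q^{-1}(0))$ of $W$ (where $\rho\colon W\to X$ is the first projection), and a short diagram chase gives
\[
Ri_{f*}(\F|_{X \setminus Q^{-1}(0)}) \;\simeq\; R\shom_{\CC_{X\times\CC}}\bigl(\CC_{(X\setminus Q^{-1}(0))\times\CC},\, i_{W*}\rho^{-1}\F\bigr),
\]
which is constructible by a standard result (\cite[Theorem 8.5.7(ii)]{K-S}) since both arguments are. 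This $W$-trick is the Massey simplification alluded to in the acknowledgement --- it concerns the constructibility step, not the perversity step.

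For (ii) you have exactly the right idea, and your worry is misplaced. The paper does precisely what you propose: having established constructibility of $Ri_{f*}(\F|_{X\setminus Q^{-1}(0)})$ on $X\times\CC$, it simply cites the t-exactness of $Rj_{f*}$ for the Stein open immersion $j_f$ (complement a hypersurface) from \cite[Proposition 10.3.17(i)]{K-S} and \cite[p.~410]{Sch}. There is nothing further to control over $I(f)\times\CC$: the perversity of $Rj_{f*}\G$ for perverse constructible $\G$ is a general fact about open immersions with Stein complement and needs no analysis specific to the accumulation of the graph. Your own resolution-based formula already supplies the constructibility of $Ri_{f*}(\F|_{X\setminus Q^{-1}(0)})$ on $X\times\CC$, so the Stein argument applies immediately; you have not left a gap, only overestimated its difficulty.
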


\begin{proof} Assume that $\F \in \Db(X)$ is 
constructible. Define a hypersurface $W$ of 
$X \times \CC_t$ by 
\begin{equation}
W= \{ (x,t) \in X \times \CC \ | \ 
P(x)-tQ(x)=0 \} 
\end{equation}
and let $\rho : W \rightarrow X$ be the 
restriction of the first projection 
$X \times \CC_t \rightarrow X$ to it. 
Then $\rho$ induces an isomorphism 
\begin{equation}
\rho^{-1} (X \setminus Q^{-1}(0)) 
\simto X \setminus Q^{-1}(0)
\end{equation}
and $\rho^{-1} (X \setminus Q^{-1}(0))$ 
is nothing but the graph 
\begin{equation}
\{ (x, f(x)) \in (X \setminus Q^{-1}(0)) 
\times \CC \ | \ x \in X \setminus Q^{-1}(0) \} 
\end{equation}
of $f: X \setminus Q^{-1}(0) \rightarrow \CC$. 
In this way, we identify $X \setminus Q^{-1}(0)$ 
and the open subset $\rho^{-1} (X \setminus Q^{-1}(0))$ 
of $W$. Let 
\begin{equation}
\iota_f: X \setminus Q^{-1}(0) \simeq 
\rho^{-1} (X \setminus Q^{-1}(0)) 
\hookrightarrow W 
\end{equation}
and $i_W: W \hookrightarrow X \times \CC_t$ 
be the inclusion maps. Then for the 
constructible sheaf $\F \in \Db(X)$ 
we have an isomorphism 
\begin{equation}
Ri_{f*}( \F |_{X \setminus Q^{-1}(0)}) \simeq 
i_{W*}(R \iota_{f*}( \F |_{X \setminus Q^{-1}(0)}) ). 
\end{equation}
Moreover, by the Cartesian diagram 
\begin{equation}
\begin{CD}
X \setminus Q^{-1}(0)  @>{\iota_f}>> W 
\\
@V{k_f}VV       @VV{i_W}V
\\
(X \setminus Q^{-1}(0)) \times \CC @>>{j_f}> X \times \CC 
\end{CD}
\end{equation}
we obtain isomorphisms 
\begin{align}
Rj_{f_*}j_f^{-1}(i_{W*} \rho^{-1} \F )
& \simeq 
Rj_{f_*}k_{f*} \iota_f^{-1} \rho^{-1} \F 
\nonumber 
\\
 & \simeq 
i_{W*}(R \iota_{f*}( \F |_{X \setminus Q^{-1}(0)}) ). 
\end{align}
Then we obtain the constructibility of 
\begin{align}
Ri_{f*}( \F |_{X \setminus Q^{-1}(0)}) 
& \simeq 
Rj_{f_*}j_f^{-1}(i_{W*} \rho^{-1} \F )
\nonumber 
\\
 & \simeq 
R \shom_{\CC_{X \times \CC}}( 
\CC_{(X \setminus Q^{-1}(0)) \times \CC}, 
i_{W*} \rho^{-1} \F )
\end{align}
by \cite[Theorem 8.5.7 (ii)]{K-S}. If moreover 
$\F \in \Db(X)$ is perverse, then the 
perversity of $Rj_{f_*}j_f^{-1}(i_{W*} \rho^{-1} \F )
\in \Dbc (X \times \CC_t)$ follows from 
\cite[Proposition 10.3.17 (i)]{K-S} on the Stein map 
$j_f: (X \setminus Q^{-1}(0)) \times \CC_t 
\hookrightarrow X \times \CC_t$ 
(see also the paragraph below 
\cite[Corollary 5.2.17]{Dimca} and Sch\"urmann 
\cite[page 410]{Sch}). 
Finally, by applying the t-exact functor 
$\psi_t( \cdot ) [-1] : \Dbc (X \times \CC_t) 
\longrightarrow \Dbc (X)$ 
to it, we obtain the assertions. 
\end{proof}

By this theorem we obtain a functor 
\begin{equation}
\psi_f^{\mer}( \cdot ) : 
\Dbc (X) \longrightarrow \Dbc (X). 
\end{equation}
Moreover by its proof, for $\F \in \Dbc (X)$ 
there exists a natural morphism 
\begin{equation}
\F_{P^{-1}(0)} \longrightarrow \psi_f^{\mer}( \F ). 
\end{equation}

\begin{remark}\label{rem-1} 
Assume that the meromorphic function $f(x)= \frac{P(x)}{Q(x)}$ 
is holomorphic on a neighborhood of a point 
$x \in X$ i.e. there exists a holomorphic function 
$g(x)$ defined on a neighborhood of 
$x \in X$ such that $P(x)=Q(x) \cdot g(x)$ on it. 
Then by identifying $f(x)$ with the 
holomorphic function $g(x)$, 
we have an isomorphism 
\begin{equation}
\psi_f^{\mer}( \F )_x \simeq 
\psi_f ( R 
\Gamma_{X \setminus Q^{-1}(0)} ( \F ))_x
\end{equation}
for the classical (holomorphic) nearby cycle 
functor $\psi_f( \cdot )$. This implies that 
even if $f(x)$ is holomorphic on 
a neighborhood of $x \in X$ we do not 
have an isomorphism 
\begin{equation}
\psi_f^{\mer}( \F )_x \simeq 
\psi_f ( \F )_x
\end{equation}
in general. 
\end{remark}

The following useful result is an analogue 
for $\psi_f^{\mer}( \cdot )$ of the 
classical one for $\psi_f( \cdot )$ 
(see e.g. \cite[Proposition 4.2.11]{Dimca} 
and \cite[Exercise VIII.15]{K-S} etc.). 

\begin{proposition}\label{PDI} 
Let $\pi : Y \longrightarrow X$ be a proper 
morphism of complex manifolds and 
$f \circ \pi$ a meromorphic function on $Y$ defined by 
\begin{equation}
f \circ \pi = \frac{P \circ \pi}{Q \circ \pi}. 
\end{equation}
Then for $\G \in \Db(Y)$ 
there exists an isomorphism 
\begin{equation}
\psi_{f}^{\mer} ( R \pi_* \G ) \simeq 
R \pi_* \psi_{f \circ \pi}^{\mer} ( \G ). 
\end{equation}
If moreover $\pi$ induces an isomorphism 
\begin{equation}
Y \setminus \pi^{-1}(P^{-1}(0) \cup Q^{-1}(0)) 
\simto X \setminus (P^{-1}(0) \cup Q^{-1}(0)), 
\end{equation}
then for $\F \in \Db(X)$ 
there exists an isomorphism 
\begin{equation}
\psi_{f}^{\mer} ( \F ) \simeq 
R \pi_* \psi_{f \circ \pi}^{\mer} ( \pi^{-1} \F ). 
\end{equation}
\end{proposition}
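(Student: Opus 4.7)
The plan is to reduce the first isomorphism to the classical proper base change theorem for the nearby cycle functor $\psi_t$ associated to the second projection $t: X\times\CC_t \to \CC_t$. To this end, introduce the proper map $\tilde\pi := \pi\times\id_{\CC_t}: Y\times\CC_t \to X\times\CC_t$ and observe that, since $(Q\circ\pi)^{-1}(0)=\pi^{-1}(Q^{-1}(0))$, the restriction $\pi' := \pi|_{Y\setminus(Q\circ\pi)^{-1}(0)}$ is still proper. The graph embeddings $i_{f\circ\pi}$ and $i_f$ then fit into the Cartesian square
$$\begin{CD}
Y\setminus(Q\circ\pi)^{-1}(0) @>{i_{f\circ\pi}}>> Y\times\CC_t \\
@V{\pi'}VV @VV{\tilde\pi}V \\
X\setminus Q^{-1}(0) @>>{i_f}> X\times\CC_t
\end{CD}$$
with $t\circ\tilde\pi=t$.

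For $\G\in\Db(Y)$ I would then chain three standard isomorphisms. First, proper base change for $\pi$ yields $(R\pi_*\G)|_{X\setminus Q^{-1}(0)}\simeq R\pi'_*(\G|_{Y\setminus(Q\circ\pi)^{-1}(0)})$. Next, applying $Ri_{f*}$ and using the commutativity of the square gives $Ri_{f*}\circ R\pi'_*\simeq R\tilde\pi_*\circ Ri_{(f\circ\pi)*}$. Finally, since $\tilde\pi$ is proper and satisfies $t\circ\tilde\pi=t$, the classical proper base change for $\psi_t$ (see e.g.\ \cite[Proposition~4.2.11]{Dimca}) produces $\psi_t\circ R\tilde\pi_*\simeq R\pi_*\circ\psi_t$, where the $R\pi_*$ on the right is obtained by restricting $\tilde\pi$ to $t^{-1}(0)\simeq X$. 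Composing these three isomorphisms yields the first assertion $\psi_f^{\mer}(R\pi_*\G)\simeq R\pi_*\psi_{f\circ\pi}^{\mer}(\G)$.

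For the second assertion, apply the first one with $\G=\pi^{-1}\F$; it then remains to identify $\psi_f^{\mer}(R\pi_*\pi^{-1}\F)$ with $\psi_f^{\mer}(\F)$. The assumption that $\pi$ restricts to an isomorphism over $X\setminus(P^{-1}(0)\cup Q^{-1}(0))$ makes the adjunction morphism $\F\to R\pi_*\pi^{-1}\F$ an isomorphism there, so its cone $\mathcal{C}$ is supported on $P^{-1}(0)\cup Q^{-1}(0)$. Combining Lemma~\ref{lem-1}~(i) (which confines $\psi_f^{\mer}(\mathcal{C})$ to $P^{-1}(0)$) with Lemma~\ref{lem-1}~(iii) then kills every stalk of $\psi_f^{\mer}(\mathcal{C})$, because each Milnor fiber $F_x$ at $x\in P^{-1}(0)$ sits in $X\setminus(P^{-1}(0)\cup Q^{-1}(0))$ and hence avoids the support of $\mathcal{C}$. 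The long exact sequence of the distinguished triangle then supplies $\psi_f^{\mer}(\F)\simeq\psi_f^{\mer}(R\pi_*\pi^{-1}\F)$. The main obstacle I foresee is not any individual step but the bookkeeping around the proper base change for $\psi_t$: one must verify that $Ri_{(f\circ\pi)*}(\G|_{Y\setminus(Q\circ\pi)^{-1}(0)})$ is constructible — which is precisely what the proof of Theorem~\ref{the-1} supplies — and carefully track the identification of stalks along $t=0$ with $\pi$ rather than $\tilde\pi$.
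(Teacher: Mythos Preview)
Your proof is correct and follows essentially the same route as the paper: the same commutative square with $\tilde\pi=\pi\times\id_{\CC_t}$, the same three-step reduction (restrict, compose, then invoke proper base change for $\psi_t$), and for the second assertion the same reduction to $\psi_f^{\mer}(\F)\simeq\psi_f^{\mer}(R\pi_*\pi^{-1}\F)$. The only cosmetic difference is that the paper handles this last isomorphism by quoting Lemma~\ref{lem-1}(ii) directly (applying $R\Gamma_{X\setminus(P^{-1}(0)\cup Q^{-1}(0))}$ to both sides), whereas you unpack that lemma via the cone argument and Lemma~\ref{lem-1}(iii); these are equivalent since (ii) is itself proved from (iii). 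Your worry about constructibility for the $\psi_t$ base change is unnecessary: \cite[Proposition~4.2.11]{Dimca} holds in $\Db$ without any constructibility hypothesis.
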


\begin{proof} 
Let $\pi^{\circ}: Y \setminus (Q \circ \pi)^{-1}(0)
\rightarrow 
X \setminus Q^{-1}(0)$ be the restriction of $\pi$ 
to $Y \setminus (Q \circ \pi)^{-1}(0)$. Then by 
the commutative diagram 
\begin{equation}
\begin{CD}
Y \setminus (Q \circ \pi)^{-1}(0)  @>{i_{f \circ \pi}}>> Y \times \CC_t
\\
@V{\pi^{\circ}}VV       @VV{\pi \times \id_{\CC_t}}V
\\
X \setminus Q^{-1}(0) @>>{i_f}> X \times \CC_t 
\end{CD}
\end{equation}
for $\G \in \Db(Y)$ 
there exist isomorphisms 
\begin{align}
  Ri_{f*}
( R \pi_* \G |_{X \setminus Q^{-1}(0)}) & 
\simeq 
Ri_{f*}
( R \pi^{\circ}_* (\G |_{Y \setminus (Q \circ \pi)^{-1}(0)}))
\nonumber 
\\
& \simeq  
R ( \pi \times \id_{\CC_t} )_* Ri_{f \circ \pi *}
(\G |_{Y \setminus (Q \circ \pi)^{-1}(0)}). 
\end{align}
Then by \cite[Proposition 4.2.11]{Dimca} 
and \cite[Exercise VIII.15]{K-S} we obtain 
isomorphisms 
\begin{align}
\psi_{f}^{\mer} ( R \pi_* \G ) 
& = \psi_t( Ri_{f*}
( R \pi_* \G |_{X \setminus Q^{-1}(0)}) ) 
\nonumber 
\\ 
& \simeq 
\psi_t( R ( \pi \times \id_{\CC_t} )_* Ri_{f \circ \pi *}
(\G |_{Y \setminus (Q \circ \pi)^{-1}(0)}))
\nonumber 
\\
& \simeq 
R \pi_* \psi_t( Ri_{f \circ \pi *}
(\G |_{Y \setminus (Q \circ \pi)^{-1}(0)})) 
= R \pi_* \psi_{f \circ \pi}^{\mer} ( \G ). 
\end{align}
Assume now that $\pi$ induces an isomorphism 
\begin{equation}
Y \setminus \pi^{-1}(P^{-1}(0) \cup Q^{-1}(0)) 
\simto X \setminus (P^{-1}(0) \cup Q^{-1}(0)). 
\end{equation}
Then for $\F \in \Db(X)$ 
we have an isomorphism 
\begin{equation}
R \Gamma_{X \setminus (P^{-1}(0) \cup Q^{-1}(0))} 
( \F ) \simto 
R \Gamma_{X \setminus (P^{-1}(0) \cup Q^{-1}(0))} 
(R \pi_* \pi^{-1} \F ). 
\end{equation}
By Lemma \ref{lem-1} (ii) we thus obtain 
isomorphisms 
\begin{align}
& \psi_{f}^{\mer} ( \F ) 
\simeq 
\psi_{f}^{\mer} ( 
R \Gamma_{X \setminus (P^{-1}(0) \cup Q^{-1}(0))} 
( \F )) 
\nonumber 
\\ 
& \simeq 
\psi_{f}^{\mer} ( R \pi_* \pi^{-1} \F ) 
\simeq 
R \pi_* \psi_{f \circ \pi}^{\mer} ( \pi^{-1} \F ). 
\end{align}
This completes the proof. 
\end{proof} 

For the meromorphic function $f(x)= \frac{P(x)}{Q(x)}$ 
and $\F \in \Db(X)$ we set also 
\begin{equation}
\psi_f^{\merc}( \F ):= \psi_t( Ri_{f!}
( \F |_{X \setminus Q^{-1}(0)}) ) 
\in \Db(X). 
\end{equation}
(see Raibaut \cite{Raibaut-2}). 
We call it the meromorphic 
nearby cycle sheaf 
with compact support of $\F$ along $f$. 
Then we obtain a functor 
\begin{equation}
\psi_f^{\merc}( \cdot ) : 
\Db(X) \longrightarrow \Db(X)
\end{equation}
which satisfies the properties similar to 
the ones in  Lemma \ref{lem-1} (i) (ii), 
Theorem \ref{the-1} and Proposition 
\ref{PDI}. Moreover if  $f$ 
is holomorphic on a neighborhood of a point 
$x \in X$, then we have an isomorphism 
\begin{equation}
\psi_f^{\merc}( \F )_x \simeq 
\psi_f ( \F_{X \setminus Q^{-1}(0)} )_x
\end{equation}
for the classical (holomorphic) nearby cycle 
functor $\psi_f( \cdot )$. 
However the isomorphism in Lemma \ref{lem-1} (iii) 
does not hold for $\psi_f^{\merc}( \F )$. 
This implies that the natural morphism 
\begin{equation}
\psi_f^{\merc}( \F ) \longrightarrow 
\psi_f^{\mer}( \F )
\end{equation}
is not an isomorphism in general. 
In fact, in \cite{Raibaut-2} Raibaut introduced 
a functor which is identical to 
our $\psi_f^{\merc}( \cdot )$. 

From now on, we restrict ourselves to the case $k= \CC$. 
For a point $x \in X$ and $\F \in \Dbc(X)$ let 
\begin{equation}
\Phi( \F )_{j,x}: H^j \psi_f^{\mer}( \F )_x 
 \simto H^j \psi_f^{\mer}( \F )_x  \qquad 
(j \in \ZZ )
\end{equation}
be the monodromy 
automorphisms of $H^j \psi_f^{\mer}( \F )$ 
at $x$. We define 
the monodromy zeta function $\zeta( \F )_{f,x}(t) \in \CC (t)$ 
of $f$ for $\F$ at $x \in X$ by 
\begin{equation}
\zeta( \F )_{f,x}(t) = \prod_{j \in \ZZ} 
\Bigl\{ {\rm det}( \id - t \Phi( \F )_{j,x}) \Bigr\}^{(-1)^j}  
\in \CC (t). 
\end{equation}
Then we obtain the following sheaf-theoretical 
reformulation of \cite[Theorem 1]{G-L-M}, 
which is also an analogue 
for meromorphic functions of 
\cite[Propositions 5.2 and 5.3]{M-T-1}. 

\begin{proposition}\label{ProI} 
(cf. \cite[Theorem 1]{G-L-M}) 
Assume that $X= \CC^n$, the hypersurface 
$P^{-1}(0) \cup Q^{-1}(0)$ of $X$ is 
a normal crossing divisor $\{ x \in X \ | \ 
x_1x_2 \cdots x_r=0 \}$ for some $1 \leq r \leq n$ 
and $f(x)=x_1^{m_1} x_2^{m_2} \cdots x_r^{m_r}$ 
($m_i \in \ZZ$). By the inclusion map 
$j: X \setminus (P^{-1}(0) \cup Q^{-1}(0)) 
\hookrightarrow X$ set 
\begin{equation}
\F 
= Rj_* ( \CC_{X \setminus (P^{-1}(0) \cup Q^{-1}(0))} ) 
\in \Dbc(X). 
\end{equation}
Then, if $r=1$ and $m_1>0$ we 
have $\zeta( \F )_{f,0}(t)=1-t^{m_1}$. 
Otherwise, we have $\zeta( \F )_{f,0}(t)=1$. 
\end{proposition}

\begin{proof} 
As in the proof of \cite[Theorem 3.6]{M-T-2} we 
construct towers of blow-ups of $X$ over 
the normal crossing divisor 
$P^{-1}(0) \cup Q^{-1}(0)$ to eliminate the 
points of indeterminacy of $f$. Then we 
obtain a proper morphism $\pi : Y \longrightarrow X$ 
of complex manifolds which induces an isomorphism 
\begin{equation}
Y \setminus \pi^{-1}(P^{-1}(0) \cup Q^{-1}(0)) 
\simto X \setminus (P^{-1}(0) \cup Q^{-1}(0)). 
\end{equation}
If $r \geq 2$, then 
by calculating  
$\zeta( \pi^{-1} \F )_{f \circ \pi ,y}(t) \in \CC (t)$ 
at each point $y$ of $\pi^{-1}(0)$ 
the assertion follows from 
\cite[page 170-173]{Dimca} (see also 
\cite{Sch} and \cite[Proposition 2.9]{M-T-2}) 
and Proposition \ref{PDI}. 
\end{proof} 

For a (shifted) perverse sheaf $\F \in \Dbc(X)$ on $X$ 
and $\lambda \in \CC$, let 
\begin{equation}
\Phi( \F ):  \psi_f^{\mer}( \F ) \simto \psi_f^{\mer}( \F )  
\end{equation}
be the monodromy automorphism of $\psi_f^{\mer}( \F )$ 
and by taking $N \gg 0$ set 
\begin{equation}
\psi_{f, \lambda}^{\mer}( \F ):= {\rm Ker} 
\Bigl[ 
\bigl( \lambda \cdot {\rm id} - \Phi( \F ) \bigr)^N: 
\psi_f^{\mer}( \F ) \longrightarrow \psi_f^{\mer}( \F )  
\Big]  \in \Dbc(X), 
\end{equation}
where the right hand side is the kernel in 
the abelian category of (shifted) perverse sheaves. 
We can define $\psi_{f, \lambda}^{\mer}( \F )$ 
also when $\F$ is not perverse but constructible. 
For a more precise account, see e.g. \cite[Remark 4.2.5]{Dimca}. 
We call $\psi_{f, \lambda}^{\mer}( \F )$ 
the generalized eigenspace 
of $\psi_{f}^{\mer}( \F )$ for the eigenvalue $\lambda$. 
Then we have a decomposition 
\begin{equation}
\psi_{f}^{\mer}( \F ) \simeq 
\bigoplus_{\lambda \in \CC} 
\psi_{f, \lambda}^{\mer}( \F ). 
\end{equation}
Similarly we can define also $\psi_{f, \lambda}^{\merc}( \F )$. 
The following result will be used in the proof 
of Proposition \ref{pro-1}. 

\begin{proposition}\label{ProII} 
Assume that $X= \CC^n$ and the meromorphic 
function $f(x)= \frac{P(x)}{Q(x)}$ is defined 
by $P(x) =x_1^{m_1} x_2^{m_2} \cdots x_k^{m_k}$ 
($m_i \in \ZZ$, $m_i>0$) for some $1 \leq k \leq n-1$ 
and $Q(x)=x_n$. Then for any $\lambda \not= 1$ 
the natural morphism 
\begin{equation}
\psi_{f, \lambda}^{\merc}( \CC_X )_0 \longrightarrow 
\psi_{f, \lambda}^{\mer}( \CC_X )_0
\end{equation}
is an isomorphism. 
\end{proposition}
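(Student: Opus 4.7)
The plan is to exploit a natural $\CC^*$-action on $X \times \CC_t$ that leaves the graph of $f$ invariant but scales the $t$-coordinate; the cone of the comparison morphism $Ri_{f!}\CC_V \to Ri_{f*}\CC_V$ will then be, on the locus $\{t \neq 0\}$, pulled back from $\{x_n = 0\}$, which forces trivial monodromy of $\psi_t$ on it. Concretely, I let $G = \CC^*$ act on $X \times \CC_t$ by
\[
s \cdot ((x_1, \dots, x_n), t) = ((x_1, \dots, x_{n-1},\ s^{-1}x_n),\ st).
\]
Since $P(x) = x_1^{m_1}\cdots x_k^{m_k}$ does not involve $x_n$, the relation $x_n t = P(x)$ is preserved, so this action leaves $V = X \setminus Q^{-1}(0)$, the open subset $V \times \CC$, the closed subset $Q^{-1}(0) \times \CC$, and the graph $Z$ of $f$ invariant. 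The embeddings $k_f\colon V \hookrightarrow V \times \CC$, $j_f\colon V \times \CC \hookrightarrow X \times \CC$, and $i_f = j_f \circ k_f$ are all $G$-equivariant (the check for $i_f$ reduces to $f(s \cdot x) = P(x)/(s^{-1}x_n) = s \cdot f(x)$), so $Ri_{f*}\CC_V$, $Ri_{f!}\CC_V = j_{f!}k_{f*}\CC_V$, and the cone $\CAC$ of their natural comparison morphism are $G$-equivariant complexes on $X \times \CC$.

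Since the natural morphism is an isomorphism over $V \times \CC$, the cone $\CAC$ is supported on $\{x_n = 0\} \times \CC$. The $G$-action fixes $\{x_n = 0\}$ pointwise (as $x_n = 0$ is scaling-invariant) and acts on $\CC_t$ by multiplication, so on the invariant open subset $\{x_n = 0\} \times \CC^*$ it is free with quotient the projection $\pi\colon \{x_n = 0\} \times \CC^* \to \{x_n = 0\}$, exhibiting this space as a trivial principal $\CC^*$-bundle. By the standard equivalence between $\CC^*$-equivariant constructible complexes on a trivial $\CC^*$-bundle and constructible complexes on the base (realized concretely by restriction along the section $\{x_n = 0\} \times \{1\}$), there exists $\CH \in \Dbc(\{x_n = 0\})$ with
\[
\CAC \big|_{\{x_n = 0\} \times \CC^*} \simeq \pi^{-1}\CH.
\]

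The stalk $\psi_t(\CAC)_0$ is determined by $\CAC$ on any sufficiently small punctured neighborhood of $\{x_n = 0\} \times \{0\}$, hence only by its restriction to $\{x_n = 0\} \times \CC^*$. Because that restriction is pulled back from $\{x_n = 0\}$, a Künneth computation gives $\psi_t(\CAC)_0 \simeq \CH_0$ with the identity as monodromy, so $\psi_{t,\lambda}(\CAC)_0 = 0$ for every $\lambda \neq 1$. Applying $\psi_t$ to the distinguished triangle $Ri_{f!}\CC_V \to Ri_{f*}\CC_V \to \CAC \to Ri_{f!}\CC_V[1]$ and extracting the generalized $\lambda$-eigenspace at the origin then yields the asserted isomorphism $\psi_{f,\lambda}^{\merc}(\CC_X)_0 \simto \psi_{f,\lambda}^{\mer}(\CC_X)_0$. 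The most delicate point is the descent statement "$G$-equivariant = pullback from base" in the constructible derived category; for the trivial $\CC^*$-bundle used here it is standard, but it should be written out carefully.
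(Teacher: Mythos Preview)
Your argument is correct and takes a genuinely different route from the paper's proof. The paper resolves the indeterminacy of $f$ by explicit towers of blow-ups (as in \cite[Theorem~3.6]{M-T-2}), pushes the comparison down via Proposition~\ref{PDI}, and then analyses the nearby cycle on the resolution by a case-by-case study in $k$ that invokes the primitive decompositions of \cite{D-S}. Your approach bypasses all of this: the $\CC^*$-symmetry exploits precisely the hypothesis that $P$ does not involve $x_n$ (so $k\le n-1$ is essential), and gives a uniform argument independent of $k$ with no resolution. What the paper's approach buys is that it fits into the general blow-up machinery already set up and needed elsewhere; what yours buys is brevity and conceptual clarity for this particular statement.

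Regarding the point you flag as delicate (descent of equivariant constructible complexes on a trivial $\CC^*$-bundle): you can sidestep the equivariant derived category entirely by a direct change of variables. Define the automorphism
\[
\Theta\colon X\times\CC^*\ \simto\ X\times\CC^*,\qquad \Theta\bigl((x_1,\dots,x_n),t\bigr)=\bigl((x_1,\dots,x_{n-1},\,t\,x_n),\,t\bigr).
\]
One checks that $\Theta$ intertwines your twisted $\CC^*$-action with the action on the second factor alone, and that $\Theta\circ i_f$ sends $x$ with $x_n\ne 0,\ P(x)\ne 0$ to $\bigl((x',P(x')),\,f(x)\bigr)$; hence $\Theta$ carries the graph $i_f(V)\cap(X\times\CC^*)$ isomorphically onto $S\times\CC^*$ with $S=\{y\in X:\ y_n=P(y)\neq 0\}$. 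It follows that $\Theta_*\bigl(Ri_{f*}\CC_V\big|_{X\times\CC^*}\bigr)\simeq (R\iota_{S*}\CC_S)\boxtimes\CC_{\CC^*}$ and likewise for $Ri_{f!}$, so $\Theta_*(\CAC|_{X\times\CC^*})$ is pulled back from $X$ along the first projection. Since $\Theta$ restricts to the identity on $\{x_n=0\}\times\CC^*$, this gives directly $\CAC|_{\{x_n=0\}\times\CC^*}\simeq \pi^{-1}\CH$ with no appeal to equivariant formalism. With this in hand your final paragraph goes through verbatim.
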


\begin{proof} 
Note that the hypersurface 
$P^{-1}(0) \cup Q^{-1}(0)$ of $X$ is 
a normal crossing divisor $\{ x \in X \ | \ 
x_1x_2 \cdots x_k x_n=0 \}$. 
As in the proof of \cite[Theorem 3.6]{M-T-2} we 
construct towers of blow-ups of $X$ over 
the normal crossing divisor 
$P^{-1}(0) \cup Q^{-1}(0)$ to eliminate the 
points of indeterminacy of $f$. Then we 
obtain a proper morphism $\pi : Y \longrightarrow X$ 
of complex manifolds which induces an isomorphism 
\begin{equation}
Y \setminus \pi^{-1}(P^{-1}(0) \cup Q^{-1}(0)) 
\simto X \setminus (P^{-1}(0) \cup Q^{-1}(0)). 
\end{equation}
By Proposition \ref{PDI} and its analogue for 
$\psi_f^{\merc}( \cdot )$ we obtain isomorphisms 
\begin{align}
\psi_{f, \lambda}^{\merc}( \CC_X ) & \simeq R \pi_* 
\psi_{f \circ \pi, \lambda}^{\merc}( \CC_Y ),
\\
\psi_{f, \lambda}^{\mer}( \CC_X ) & \simeq R \pi_* 
\psi_{f \circ \pi, \lambda}^{\mer}( \CC_Y )
\end{align}
for any $\lambda \in \CC$. Set 
$D= \pi^{-1}(P^{-1}(0) \cup Q^{-1}(0))$ and 
let $j: Y \setminus D \hookrightarrow Y$ be the 
inclusion map. Then by Remark \ref{rem-1} and 
its analogue for 
$\psi_f^{\merc}( \cdot )$ we have isomorphisms 
\begin{align}
\psi_{f \circ \pi, \lambda}^{\merc}( \CC_Y ) & 
\simeq \psi_{f \circ \pi, \lambda}( j_! \CC_{Y \setminus D}), 
\\
\psi_{f \circ \pi, \lambda}^{\mer}( \CC_Y ) & 
\simeq \psi_{f \circ \pi, \lambda}( Rj_* \CC_{Y \setminus D})
\end{align}
for any $\lambda \in \CC$. Hence it suffices to prove 
that the natural morphism 
\begin{equation}
R \Gamma ( \pi^{-1}(0); \psi_{f \circ \pi, 
\lambda}( j_! \CC_{Y \setminus D}))
\longrightarrow 
R \Gamma ( \pi^{-1}(0); 
\psi_{f \circ \pi, \lambda}( Rj_* \CC_{Y \setminus D}))
\end{equation}
is an isomorphism for any $\lambda \not= 1$. 
By the distinguished triangle 
\begin{equation}
j_! \CC_{Y \setminus D} \longrightarrow 
Rj_* \CC_{Y \setminus D} 
\longrightarrow (Rj_* \CC_{Y \setminus D})_{D} 
\overset{+1}{\longrightarrow} 
\end{equation}
we have only to show the vanishing 
\begin{equation}
R \Gamma ( \pi^{-1}(0); \psi_{f \circ \pi, \lambda}(
(Rj_* \CC_{Y \setminus D})_{D} 
)) \simeq 0 
\end{equation}
for any $\lambda \not= 1$. From now on, 
assume that $\lambda \not= 1$. First let us treat 
the simplest case $k=1$. In this case, the 
normal crossing divisor $D= \pi^{-1}(P^{-1}(0) \cup Q^{-1}(0))$ 
in $Y$ has $m_1+2$ irreducible components $D_i$ 
($-1 \leq i \leq m_1$) such that we have 
${\rm ord}_{D_i} (f \circ \pi )=i$. 
See the proof of \cite[Theorem 3.6]{M-T-2} for the details. 
Note that $D_{-1}$ 
is noting but the proper transform of the pole set 
$Q^{-1}(0)= \{ x \in X \ | \ x_n=0 \}$ of $f$ in $Y$. 
Since the support of $(Rj_* \CC_{Y \setminus D})_{D}$ 
is contained in $D$, that of its nearby cycle 
$\psi_{f \circ \pi}(
(Rj_* \CC_{Y \setminus D})_{D})$ is contained in 
$D_1 \cap D_0$. But by our assumption $\lambda \not= 1$ 
we have $\psi_{f \circ \pi, \lambda}(
(Rj_* \CC_{Y \setminus D})_{D}) \simeq 0$ also on 
$D_1 \cap D_0$. Next consider the case $k=2$. Let 
$\pi_1 : Y_1 \longrightarrow X$ be the blow-up of 
$X$ over the set $\{ x \in X \ | \ x_1=x_n=0 \} \subset 
I(f)$ in the first step of the 
construction of $\pi : Y \longrightarrow X$. 
We define divisors $D_i \subset Y_1$ 
($-1 \leq i \leq m_1$) as in the case $k=1$. 
Now let $K \subset Y_1$ be the 
proper transform of 
$\{ x \in X \ | \ x_2=0 \} \subset X$ in $Y_1$. 
Then $f \circ \pi_1$ still has some points of 
indeterminacy in the set $D_{-1} \cap K$. So 
we construct a tower of blow-ups over it 
until we get a morphism $\pi_2: Y \longrightarrow Y_1$ 
such that $\pi = \pi_1 \circ \pi_2$. 
See the proof of \cite[Theorem 3.6]{M-T-2} for the details. 
Let $E_i \subset Y$ ($-1 \leq i \leq m_2$) 
be the exceptional divisors of $\pi_2$ such that 
${\rm ord}_{E_i} (f \circ \pi )=i$. Denote the 
proper transform of $D_0 \subset Y_1$ in $Y$ by $H$. 
Then (on a neighborhood of $E= \pi_2^{-1}(D_{-1} \cap K)$ 
in $Y$) we have 
\begin{equation}
(f \circ \pi )^{-1}(0)= 
E_1 \cup E_2 \cup \cdots \cup E_{m_2}
\end{equation}
and the support of $(Rj_* \CC_{Y \setminus D})_{D}$ 
is contained in $E_{-1} \cup \cdots \cup E_{m_2} \cup H$. 
For $\lambda \not= 1$ this implies that 
the support of 
$\psi_{f \circ \pi, \lambda}(
(Rj_* \CC_{Y \setminus D})_{D})$ is contained in 
$(E_1 \cup \cdots \cup E_{m_2}) \cap H$. Moreover 
by truncation functors, it suffices to prove the 
vanishing 
\begin{equation}
R \Gamma ( \pi^{-1}(0); 
\psi_{f \circ \pi, \lambda}( \CC_H ) 
) \simeq 0. 
\end{equation}
But this follows from the primitive decompositons 
(of the graded pieces w.r.t. the weight filtration) of 
the nearby cycle sheaf $\psi_{f \circ \pi, \lambda}( \CC_H )$ 
(see e.g. \cite{D-L-1} etc. for the details). 
Indeed, for any $1 \leq i \leq m_2-1$ the restriction of 
$\psi_{f \circ \pi, \lambda}( \CC_H )$ to the subset 
\begin{equation}
\pi^{-1}(0) \cap \{ E_i \setminus (E_{i-1} \cup E_{i+1}) \} 
\simeq \CC^*
\end{equation}
of $\pi^{-1}(0)$ is zero or a non-trivial local 
system of rank one. Moreover by our assumption 
$\lambda \not= 1$ we never have 
the condition $\lambda^i= \lambda^{i+1}=1$. 
This implies that the restriction of 
$\psi_{f \circ \pi, \lambda}( \CC_H )$ to 
$E_i \cap E_{i+1}$ is zero. Similarly, we can prove 
the assertion for any $1 \leq k \leq n-1$. 
This completes the proof. 
\end{proof}

\section{Milnor monodromies of meromorphic functions}\label{sec:s3}

In this section, by using the meromorphic nearby cycle functors 
introduced in Section \ref{sec:s2} we 
study Milnor monodromies of meromorphic functions. 
Let us consider the meromorphic function $f(x)= \frac{P(x)}{Q(x)}$ 
in Section \ref{sec:s2}. The problem being local, we may 
assume that $X= \CC^n$ and $0 \in I(f)=P^{-1}(0) \cap Q^{-1}(0)$. 
Let $F_0$ be the Milnor fiber of $f$ at the origin 
$0 \in X= \CC^n$ and 
\begin{equation}
\Phi_{j,0}: H^j(F_0; \CC ) \simto H^j(F_0; \CC ) \qquad 
(j \in \ZZ )
\end{equation}
its Milnor monodromy operators. Then we define 
the monodromy zeta function $\zeta_{f,0}(t) \in \CC (t)$ 
of $f$ at the origin $0 \in X= \CC^n$ by 
\begin{equation}
\zeta_{f,0}(t) = \prod_{j \in \ZZ} 
\Bigl\{ {\rm det}( \id - t \Phi_{j,0}) \Bigr\}^{(-1)^j}  \in \CC (t). 
\end{equation}

By Propositions \ref{PDI} and \ref{ProI} we can reprove 
the following result of 
Gusein-Zade, Luengo and Melle-Hern\'andez 
\cite[Theorem 1]{G-L-M}, \cite[Theorem 1.19]{G-L-M-new-new}, 
which is an analogue for meromorphic functions 
of A'Campo's formula in \cite{Acampo}. 

\begin{theorem}\label{the-5} 
(Gusein-Zade, Luengo and Melle-Hern\'andez 
\cite[Theorem 1]{G-L-M}, \cite[Theorem 1.19]{G-L-M-new-new}) 
Let $\pi: Y \rightarrow X= \CC^n$ be a 
resolution of singularities of $P^{-1}(0) \cup 
Q^{-1}(0)$ which induces an isomorphism 
\begin{equation}
Y \setminus \pi^{-1}(P^{-1}(0) \cup Q^{-1}(0)) 
\simto X \setminus (P^{-1}(0) \cup Q^{-1}(0))
\end{equation}
such that $\pi^{-1}(0)$ and 
$D=\pi^{-1}(P^{-1}(0) \cup Q^{-1}(0))$ are 
strict normal crossing divisors in $Y$. 
Let $\cup_{i=1}^k D_i$ be the irreducible 
decomposition of $D=\pi^{-1}(P^{-1}(0) \cup Q^{-1}(0))$ 
such that $\pi^{-1}(0)= \cup_{i=1}^r D_i$ for 
some $1 \leq r \leq k$. For $1 \leq i \leq r$ set 
\begin{equation}
D_i^{\circ}= D_i \setminus (\cup_{j \not= i} D_j)
\end{equation}
and 
\begin{equation}
m_i= {\rm ord}_{D_i} (P \circ \pi ) - 
{\rm ord}_{D_i} (Q \circ \pi ) \in \ZZ. 
\end{equation}
Then we have 
\begin{equation}
\zeta_{f,0}(t) = 
\prod_{i: m_i >0} 
(1- t^{m_i})^{\chi ( D_i^{\circ} )}. 
\end{equation}
\end{theorem}

For $m \geq 1$ we define the Lefschetz number 
$\Lambda (m)_{f,0} \in \ZZ$ of the meromorphic 
function $f$ at the origin $0 \in X= \CC^n$ by 
\begin{equation}
\Lambda (m)_{f,0}= 
\dsum_{j \in \ZZ} (-1)^j {\rm tr} 
\bigl\{
\Phi_{j,0}^m : H^j(F_0; \CC ) \simto H^j(F_0; \CC )
\bigr\}. 
\end{equation}
Then as in \cite{AC} we obtain the following 
corollary (see also e.g. \cite[Remark 3.2]{M-T-2}). 

\begin{corollary}\label{cor-111} 
In the situation of Theorem \ref{the-5}, for 
any $m \geq 1$ we have 
\begin{equation}
\Lambda (m)_{f,0}= 
\dsum_{i: m_i>0, m_i|m} 
\chi ( D_i^{\circ} ) \cdot m_i. 
\end{equation}
\end{corollary}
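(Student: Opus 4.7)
The plan is to derive the Lefschetz number formula from the zeta function formula of Theorem~\ref{the-5} by a purely algebraic power series comparison, in the same spirit as A'Campo's original argument (see e.g.~\cite[Remark 3.2]{M-T-2}).

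First I would note the elementary identity for an endomorphism $T$ of a finite-dimensional complex vector space:
\begin{equation}
-\log \det( \id - tT) = \sum_{m \geq 1} \frac{{\rm tr}(T^m)}{m} t^m,
\end{equation}
which is obtained by diagonalizing (or using Jordan form) and using $-\log(1 - \alpha t) = \sum_{m\geq 1} \alpha^m t^m / m$. Summing this with the sign $(-1)^j$ over $j \in \ZZ$ and using the definition of $\Lambda(m)_{f,0}$, I obtain
\begin{equation}
\log \zeta_{f,0}(t) \; = \; - \sum_{m \geq 1} \frac{\Lambda (m)_{f,0}}{m} t^m.
\end{equation}

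Next, I would take the logarithm of the product formula provided by Theorem~\ref{the-5}:
\begin{equation}
\log \zeta_{f,0}(t) \; = \; \sum_{i: m_i > 0} \chi (D_i^{\circ}) \log (1-t^{m_i}) \; = \; - \sum_{i: m_i > 0} \chi (D_i^{\circ}) \sum_{k \geq 1} \frac{t^{m_i k}}{k}.
\end{equation}

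Finally, comparing the coefficients of $t^m$ in the two expansions of $\log \zeta_{f,0}(t)$ yields
\begin{equation}
\frac{\Lambda (m)_{f,0}}{m} \; = \; \sum_{i: m_i > 0, \; m_i | m} \frac{\chi (D_i^{\circ})}{m/m_i},
\end{equation}
which after multiplying by $m$ gives the desired formula. The main (and only nontrivial) obstacle is the bookkeeping: one must make sure that the sum over $k$ in the second expansion genuinely matches the divisibility condition $m_i | m$ in the final statement, but this is automatic since the coefficient of $t^m$ in $\sum_k t^{m_i k}/k$ is nonzero precisely when $m_i | m$ and then equals $m_i/m$. Everything else is formal.
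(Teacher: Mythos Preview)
Your argument is correct and is exactly the standard A'Campo computation the paper is alluding to; the paper itself gives no proof beyond the references to \cite{AC} and \cite[Remark 3.2]{M-T-2}, and your logarithmic comparison of the two expressions for $\zeta_{f,0}(t)$ is precisely what those references contain.
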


From now, let us prove Theorem \ref{the-3}. 

\begin{proof} 
By Theorem \ref{the-1} $\psi_f^{\mer}( \CC_X [n] )[-1] 
\in \Db (X)$ is a perverse sheaf. The same is true 
also for its $\lambda$-part $\psi_{f, \lambda}^{\mer}
( \CC_X [n] )[-1] \in \Db (X)$. 
Let $\pi_0: \widetilde{X} \rightarrow X= \CC^n$ be a 
resolution of singularities of $P^{-1}(0)$ and 
$Q^{-1}(0)$ which induces an isomorphism 
$\widetilde{X} \setminus \pi_0^{-1}( \{ 0 \} ) 
\simto X \setminus \{ 0 \}$. Let $\widetilde{P^{-1}(0)}$ 
and $\widetilde{Q^{-1}(0)}$ be the (smooth) proper 
transforms of $P^{-1}(0)$ and 
$Q^{-1}(0)$ in $\widetilde{X}$ respectively. 
We may assume that they intersect transversally. 
Now let $\pi_1: Y \rightarrow \widetilde{X}$ be 
the blow-up of $\widetilde{X}$ along 
$\widetilde{P^{-1}(0)} \cap \widetilde{Q^{-1}(0)}$ 
and set $\pi := \pi_0 \circ \pi_1:  Y \rightarrow X$. 
Then by Propostion \ref{PDI} there exists an isomorphism 
\begin{equation}
\psi_{f, \lambda}^{\mer}
( \CC_X [n] )[-1] \simeq 
R \pi_* \psi_{f \circ \pi, \lambda}^{\mer} ( \CC_Y [n] )[-1]. 
\end{equation}
By the construction of $\pi$, we can show 
that for $\lambda \not=1$ the support of 
$\psi_{f \circ \pi, \lambda}^{\mer} ( \CC_Y [n] )[-1] 
\in \Db (Y)$ is contained in $\pi^{-1}( \{ 0 \} )
\subset Y$. 
Indeed, let $E \subset Y$ be the exceptional 
divisor of the blow-up $\pi_1: Y \rightarrow \widetilde{X}$ 
and denote the (smooth) proper transform of 
$\widetilde{P^{-1}(0)}$ (resp. $\widetilde{Q^{-1}(0)}$) 
in $Y$ by $D_P$ (resp. $D_Q$). 
Then we have $D_P \cap D_Q= \emptyset$ 
and $D_P \cup D_Q \cup E$ is a normal 
crossing divisor in $Y$. Moreover 
on $Y \setminus \pi^{-1}( \{ 0 \} )$ 
(i.e. outside $\pi^{-1}( \{ 0 \} )$) 
the meromorphic function $f \circ \pi$ on $Y$ 
takes the value $0$ only on $D_P$. 
This implies that on $Y \setminus \pi^{-1}( \{ 0 \} )$ 
the support of the perverse sheaf 
$\psi_{f \circ \pi}^{\mer} ( \CC_Y [n] )[-1] 
\in \Db (Y)$ is contained in $D_P$. 
Since the divisor $D_P \cup D_Q \cup E$ is normal 
crossing, we can easily see also that 
there exists an isomorphism 
\begin{equation}
\psi_{f \circ \pi}^{\mer} ( \CC_Y [n] )[-1] 
\simeq \psi_{f \circ \pi, 1}^{\mer} ( \CC_Y [n] )[-1]
\end{equation}
on $Y \setminus \pi^{-1}( \{ 0 \} )$. 
Hence the support of the perverse 
sheaf $\psi_{f, \lambda}^{\mer}
( \CC_X [n-1] ) \in \Db (X)$ is contained 
in the origin $\{ 0 \} \subset X$. 
Then by \cite[Proposition 8.1.22]{H-T-T} 
we obtain the concentration 
\begin{equation}
H^j \psi_{f, \lambda}^{\mer}
( \CC_X [n-1] )_0 \simeq 
H^{j+n-1}(F_0; \CC )_{\lambda} \simeq 0 
\qquad (j \not= 0). 
\end{equation}
\end{proof} 

By Theorems \ref{the-5} and \ref{the-3} 
we obtain the following result. 

\begin{corollary}\label{cor-1} 
Assume that the meromorphic function 
$f(x)= \frac{P(x)}{Q(x)}$ satisfies the 
conditions in Theorem \ref{the-3}. 
Then in the notations of Theorem \ref{the-5}, 
for any $\lambda \not=1$ 
the multiplicity of the eigenvalue 
$\lambda$ in $\Phi_{n-1, 0}$ is 
equal to that of the factor $t- \lambda$ 
in the rational function 
\begin{equation}
\prod_{i: m_i >0} 
(t^{m_i}-1)^{(-1)^{n-1} \chi ( D_i^{\circ} )} 
\in \CC (t). 
\end{equation}
\end{corollary}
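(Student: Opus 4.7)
The plan is to combine Theorems \ref{the-5} and \ref{the-3} and then perform a cyclotomic bookkeeping. First I would factor each monodromy characteristic polynomial over $\CC$ into linear factors: writing $m_{j,\mu} := \dim H^j(F_0; \CC)_\mu$, one has
$$\zeta_{f,0}(t) = \prod_{j \in \ZZ}\prod_{\mu \in \CC^*} (1 - \mu t)^{(-1)^j m_{j,\mu}} \in \CC(t),$$
where only $\mu \neq 0$ appear because $\Phi_{j,0}$ is invertible. For $\lambda \neq 1$, Theorem \ref{the-3} forces $m_{j,\lambda} = 0$ for $j \neq n-1$, so the only surviving contribution of the factor $(1 - \lambda t)$ comes from degree $n-1$, and its exponent in $\zeta_{f,0}(t)$ is exactly $(-1)^{n-1} m_{n-1, \lambda}$.

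Next I would expand the right-hand side of the formula in Theorem \ref{the-5} using the cyclotomic identity $1 - t^{m_i} = \prod_{\omega^{m_i} = 1}(1 - \omega t)$. This shows that the exponent of $(1 - \lambda t)$ in $\prod_{i: m_i > 0}(1 - t^{m_i})^{\chi(D_i^\circ)}$ equals $\sum_{i : m_i > 0,\; \lambda^{m_i} = 1}\chi(D_i^\circ)$. Since both rational functions are equal by Theorem \ref{the-5}, comparing these two exponents yields
$$m_{n-1,\lambda} \;=\; (-1)^{n-1}\!\!\!\sum_{i:\; m_i > 0,\; \lambda^{m_i} = 1}\!\!\!\chi(D_i^\circ).$$

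Finally I would translate this identity into the claimed $(t-\lambda)$-factorization. Using $t^{m_i} - 1 = \prod_{\omega^{m_i} = 1}(t - \omega)$, the multiplicity of the factor $(t - \lambda)$ in $(t^{m_i} - 1)^{(-1)^{n-1}\chi(D_i^\circ)}$ is $(-1)^{n-1}\chi(D_i^\circ)$ when $\lambda^{m_i} = 1$ and $0$ otherwise. Summing over $i$ with $m_i > 0$ recovers exactly the expression for $m_{n-1,\lambda}$ obtained above, proving the corollary. No serious obstacle is expected here: both Theorems \ref{the-5} and \ref{the-3} carry all of the substantive content, and the remainder is a straightforward sign-and-root-of-unity bookkeeping, made harmless by the invertibility of the Milnor monodromy (so that $\lambda = 0$ never occurs in either factorization).
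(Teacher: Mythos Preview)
Your proof is correct and follows the same approach as the paper, which simply states that the corollary follows from Theorems~\ref{the-5} and~\ref{the-3}. You have spelled out the routine cyclotomic bookkeeping that the paper leaves implicit.
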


\begin{definition}
Let $g(x)=\sum_{v \in \ZZ^n} c_vx^v$ ($c_v\in \CC$) be a 
Laurent polynomial on the algebraic torus 
$T=(\CC^*)^n$. 
\begin{enumerate}
\item[\rm{(i)}] We call the convex hull of 
$\supp(g):=\{v\in \ZZ^n \ | \ c_v\neq 0\} 
\subset \ZZ^n \subset \RR^n$ in $\RR^n$ the 
Newton polytope of $g$ and denote it by $NP(g)$.
\item[\rm{(ii)}] If $g$ is a polynomial, 
we call the convex hull of $\cup_{v \in \supp(g)} 
(v + \RR_+^n)$ in $\RR_+^n$ the 
Newton polyhedron of $g$ at 
the origin $0 \in \CC^n$ and denote it by $\Gamma_+(g)$.
\item[\rm{(iii)}] For a face $\gamma \prec NP(g)$ of $NP(g)$, 
we define the $\gamma$-part 
$g^{\gamma}$ of $g$ by 
$g^{\gamma}(x):=\sum_{v \in \gamma} c_vx^v$. 
\end{enumerate}
\end{definition}

Let $\Gamma_+(P), \Gamma_+(Q) \subset \RR^n$ be 
the Newton polyhedra of $P$ and $Q$ at 
the origin $0 \in \CC^n$ and 
\begin{equation}
\Gamma_+(f)= \Gamma_+(P) + \Gamma_+(Q) 
\end{equation}
their Minkowski sum. From now, we recall 
Bernstein-Khovanskii-Kushnirenko's 
theorem \cite{Khovanskii}. 
Let $\Delta \subset \RR^n$ be a 
lattice polytope 
in $\RR^n$. For an element $u \in \RR^n$ of 
(the dual vector space of) $\RR^n$ we define the 
supporting face $\gamma_u \prec  \Delta$ 
of $u$ in $ \Delta$ by 
\begin{equation}
\gamma_u = \left\{ v \in \Delta \ | \ 
\langle u , v \rangle 
= 
\min_{w \in \Delta } 
\langle u ,w \rangle \right\}, 
\end{equation}
where for $u=(u_1,\ldots,u_n)$ 
and $v=(v_1,\ldots, v_n)$ we set 
$\langle u,v\rangle =\sum_{i=1}^n u_iv_i$. 
For a face $\gamma$ of $\Delta$ set 
\begin{equation}
\sigma (\gamma) = \overline{ \{ u \in \RR^n \ | \ 
\gamma_u = \gamma   \} } \subset \RR^n . 
\end{equation}
\noindent Then  $\sigma (\gamma )$ 
is an $(n- \dim \gamma )$-dimensional 
rational convex polyhedral 
cone in $\RR^n$. Moreover 
the family $\{ \sigma (\gamma ) \ | \ 
\gamma \prec  \Delta \}$ of cones in $\RR^n$ 
thus obtained is a subdivision of $\RR^n$. 
We call it the dual subdivision of $\RR^n$ by 
$\Delta$. If $\dim \Delta =n$ it 
satisfies the axiom 
of fans (see \cite{Fulton} and 
\cite{Oda} etc.). We call it the dual fan of 
$\Delta$. More generally, let $\Delta_1, \ldots, \Delta_p  
\subset \RR^n$ be lattice polytopes 
in $\RR^n$ and $\Delta = 
\Delta_1 + \cdots + \Delta_p  
\subset \RR^n$ their Minkowski sum. 
Then for a face $\gamma \prec \Delta$ of 
$\Delta$, by taking a point $u \in \RR^n$ 
in the relative interior of its dual cone $\sigma (\gamma)$ 
we define the supporting face 
$\gamma_i \prec \Delta_i$ of $u$ in $\Delta_i$ 
so that we have 
$\gamma = \gamma_1 + \cdots + \gamma_p$. 

\begin{definition}\label{non-deg} 
(see \cite{Oka} etc.) 
Let $g_1, g_2, \ldots , g_p$ be 
Laurent polynomials on $T=(\CC^*)^n$. 
Set $\Delta_i=NP(g_i)$ $(i=1,\ldots, p)$ and 
$\Delta = \Delta_1 + \cdots + \Delta_p$. 
Then we say that the subvariety 
$Z=\{ x\in T=(\CC^*)^n \ | \ g_1(x)=g_2(x)= 
\cdots =g_p(x)=0 \}$ of $T=(\CC^*)^n$ is a 
non-degenerate complete intersection 
if for any face $\gamma \prec \Delta$ of 
$\Delta$ the $p$-form $dg_1^{\gamma_1} \wedge 
dg_2^{\gamma_2} \wedge 
\cdots \wedge dg_p^{\gamma_p}$ does not vanish 
on $\{ x\in T=(\CC^*)^n  \ | \ 
g_1^{\gamma_1}(x)= \cdots =
g_p^{\gamma_p}(x)=0 \}$.
\end{definition}

\begin{definition}\label{def-1}
Let $\Delta_1,\ldots,\Delta_n$ 
be lattice 
polytopes in $\RR^n$. Then 
their normalized $n$-dimensional 
mixed volume 
$\Vol_{\ZZ}( \Delta_1,\ldots,\Delta_n) 
\in \ZZ$ is defined by the formula 
\begin{equation}
\Vol_{\ZZ}( \Delta_1, \ldots , \Delta_n)=
\frac{1}{n!} 
\dsum_{k=1}^n (-1)^{n-k} 
\sum_{\begin{subarray}{c}I\subset 
\{1,\ldots,n\}\\ |I| =k\end{subarray}}
\Vol_{\ZZ}\left(
\dsum_{i\in I} \Delta_i \right)
\end{equation}
where $\Vol_{\ZZ}(\ \cdot\ )
= n! \Vol (\ \cdot\ ) \in \ZZ$ is 
the normalized $n$-dimensional volume 
with respect to the lattice $\ZZ^n 
\subset \RR^n$.
\end{definition}

\begin{theorem}\label{BKK} 
(Bernstein-Khovanskii-Kushnirenko's 
theorem \cite{Khovanskii})\label{thm:14}
Let $g_1, g_2, \ldots , g_p$ be 
Laurent polynomials on $T=(\CC^*)^n$. 
Assume that the subvariety $Z
=\{ x\in T=(\CC^*)^n \ | \ g_1(x)=g_2(x)= 
\cdots =g_p(x)=0 \}$ of $T=(\CC^*)^n$ is a 
non-degenerate complete intersection. 
Set $\Delta_i=NP(g_i)$ $(i=1,\ldots, p)$. Then we have
\begin{equation}
\chi(Z)=(-1)^{n-p} 
\dsum_{\begin{subarray}{c} 
m_1,\ldots,m_p \geq 1\\ m_1+\cdots +m_p=n 
\end{subarray}}\Vol_{\ZZ}(
\underbrace{\Delta_1,\ldots,\Delta_1}_{\text{
$m_1$-times}},\ldots, 
\underbrace{\Delta_p,
\ldots,\Delta_p}_{\text{$m_p$-times}}),
\end{equation}
where $\Vol_{\ZZ}(\underbrace{\Delta_1,
\ldots,\Delta_1}_{\text{$m_1$-times}},
\ldots,\underbrace{\Delta_p,\ldots,
\Delta_p}_{\text{$m_p$-times}})\in \ZZ$ 
is the normalized $n$-dimensional mixed volume 
with respect to the lattice $\ZZ^n 
\subset \RR^n$.
\end{theorem}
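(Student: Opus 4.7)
The plan is to realize $Z$ inside a smooth projective toric compactification of $T=(\CC^*)^n$ and translate the computation of $\chi(Z)$ into toric intersection theory, where normalized mixed volumes appear through the standard dictionary between intersection numbers of torus-invariant nef line bundles and mixed volumes of lattice polytopes. First, I would choose a smooth projective toric variety $X_\Sigma$ whose fan $\Sigma$ in $\RR^n$ refines the inner normal fan of the Minkowski sum $\Delta = \Delta_1 + \cdots + \Delta_p$. Each lattice polytope $\Delta_i$ then determines a nef line bundle $L_{\Delta_i}$ on $X_\Sigma$ whose restriction to each torus orbit is associated to the corresponding face of $\Delta_i$, and the Laurent polynomial $g_i$ extends to a global section $\tilde g_i$ of $L_{\Delta_i}$. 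The non-degeneracy hypothesis of Definition \ref{non-deg} is exactly the statement that the closure $\bar Z = \{\tilde g_1 = \cdots = \tilde g_p = 0\}$ is a smooth subvariety of $X_\Sigma$ of codimension $p$ meeting every torus orbit transversely.

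Next I would compute $\chi(\bar Z)$ from Chern classes: since the normal bundle of $\bar Z$ in $X_\Sigma$ is $\bigoplus_{i=1}^p L_{\Delta_i}|_{\bar Z}$, the adjunction exact sequence together with Gauss--Bonnet expresses $\chi(\bar Z)$ as a polynomial in the cohomology classes $c_1(L_{\Delta_i})$ and the Chern classes of $TX_\Sigma$, paired with the fundamental class $[X_\Sigma]$. By the toric dictionary, every top-degree monomial $c_1(L_{\Delta_{i_1}}) \cdots c_1(L_{\Delta_{i_n}})$ on $X_\Sigma$ evaluates to the normalized mixed volume $\Vol_{\ZZ}(\Delta_{i_1}, \ldots, \Delta_{i_n})$, so $\chi(\bar Z)$ becomes an explicit combination of mixed volumes of Minkowski subsums of the $\Delta_i$'s and auxiliary toric Chern numbers.

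Finally I would pass from $\chi(\bar Z)$ to $\chi(Z)$ by peeling off the toric boundary. Using the orbit--cone correspondence, one writes $\chi(Z) = \chi(\bar Z) - \sum_{\tau \neq \{0\}} \chi(\bar Z \cap O_\tau)$, where $O_\tau$ is the orbit of a nonzero cone $\tau \in \Sigma$. For each such $\tau$ the intersection $\bar Z \cap O_\tau$ is itself a non-degenerate complete intersection in the smaller torus $O_\tau$ cut out by the face restrictions $g_i^{\gamma_i^\tau}$ on the corresponding faces $\gamma_i^\tau \prec \Delta_i$, so an induction on $n$ together with the same Chern-class / mixed-volume dictionary applies to each boundary contribution. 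The remaining task is combinatorial: one must match the resulting alternating sum of Chern-class polynomials on $\bar Z$ and its boundary strata, term by term, with the polarization formula of Definition \ref{def-1}. I expect the contributions involving nontrivial Chern classes of the tangent bundles of $X_\Sigma$ and of the orbit closures to cancel, leaving only the pure mixed-volume monomials $\Vol_{\ZZ}(\underbrace{\Delta_1,\ldots,\Delta_1}_{m_1},\ldots,\underbrace{\Delta_p,\ldots,\Delta_p}_{m_p})$ with $m_1+\cdots+m_p=n$ and overall sign $(-1)^{n-p}$. This bookkeeping, i.e.\ verifying the cancellations and sign by comparing with the polarization identity, is where I expect the main difficulty to lie; the ingredients of toric compactification, transversality with torus orbits, and Chern-class computation are standard once the setup is in place.
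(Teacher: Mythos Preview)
The paper does not prove this theorem: it is stated as a citation of Khovanskii's classical result (\cite{Khovanskii}) and used as a black box in the proof of Theorem~\ref{the-2}, with no argument given. So there is no ``paper's own proof'' to compare your proposal against.

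That said, your sketch is a reasonable and standard route to the BKK formula, essentially the one in Khovanskii's original paper and in Fulton's toric book: compactify $T$ in a smooth projective toric variety $X_\Sigma$ adapted to $\Delta=\Delta_1+\cdots+\Delta_p$, note that non-degeneracy in the sense of Definition~\ref{non-deg} forces the closure $\bar Z\subset X_\Sigma$ to be smooth and stratified-transverse to the toric boundary, compute $\chi(\bar Z)$ by Gauss--Bonnet and adjunction, translate intersection numbers of the $c_1(L_{\Delta_i})$ into mixed volumes, and then strip off the boundary by induction on the orbit stratification. Your honest acknowledgement that the final cancellation of the tangent-bundle Chern contributions is ``where the main difficulty lies'' is accurate: that step is not automatic and is usually handled either by writing $c(TX_\Sigma)=\prod_\rho(1+[D_\rho])$ and doing an explicit M\"obius-type inversion over the face poset of $\Sigma$, or (more efficiently) by invoking the Ehlers--Danilov formula $c_*(\mathbf{1}_T)=[\{0\}]$ for the Chern--Schwartz--MacPherson class of the open torus, which kills all boundary terms in one stroke. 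Without one of these devices your outline remains a plan rather than a proof, but the plan itself is sound.
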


Now, let $\Sigma_f$, 
$\Sigma_P$ and $\Sigma_Q$ 
be the dual fans of $\Gamma_{+}(f)$, 
$\Gamma_{+}(P)$ and $\Gamma_{+}(Q)$ 
in $\RR_+^n$ respectively. Then the 
dual fan $\Sigma_f$ of 
the Minkowski sum 
$\Gamma_+(f)= \Gamma_+(P) + \Gamma_+(Q)$ 
is the coarsest common 
subdivision of $\Sigma_P$ and $\Sigma_Q$. 
This implies that for each face 
$\gamma \prec \Gamma_{+}(f)$ 
we have the corresponding faces 
\begin{equation}
\gamma(P) \prec \Gamma_+(P), \qquad 
\gamma(Q) \prec \Gamma_+(Q)
\end{equation}
such that 
\begin{equation}
\gamma = \gamma(P) + \gamma(Q). 
\end{equation}

\begin{definition}\label{def-3}
We say that the meromorphic 
function $f(x)= \frac{P(x)}{Q(x)}$ is 
non-degenerate at the origin $0 \in X= \CC^n$ if for any 
compact face $\gamma$ of $\Gamma_{+}(f)$ the complex 
hypersurfaces $\{x \in T=(\CC^*)^n\ |\ 
P^{\gamma(P)}(x)=0\}$ and $\{x \in T=(\CC^*)^n\ |\ 
Q^{\gamma(Q)}(x)=0\}$ are smooth and reduced 
and intersect transversally in $T=(\CC^*)^n$. 
\end{definition}

For a subset 
$S \subset \{ 1,2, \ldots, n \}$ we set 
\begin{equation}
\RR^S = \{ v=(v_1, v_2, \ldots, v_n) \in \RR^n  \ | \ 
v_i=0 \ (i \notin S) \} \simeq \RR^{|S|} 
\end{equation}
and 
\begin{equation}
\Gamma_+(f)^S= \Gamma_+(f) \cap \RR^S. 
\end{equation}
Similarly, we define 
$\Gamma_+(P)^S, \Gamma_+(Q)^S \subset \RR^S$ 
so that we have 
\begin{equation}
\Gamma_+(f)^S= \Gamma_+(P)^S + \Gamma_+(Q)^S. 
\end{equation}
Let $\gamma_1^S, \gamma_2^S, \ldots, \gamma_{n(S)}^S$ 
be the compact facets of $\Gamma_+(f)^S$ and for 
each $\gamma_i^S$ ($1 \leq i \leq n(S)$) consider 
the corresponding faces 
\begin{equation}
\gamma_i^S(P) \prec \Gamma_+(P)^S, \qquad 
\gamma_i^S(Q) \prec \Gamma_+(Q)^S
\end{equation}
such that 
\begin{equation}
\gamma_i^S= \gamma_i^S(P) + \gamma_i^S(Q). 
\end{equation}
By using the primitive inner conormal vector 
$\alpha_i^S \in \ZZ_+^S \setminus \{ 0 \}$ 
of the facet $\gamma_i^S \prec \Gamma_+(f)^S$ 
we define the lattice distance 
$d_i^S(P)>0$ (resp. $d_i^S(Q)>0$) of 
$\gamma_i^S(P)$ (resp. $\gamma_i^S(Q)$) from 
the origin $0 \in \RR^S$ to be the (unique) 
value of $\alpha_i^S$ on 
$\gamma_i^S(P)$ (resp. $\gamma_i^S(Q)$) and set 
\begin{equation}
d_i^S=d_i^S(P)-d_i^S(Q) \in \ZZ. 
\end{equation}

Finally by using 
the normalized ($|S|-1$)-dimensional volume 
$\Vol_{\ZZ}(\ \cdot\ )$ we set 
\begin{equation}
v_i^S = 
\dsum_{k=0}^{|S|-1} 
\Vol_{\ZZ}(
\underbrace{\gamma_i^S(P),
\ldots,\gamma_i^S(P)}_{\text{
$k$-times}},
\underbrace{\gamma_i^S(Q),
\ldots,\gamma_i^S(Q)}_{\text{($|S|-1-k$)-times}})
\in \ZZ.
\end{equation}
Then we have the following celebrated theorem of 
Gusein-Zade, Luengo and Melle-Hern\'andez 
\cite{G-L-M}. 

\begin{theorem}\label{the-2} 
(Gusein-Zade, Luengo and Melle-Hern\'andez 
\cite{G-L-M}) 
Assume that the meromorphic 
function $f(x)= \frac{P(x)}{Q(x)}$ is 
non-degenerate at the origin $0 \in X= \CC^n$. 
Then we have 
\begin{equation}
\zeta_{f,0}(t) = \prod_{S \not= \emptyset} \Bigl\{ 
\prod_{i: d_i^S >0} 
( 1- t^{d_i^S})^{(-1)^{|S|-1} v_i^S} \Bigr\}. 
\end{equation}
\end{theorem}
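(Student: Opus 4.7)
The plan is to derive the formula from our A'Campo-type theorem (Theorem~\ref{the-5}) applied to a toric resolution of $f$ adapted to the Newton data, with the Bernstein--Khovanskii--Kushnirenko theorem (Theorem~\ref{BKK}) evaluating the resulting Euler characteristics.

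First I would take a smooth subdivision $\Sigma$ of the common refinement $\Sigma_f$ of the dual fans of $\Gamma_+(P)$ and $\Gamma_+(Q)$ in $\RR_+^n$, obtaining a toric modification $\pi\colon Y=X_{\Sigma}\to X=\CC^n$. The non-degeneracy of $f$ ensures that the union of the toric divisors $\{D_a\}_{a\in\Sigma^{(1)}}$ and the strict transforms $\widetilde{P^{-1}(0)}$, $\widetilde{Q^{-1}(0)}$ is a normal crossing divisor satisfying the hypotheses of Theorem~\ref{the-5}, and that for each primitive ray $a\in\Sigma^{(1)}$ the multiplicity along $D_a$ is
\begin{equation*}
m_a=\min_{v\in\Gamma_+(P)}\langle a,v\rangle-\min_{v\in\Gamma_+(Q)}\langle a,v\rangle=d_a(P)-d_a(Q),
\end{equation*}
read off from the supporting faces $\gamma_a(P)\prec\Gamma_+(P)$ and $\gamma_a(Q)\prec\Gamma_+(Q)$.

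Next, Theorem~\ref{the-5} expresses $\zeta_{f,0}(t)$ as the product over those interior rays $a$ with full support $S(a)=\{1,\ldots,n\}$ and $m_a>0$ of the factors $(1-t^{m_a})^{\chi(D_a^{\circ})}$. Here $D_a^{\circ}$ is the torus orbit $O_{\mathrm{cone}(a)}\simeq(\CC^{*})^{n-1}$ with the strict transforms of $P^{-1}(0)$ and $Q^{-1}(0)$ removed. Since $\chi((\CC^{*})^{n-1})=0$, inclusion--exclusion gives
\begin{equation*}
\chi(D_a^{\circ})=-\chi(Z_P^{a})-\chi(Z_Q^{a})+\chi(Z_P^{a}\cap Z_Q^{a}),
\end{equation*}
where $Z_P^{a}$ and $Z_Q^{a}$ are the non-degenerate subvarieties of $(\CC^{*})^{n-1}$ cut out by $P^{\gamma_a(P)}$ and $Q^{\gamma_a(Q)}$. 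Theorem~\ref{BKK} then evaluates each of these as an alternating sum of mixed volumes of the faces $\gamma_a(P)$ and $\gamma_a(Q)$.

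The final step is the combinatorial rearrangement organized by the unique non-empty subset $S\subset\{1,\ldots,n\}$ such that the affine hull of $\gamma_a(f)=\gamma_a(P)+\gamma_a(Q)$ is parallel to $\RR^{S}$ and meets the interior of $\RR^{S}$: the BKK contribution is nonzero only when this affine hull is of full dimension $|S|-1$, in which case the hypersurfaces effectively live in the reduced torus $(\CC^{*})^{|S|-1}$ while the complementary toric factor contributes $\chi((\CC^{*})^{n-|S|})=0$ unless $|S|=n$. Grouping the interior rays $a$ whose face $\gamma_a(f)$ is dual to a fixed compact facet $\gamma_i^{S}$ of $\Gamma_+(f)^{S}$ and using the multi-additivity of mixed volumes to collapse the smooth refinement back to $\Sigma_f$, one recovers exactly the factor $(1-t^{d_i^{S}})^{(-1)^{|S|-1}v_i^{S}}$; the sign $(-1)^{|S|-1}$ arises from BKK applied in the torus $(\CC^{*})^{|S|-1}$. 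The principal obstacle is this combinatorial bookkeeping: one must verify the bijection between compact facets of $\Gamma_+(f)^{S}$ as $S$ varies and the effective BKK contributions from interior rays of $\Sigma_f$, and check that lower-dimensional or non-compact contributions vanish, independently of the choice of $\Sigma$.
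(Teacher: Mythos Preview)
Your overall strategy—one toric resolution of $\CC^n$, then Theorem~\ref{the-5} and BKK—is the classical Varchenko/Oka route and can be made to work, but the paper follows a different organization that sidesteps the rearrangement you flag as the principal obstacle. The paper first decomposes $\CC^n=\bigsqcup_S T_S$ into coordinate tori and uses multiplicativity of zeta functions to write $\zeta_{f,0}(t)=\prod_{S\neq\emptyset}\zeta(j_{S*}\CC_{T_S})_{f,0}(t)$; then, for each fixed $S$, it applies Proposition~\ref{PDI} to pass to a toric resolution of $\overline{T_S}\simeq\CC^{|S|}$ adapted to $\Gamma_+(f)^S$ and invokes Proposition~\ref{ProI} and Theorem~\ref{BKK}, so that the factor indexed by $S$ drops out directly and no cross-$S$ bookkeeping is needed.

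There is also a concrete error in your sketch that would break the single-resolution approach as written. Your identification of $D_a^{\circ}$ with the torus orbit $O_a\simeq(\CC^*)^{n-1}$ minus the two strict transforms is wrong: in Theorem~\ref{the-5} one removes from $D_a$ only the other components of $D=\pi^{-1}(P^{-1}(0)\cup Q^{-1}(0))$, and the boundary toric divisors $D_b$ (for rays $b$ on $\partial\RR_+^n$) are typically \emph{not} in $D$ (never, for instance, when $P,Q$ are convenient). Hence $D_a^{\circ}$ properly contains $O_a\setminus(\widetilde{P^{-1}(0)}\cup\widetilde{Q^{-1}(0)})$, and it is exactly these extra boundary strata of $D_a$ that supply the $|S|<n$ terms. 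Your claim that the complementary factor $\chi((\CC^*)^{n-|S|})$ kills everything with $|S|<n$ would collapse the whole product to its $S=\{1,\ldots,n\}$ factor; already for the holomorphic $f=x_1+x_2$ on $\CC^2$ this yields $(1-t)^{-1}$ instead of the correct $\zeta_{f,0}(t)=1-t$. A correct single-resolution argument must stratify each $D_a$ by its own toric boundary and match those strata with the various $\Gamma_+(f)^S$—precisely the step the paper's stratify-first method avoids.
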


Decomposing $X= \CC^n$ into some tori $( \CC^*)^k$ 
as in the proof of \cite[Theorem 3.12]{M-T-1}, 
we can reprove this theorem by Propositions \ref{PDI} 
and \ref{ProI} and Theorem \ref{BKK} 
(see also Varchenko \cite{Varchenko} and Oka \cite{Oka}).  
By Theorems \ref{the-2} and \ref{the-3} 
we obtain the following result. 

\begin{corollary}\label{cor-1} 
Assume that $f(x)= \frac{P(x)}{Q(x)}$ is 
non-degenerate at the origin $0 \in X= \CC^n$ 
and $P(x), Q(x)$ are convenient. 
Then in the notations of Theorem \ref{the-2}, 
for any $\lambda \not=1$ 
the multiplicity of the eigenvalue 
$\lambda$ in $\Phi_{n-1, 0}$ is 
equal to that of the factor $t- \lambda$ 
in the rational function 
\begin{equation}
\prod_{S \not= \emptyset} \Bigl\{ 
\prod_{i: d_i^S >0} 
(t^{d_i^S}-1)^{(-1)^{n-|S|} v_i^S} \Bigr\}
\in \CC (t). 
\end{equation}
\end{corollary}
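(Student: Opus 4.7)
The plan is to derive the multiplicity formula directly from Theorem~\ref{the-2} and Theorem~\ref{the-3}. The former supplies an explicit product expression for $\zeta_{f,0}(t)$, while the latter concentrates the $\lambda$-isotypic part of the Milnor cohomology (for $\lambda\neq 1$) in the single degree $j=n-1$; together they turn the eigenvalue-counting question into a bookkeeping of the order of vanishing of $\zeta_{f,0}(t)$ at $t=\lambda$.

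First I would verify that the hypothesis of Theorem~\ref{the-3}, namely that $P^{-1}(0)$ and $Q^{-1}(0)$ have at worst an isolated singularity at $0\in\CC^n$ and intersect transversally on $X\setminus\{0\}$, is implied by the non-degeneracy of $f$ together with the convenience of $P$ and $Q$. Convenience ensures that $\Gamma_+(P)^S$ and $\Gamma_+(Q)^S$ are non-empty for every non-empty $S\subset\{1,\ldots,n\}$, so that each coordinate stratum actually supports the relevant Newton polyhedra. Applying the non-degeneracy condition face by face to the compact faces of $\Gamma_+(f)=\Gamma_+(P)+\Gamma_+(Q)$ inside each $\RR^S$, I would deduce the smoothness of $P^{-1}(0)\cap T_S$ and $Q^{-1}(0)\cap T_S$ and the transversality of their intersection in every torus orbit $T_S$ with $S\neq\emptyset$; gluing over $S$ then recovers precisely the geometric input needed for Theorem~\ref{the-3}.

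Once Theorem~\ref{the-3} applies, the defining product $\zeta_{f,0}(t)=\prod_j\det(\id-t\Phi_{j,0})^{(-1)^j}$ collapses, on the $\lambda$-isotypic summand, to the single factor contributed by the restriction of $\Phi_{n-1,0}$ to $H^{n-1}(F_0;\CC)_\lambda$. If $m_\lambda$ denotes the multiplicity of $\lambda$ as an eigenvalue of $\Phi_{n-1,0}$, the linear factor $(1-\lambda t)$ therefore appears in $\zeta_{f,0}(t)$ with signed multiplicity $(-1)^{n-1}m_\lambda$. Since $\Phi_{n-1,0}$ is quasi-unipotent, every such $\lambda$ is a root of unity, and the symmetry $\zeta\mapsto\zeta^{-1}$ on $d$-th roots of unity makes the orders of vanishing of $t^d-1$ at $t=\lambda$ and at $t=\lambda^{-1}$ equal. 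Combining this with the sign conversions $(1-t^d)=-(t^d-1)$ and $(-1)^{n-1}(-1)^{|S|-1}=(-1)^{n-|S|}$, the formula of Theorem~\ref{the-2} shows that $m_\lambda$ coincides with the multiplicity of the factor $t-\lambda$ in the displayed rational function.

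The only non-formal step is the first one, where the combinatorial non-degeneracy and convenience have to be translated into the singularity-theoretic hypotheses of Theorem~\ref{the-3}; once this is in place, the remainder of the argument is a purely mechanical exponent and sign chase, so I expect the transversality/isolated-singularity verification to be the only substantive point.
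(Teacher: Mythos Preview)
Your proposal is correct and follows exactly the route the paper intends: the corollary is stated immediately after Theorem~\ref{the-2} as a direct consequence of Theorems~\ref{the-2} and~\ref{the-3}, with no further argument given. You have correctly isolated the one point the paper leaves implicit, namely that convenience of $P,Q$ together with the non-degeneracy of $f$ at the origin force $P^{-1}(0)$ and $Q^{-1}(0)$ to have (at worst) an isolated singularity at $0$ and to meet transversally on $X\setminus\{0\}$, so that Theorem~\ref{the-3} applies; the remaining sign and exponent manipulation is exactly as you describe.
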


\section{Mixed Hodge structures of Milnor 
fibers of rational functions}\label{sec:s4}

In this section, by using the meromorphic nearby cycle functors 
introduced in Section \ref{sec:s2} we 
study the mixed Hodge structures of Milnor 
fibers of rational functions and apply 
them to the Jordan normal forms of 
their monodromies. 
Let us consider a rational function $f(x)= \frac{P(x)}{Q(x)}$ 
on $X= \CC^n$ 
such that $0 \in I(f)=P^{-1}(0) \cap Q^{-1}(0)$. 
In order to obtain also a formula for 
the Jordan normal form of its monodromy 
$\Phi_{n-1, 0}$ as in 
Matsui-Takeuchi \cite{M-T-4}, Stapledon \cite{Stapledon} and 
Saito \cite{Saito}, we need the following condition. 

\begin{definition}\label{def-2}
We say that the rational function 
$f(x)= \frac{P(x)}{Q(x)}$ is polynomial-like 
if there exists a 
resolution $\pi_0: \widetilde{X} \rightarrow X= \CC^n$
of singularities of $P^{-1}(0)$ and 
$Q^{-1}(0)$ which induces an isomorphism 
$\widetilde{X} \setminus \pi_0^{-1}( \{ 0 \} ) 
\simto X \setminus \{ 0 \}$ such that for 
any irreducible component $D_i$ of the 
(exceptional) normal crossing divisor 
$D= \pi_0^{-1}( \{ 0 \} )$ we have the condition 
\begin{equation}
{\rm ord}_{D_i} (P \circ \pi_0) > 
{\rm ord}_{D_i} (Q \circ \pi_0). 
\end{equation}
\end{definition}

\begin{proposition}\label{pro-1} 
Assume that the rational function 
$f(x)= \frac{P(x)}{Q(x)}$ is polynomial-like 
and satisfies the 
conditions in Theorem \ref{the-3}. 
Then for any $\lambda \not=1$ the natural 
morphism 
\begin{equation}
\psi_{f, \lambda}^{\merc}
( \CC_X [n] )[-1] \longrightarrow 
\psi_{f, \lambda}^{\mer}
( \CC_X [n] )[-1] 
\end{equation}
is an isomorphism. Morevoer, the supports 
of the both sides of it 
are contained in 
the origin $\{ 0 \} \subset X= \CC^n$. 
\end{proposition}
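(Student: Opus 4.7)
The plan is to reduce the statement to a stalkwise local computation on a good resolution and then invoke Proposition~\ref{ProII}. First I would take the polynomial-like resolution $\pi_0 \colon \widetilde{X} \to X$ provided by Definition~\ref{def-2}, and then further blow up $\widetilde{X}$ along smooth centers contained in $\pi_0^{-1}(0)$ so as to separate the proper transforms of $P^{-1}(0)$ and $Q^{-1}(0)$ along the exceptional locus and to eliminate the remaining indeterminacy of $f \circ \pi_0$. Let $\pi \colon Y \to X$ denote the resulting proper morphism; by construction, $\pi$ restricts to an isomorphism $Y \setminus \pi^{-1}(P^{-1}(0) \cup Q^{-1}(0)) \simto X \setminus (P^{-1}(0) \cup Q^{-1}(0))$. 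Applying Proposition~\ref{PDI} and its stated analogue for $\psi^{\merc}$, I obtain isomorphisms
\[
\psi_{f, \lambda}^{\mer}(\CC_X) \simeq R\pi_* \psi_{f \circ \pi, \lambda}^{\mer}(\CC_Y), \qquad \psi_{f, \lambda}^{\merc}(\CC_X) \simeq R\pi_* \psi_{f \circ \pi, \lambda}^{\merc}(\CC_Y),
\]
compatible with the natural transformation $\psi^{\merc} \to \psi^{\mer}$. The problem thus reduces to proving, for every $y \in \pi^{-1}(0)$ and every $\lambda \neq 1$, that the induced stalk morphism $\psi_{f \circ \pi, \lambda}^{\merc}(\CC_Y)_y \to \psi_{f \circ \pi, \lambda}^{\mer}(\CC_Y)_y$ is an isomorphism.

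At each $y \in \pi^{-1}(0)$ I would choose local coordinates $(x_1, \ldots, x_n)$ in which $D := \pi^{-1}(P^{-1}(0) \cup Q^{-1}(0))$ is a union of coordinate hyperplanes, so that $P \circ \pi$ and $Q \circ \pi$ are monomials up to local units. The polynomial-like condition combined with the separation of proper transforms ensures that the pole locus of $f \circ \pi$ on $Y$ is exactly the proper transform of $Q^{-1}(0)$: on every exceptional divisor coming from $\pi_0$ the order of $P$ strictly exceeds that of $Q$, while on the new exceptional divisors created by the further blow-ups these orders coincide, so $f \circ \pi$ is a local unit there. If $y$ does not lie on the proper transform of $Q^{-1}(0)$, then $f \circ \pi$ is holomorphic near $y$, and by Remark~\ref{rem-1} together with its analogue for $\psi^{\merc}$ the question becomes whether $\psi_{f \circ \pi, \lambda}(j_! \CC_{Y \setminus D})_y \to \psi_{f \circ \pi, \lambda}(Rj_* \CC_{Y \setminus D})_y$ is an isomorphism for $\lambda \neq 1$. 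If $y$ does lie on the proper transform of $Q^{-1}(0)$, the local form of $(P \circ \pi, Q \circ \pi)$ is of the type treated in Proposition~\ref{ProII}. In both cases I would use the distinguished triangle
\[
j_! \CC_{Y \setminus D} \longrightarrow Rj_* \CC_{Y \setminus D} \longrightarrow (Rj_* \CC_{Y \setminus D})_D \overset{+1}{\longrightarrow}
\]
and argue, exactly as in the proof of Proposition~\ref{ProII}, that $\psi_{f \circ \pi, \lambda}((Rj_* \CC_{Y \setminus D})_D)_y \simeq 0$ for $\lambda \neq 1$, relying on the primitive decomposition of the graded pieces of the weight filtration of the nearby cycle sheaf and on the observation that one cannot have $\lambda^m = \lambda^{m+1} = 1$ with $\lambda \neq 1$.

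The main obstacle will be the case where $y$ lies on the proper transform of $Q^{-1}(0)$: after the further blow-ups, the local monomial form of $Q \circ \pi$ may involve additional exceptional coordinates shared with $P \circ \pi$, so the setting is slightly more general than the strict hypothesis $Q = x_n$ of Proposition~\ref{ProII}. One must therefore verify that the consecutive-order argument in the proof of Proposition~\ref{ProII} extends to this monomial setting. The polynomial-like condition is used precisely here: it guarantees that the pole divisor of $f \circ \pi$ has exactly one component through $y$, so that the order of $f \circ \pi$ changes by $\pm 1$ when moving between this component and any adjacent one, and the binary ``consecutive-order'' vanishing argument remains valid.
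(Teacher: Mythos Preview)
Your approach is essentially the paper's own: pass to the polynomial-like resolution, use Proposition~\ref{PDI} together with its compact-support analogue to reduce to the comparison $\psi_{f\circ\pi_0,\lambda}^{\merc}(\CC_{\widetilde X})\to\psi_{f\circ\pi_0,\lambda}^{\mer}(\CC_{\widetilde X})$, and then check this stalkwise via Proposition~\ref{ProII}. The paper does exactly this (without your intermediate layer of further blow-ups, which are already built into the proof of Proposition~\ref{ProII}).

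One inaccuracy worth flagging: your claim that ``on the new exceptional divisors created by the further blow-ups these orders coincide, so $f\circ\pi$ is a local unit there'' is not correct. When you blow up along the intersection of a component of order $m_i>0$ with the pole component $\widetilde{Q^{-1}(0)}$ (order $-1$), the resulting chain of exceptional divisors carries orders $m_i-1,m_i-2,\dots,1,0$, not all $0$. What survives, and is all you actually need, is that these orders are \emph{nonnegative}, so the pole divisor of $f\circ\pi$ remains exactly the proper transform of $\widetilde{Q^{-1}(0)}$. Likewise, at points of the exceptional locus the pair $(P\circ\pi,Q\circ\pi)$ is never literally of the shape $(x_1^{m_1}\cdots x_k^{m_k},\,x_n)$ required by Proposition~\ref{ProII}, because $(Q\circ\pi)^{-1}(0)$ contains every exceptional component; both you (in your final paragraph) and the paper are really invoking the \emph{proof} of Proposition~\ref{ProII}, and your consecutive-order vanishing sketch is the correct mechanism.
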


\begin{proof} 
Let $\pi_0: \widetilde{X} \rightarrow X= \CC^n$ 
be a resolution of singularities of $P^{-1}(0)$ and 
$Q^{-1}(0)$ which induces an isomorphism 
$\widetilde{X} \setminus \pi_0^{-1}( \{ 0 \} ) 
\simto X \setminus \{ 0 \}$ such that for 
any irreducible component $D_i$ of the 
(exceptional) normal crossing divisor 
$D= \pi_0^{-1}( \{ 0 \} )$ we have the condition 
\begin{equation}
{\rm ord}_{D_i} (P \circ \pi_0) > 
{\rm ord}_{D_i} (Q \circ \pi_0). 
\end{equation}
Then by Proposition \ref{ProII} 
we can show that for any $\lambda \not=1$ the natural 
morphism 
\begin{equation}\label{eemor}
\psi_{f \circ \pi_0, \lambda}^{\merc}
( \CC_{\widetilde{X}} [n] )[-1] \longrightarrow 
\psi_{f \circ \pi_0, \lambda}^{\mer}
( \CC_{\widetilde{X}} [n] )[-1] 
\end{equation}
is an isomorphism and the supports 
of the both sides of it 
are contained in $\pi_0^{-1}( \{ 0 \} )$. 
Indeed, let $\widetilde{P^{-1}(0)}$ 
(resp. $\widetilde{Q^{-1}(0)}$) 
be the (smooth) proper transform of 
$P^{-1}(0)$ (resp. $Q^{-1}(0)$) in $\widetilde{X}$. 
Then the divisor 
$\widetilde{P^{-1}(0)} \cup \widetilde{Q^{-1}(0)} 
\cup D$ in $\widetilde{X}$ is normal 
crossing, and the meromorphic function 
$f \circ \pi_0$ on $\widetilde{X}$ takes 
the value $0$ on 
$( \widetilde{P^{-1}(0)} \cup D) 
\setminus  \widetilde{Q^{-1}(0)}$ 
and has a pole of order $1$ along 
$\widetilde{Q^{-1}(0)}$. First, by 
Remark \ref{rem-1} and its analogue for 
meromorphic nearby cycle functor with 
compact support, on 
$( \widetilde{P^{-1}(0)} \cup D) 
\setminus  \widetilde{Q^{-1}(0)}$ 
we have an isomorphism 
\begin{equation}
\psi_{f \circ \pi_0}^{\merc}
( \CC_{\widetilde{X}} [n] )[-1] \simto 
\psi_{f \circ \pi_0}^{\mer}
( \CC_{\widetilde{X}} [n] )[-1].
\end{equation}
Next, with the help of Lemma \ref{lem-1} (ii) 
and its analogue for 
meromorphic nearby cycle functor with 
compact support, by reducing the problem 
to the situation in Proposition \ref{ProII}, 
we can check that for $\lambda \not=1$ the stalk 
of the morphism \eqref{eemor} at each 
point of $( \widetilde{P^{-1}(0)} \cup D) 
\cap  \widetilde{Q^{-1}(0)}$ 
is an isomorphism. 
Moreover, since 
$\widetilde{P^{-1}(0)} \cup \widetilde{Q^{-1}(0)}$ 
is normal crossing in 
$\widetilde{X} \setminus \pi_0^{-1}( \{ 0 \} ) 
\simeq X \setminus \{ 0 \}$, 
by the proof of Theorem \ref{the-3} 
if $\lambda \not=1$ we have 
\begin{equation}
\psi_{f \circ \pi_0, \lambda}^{\merc}
( \CC_{\widetilde{X}} [n] )[-1] \simto  
\psi_{f \circ \pi_0, \lambda}^{\mer}
( \CC_{\widetilde{X}} [n] )[-1] 
\simeq 0
\end{equation}
on $\widetilde{X} \setminus \pi_0^{-1}( \{ 0 \} ) 
\simeq X \setminus \{ 0 \}$. 
Now by Proposition \ref{PDI} 
and its analogue for meromorphic 
nearby cycle functors with compact support, all 
the assertions immediately follow. 
\end{proof} 

Now we shall introduce natural mixed Hodge structures on 
the cohomology groups $H^{j}(F_{0};\CC)_{\lambda}$ 
($\lambda\in \CC$) of the Milnor fiber $F_0$. 
For a variety $Z$ over $\CC$ we 
denote by ${\rm MHM}_{Z}$ the abelian category of 
mixed Hodge modules on $Z$ (see e.g. 
\cite[Section 8.3]{H-T-T} etc.). 
First we regard $\psi_{f,\lambda}^{\mathrm{mero}}
(\CC_{X}[n])[-1]$ (resp. 
$\psi_{f,\lambda}^{\mathrm{mero,c}}(\CC_{X}[n])[-1]$) 
as the underlying perverse sheaf of the mixed 
Hodge module $\psi_{t,\lambda}^{H}({i_{f}}_{*}
(\CC^{H}_{X}[n]|_{X\setminus Q^{-1}(0)}))$ (resp. 
$\psi_{t,\lambda}^{H}({i_{f}}_{!}(
\CC^{H}_{X}[n]|_{X\setminus Q^{-1}(0)}))$) 
$\in {\rm MHM}_{X}$, 
where $\psi_{t,\lambda}^{H}$ (resp. ${i_{f}}_{*}$, 
${i_{f}}_{!}$) is a functor between the categories 
of mixed Hodge modules corresponding to the functor 
$\psi_{t,\lambda}[-1]$ (resp. $R {i_{f}}_{*}$, 
$R{i_{f}}_{!}$) and $\CC_{X}^{H}[n] \in 
{\rm MHM}_{X}$ is the 
mixed Hodge module whose underlying perverse sheaf 
is $\CC_{X}[n]$.
For the inclusion map $j_{0}\colon \{0\}\hookrightarrow X$, 
we consider the pullback 
\begin{equation}
j_{0}^*\psi_{t,\lambda}^{H}
({i_{f}}_{*}(\CC^{H}_{X}[n]|_{X\setminus Q^{-1}(0)})) 
\in \Db ( {\rm MHM}_{\{ 0 \}} ) 
\end{equation}
by $j_{0}$, whose underlying constructible sheaf is 
$j_{0}^{-1}(\psi_{f,\lambda}^{\mathrm{mero}}(\CC_{X}[n])[-1])$. 
Since the $(j-n+1)$-th cohomology group of $j_{0}^{-1}
(\psi_{f,\lambda}^{\mathrm{mero}}(\CC_{X}[n])[-1])$ 
is $H^{j}(F_{0};\CC)_{\lambda}$, we thus obtain a 
natural mixed Hodge structure of $H^{j}(F_{0}; 
\CC)_{\lambda}$. 
In the following, we focus our attention on its weight filtration 
$W_{\bullet}H^{j}(F_{x}; \CC)_{\lambda}$.
Recall that the weight filtration of the (middle 
dimensional) 
cohomology group of the Milnor fiber of an 
isolated hypersurface singular point is the 
monodromy weight filtration (for the definition, 
see e.g. \cite[Section A.2]{M-T-3} etc.) 
of the Milnor monodromy. 
We will show that if the rational function 
$f(x)= \frac{P(x)}{Q(x)}$ 
satisfies the conditions of Proposition \ref{pro-1} 
the cohomology groups of the 
Milnor fiber $F_0$ of $f$ also have a 
similar property. By Proposition \ref{pro-1}, 
we thus obtain the same nice property also 
for the corresponding stalk of 
the perverse sheaf $\psi_{f, \lambda}^{\merc}
( \CC_X [n] )[-1]$ (see Theorem \ref{thm:7-6} 
for its description by a motivic Milnor fiber).  
First, we need the following lemma.

\begin{lemma}\label{weifillem}
Let $g$ be a holomorphic function on a 
complex manifold $Z$.
Moreover, let $M$, $M'$ be mixed Hodge 
modules on $Z$ and 
$M\to M'$ a morphism in the category of 
mixed Hodge modules.
Assume that $M$ (resp. $M'$) has weights 
$\leq l$ (resp. $\geq l$) for some integer 
$l$, i.e. we have $\GR^{W}_{k}M=0\ (k>l)$ (
resp. $\GR^{W}_{k}M'=0\ (k<l)$).
Then, for $\lambda\neq 1$ if the natural 
morphism $\psi^{H}_{g,\lambda}(M)\to 
\psi^{H}_{g,\lambda}(M')$ is an isomorphism,
the weight filtration of $\psi^{H}_{g,\lambda}(M')$ 
is the monodromy weight filtration centered at $l-1$.
\end{lemma}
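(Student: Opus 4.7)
The plan is to reduce to the case of a pure Hodge module of weight $l$, where the statement becomes the cornerstone of Saito's theory of nearby cycles. The two general ingredients from the theory of ${\rm MHM}_{Z}$ that make this work are: (a) morphisms of mixed Hodge modules are strictly compatible with the weight filtration, and (b) for $\lambda\neq 1$ the functor $\psi_{g,\lambda}^{H}$ is exact on ${\rm MHM}_{Z}$, since the familiar obstructions to exactness of $\psi^{H}_{g}$ occur only in the unipotent part $\lambda=1$.

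First I would form the image $N := \mathrm{Im}(M\to M')$ inside the abelian category ${\rm MHM}_{Z}$ and factor the given morphism as $M \twoheadrightarrow N \hookrightarrow M'$. By strictness with respect to $W_{\bullet}$, the object $N$ inherits the weight filtration both as a quotient of $M$ (giving weights $\leq l$) and as a subobject of $M'$ (giving weights $\geq l$). Hence $N$ is pure of weight $l$.

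Next I would apply the exact functor $\psi_{g,\lambda}^{H}$ to this factorization to obtain
\begin{equation}
\psi_{g,\lambda}^{H}(M) \twoheadrightarrow \psi_{g,\lambda}^{H}(N) \hookrightarrow \psi_{g,\lambda}^{H}(M'),
\end{equation}
whose composition is the given isomorphism. This forces both arrows to be isomorphisms, and in particular $\psi_{g,\lambda}^{H}(N) \simeq \psi_{g,\lambda}^{H}(M')$ as mixed Hodge modules; their weight filtrations therefore coincide.

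Finally I would invoke the cornerstone of Saito's theory: for a pure Hodge module $N$ of weight $l$ and $\lambda\neq 1$, the weight filtration of $\psi_{g,\lambda}^{H}(N)$ is exactly the monodromy weight filtration of the nilpotent endomorphism $\log T_{u}$, where $T_{u}$ is the unipotent part of the monodromy on the $\lambda$-eigenpart, centered at $l-1$. The real content of the lemma is carried by this theorem of Saito; the only new observation is the reduction to the pure case via strictness. The point requiring some care, rather than an obstacle, is matching shift conventions: here $\psi_{g,\lambda}^{H}$ denotes the functor on ${\rm MHM}_{Z}$ corresponding to $\psi_{g,\lambda}[-1]$, so the centering at $l-1$ rather than $l$ is precisely what one expects.
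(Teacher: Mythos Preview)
Your argument is correct and takes a genuinely different route from the paper's. You factor $M\to M'$ through its image $N$, observe that strictness of morphisms in ${\rm MHM}_{Z}$ forces $N$ to be pure of weight $l$, and then appeal directly to Saito's theorem in the pure case. The paper instead works with the filtration $L_{k}\psi_{g,\lambda}^{H}(M'):=\psi_{g,\lambda}^{H}(W_{k+1}M')$, uses that the weight filtration on $\psi_{g,\lambda}^{H}$ is the \emph{relative} monodromy filtration with respect to $L_{\bullet}$, and shows by a descending induction (using exactness of $\psi_{g,\lambda}^{H}$ and the weight bounds on $M,M'$) that $\GR^{L}_{k}\psi_{g,\lambda}^{H}(M')=0$ for all $k\neq l-1$; once $L_{\bullet}$ has a single jump, the relative monodromy filtration collapses to the ordinary one centered at $l-1$. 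Your approach is cleaner and more conceptual, trading the explicit filtration chase for the single structural fact of strictness; the paper's approach has the mild advantage of making the mechanism (the relative monodromy filtration) visible rather than hiding it inside the pure-case black box. One small correction to your justification: the functor $\psi_{g,\lambda}^{H}$ is exact on ${\rm MHM}_{Z}$ for \emph{every} $\lambda$, not just $\lambda\neq 1$; there is no obstruction in the unipotent part either, so your parenthetical remark about $\lambda=1$ is unnecessary (though harmless for the argument).
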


\begin{proof}
See Appendix \ref{app}. 
\end{proof}

In the situation of Proposition \ref{pro-1},
we set 
\begin{equation}
M:={i_{f}}_{!}(
\CC^{H}_{X}[n]|_{X\setminus Q^{-1}(0)}), 
\quad 
M':={i_{f}}_{*}(
\CC^{H}_{X}[n]|_{X\setminus Q^{-1}(0)}) 
\quad \in \Db ( {\rm MHM}_{X \times \CC} ). 
\end{equation}
Recall that 
$\CC^{H}_{X}[n]$ has a pure weight $n$.
Therefore, by the basic properties of the 
functor ${i_{f}}_{*}$ (resp. ${i_{f}}_{!}$) 
(see \cite[Section 8.3]{H-T-T} etc.) the mixed Hodge module $M$ 
(resp. $M'$) has weights $\leq n$ 
(resp. $\geq n$). 
Then we can apply Lemma~\ref{weifillem} 
to obtain the following theorem. 

\begin{theorem}\label{weifilmonofil}
In the situation of Proposition \ref{pro-1}, for any 
$\lambda\neq 1$ 
the weight filtration of $H^{n-1}(F_{0};
\CC)_{\lambda}$ is the monodromy weight 
filtration of the Milnor monodromy 
$\Phi_{n-1, 0}\colon H^{n-1}(F_{0};
\CC)_{\lambda}\overset{\sim}{\to} H^{n-1}(
F_0 ;\CC)_{\lambda}$ centered at $n-1$.
\end{theorem}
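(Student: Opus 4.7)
The plan is to apply Lemma~\ref{weifillem} to the adjunction morphism between the two natural Hodge-module lifts of the meromorphic nearby cycle sheaves, and then exploit the concentration result of Theorem~\ref{the-3}. Set
\begin{equation}
M := {i_{f}}_{!}(\CC^{H}_{X}[n]|_{X\setminus Q^{-1}(0)}), \qquad
M' := {i_{f}}_{*}(\CC^{H}_{X}[n]|_{X\setminus Q^{-1}(0)})
\end{equation}
in the bounded derived category of mixed Hodge modules on $X\times \CC_{t}$. Because $\CC^{H}_{X}[n]$ is pure of weight $n$ and the restriction to the open subset $X\setminus Q^{-1}(0)$ preserves purity, the general yoga of weights in Saito's theory of mixed Hodge modules guarantees that $M$ has weights $\leq n$ while $M'$ has weights $\geq n$. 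The natural adjunction arrow $M \to M'$ is a morphism in the category of mixed Hodge modules.

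Next I would apply the nearby cycle functor $\psi^{H}_{t,\lambda}$ to this morphism. The crucial input is Proposition~\ref{pro-1}: under the hypotheses of Theorem~\ref{weifilmonofil}, the induced morphism
\begin{equation}
\psi^{H}_{t,\lambda}(M) \longrightarrow \psi^{H}_{t,\lambda}(M')
\end{equation}
is an isomorphism at the level of underlying perverse sheaves for every $\lambda \neq 1$. Since the forgetful functor from mixed Hodge modules to perverse sheaves is faithful and conservative, this lifts to an isomorphism in the category of mixed Hodge modules. Lemma~\ref{weifillem} applied with $g=t$ and $l=n$ then yields that the weight filtration of $\psi^{H}_{t,\lambda}(M')$ coincides with the monodromy weight filtration centered at $n-1$.

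Finally, I would pull back via $j_{0}\colon \{0\}\hookrightarrow X$. By Theorem~\ref{the-3} the underlying perverse sheaf $\psi^{\mer}_{f,\lambda}(\CC_{X}[n])[-1]$ of $\psi^{H}_{t,\lambda}(M')$ is, for $\lambda\neq 1$, a skyscraper sheaf supported at the origin whose stalk in degree zero is exactly $H^{n-1}(F_{0};\CC)_{\lambda}$. Consequently the mixed Hodge structure defined by $j_{0}^{*}\psi^{H}_{t,\lambda}(M')$ is concentrated in a single degree, and its weight filtration is the one just identified, i.e.\ the monodromy weight filtration of $\Phi_{n-1,0}$ centered at $n-1$.

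The main obstacle is really the bookkeeping in the second paragraph: one has to be sure that the sheaf-level isomorphism furnished by Proposition~\ref{pro-1} is actually the image of the Hodge-theoretic adjunction $M\to M'$ under the forgetful functor, and that the hypotheses of Lemma~\ref{weifillem} are literally applicable (in particular that Saito's nearby cycle functor $\psi^{H}_{t,\lambda}$ used in the lemma matches the construction used to define the Hodge structure on $H^{n-1}(F_0;\CC)_{\lambda}$, including the shift conventions). Once these identifications are in place, the argument is a direct combination of Proposition~\ref{pro-1}, Lemma~\ref{weifillem}, and Theorem~\ref{the-3}.
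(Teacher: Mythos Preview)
Your proposal is correct and follows essentially the same route as the paper: set up $M={i_f}_!(\CC^H_X[n]|_{X\setminus Q^{-1}(0)})$ and $M'={i_f}_*(\CC^H_X[n]|_{X\setminus Q^{-1}(0)})$ with weights $\leq n$ and $\geq n$ respectively, invoke Proposition~\ref{pro-1} so that Lemma~\ref{weifillem} applies with $l=n$, and then use the concentration from Theorem~\ref{the-3} together with the fact that $\psi^H_{t,\lambda}(M')$ is supported at the origin to identify the resulting weight filtration with that on $H^{n-1}(F_0;\CC)_\lambda$. The paper carries out exactly these steps (the definition of $M,M'$ and the weight bounds appear just before the statement of the theorem), so your argument matches it.
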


\begin{proof}
Since by Theorem \ref{the-3} we have 
\begin{equation}
H^{j}(F_{0};\CC)_{\lambda}=0 \qquad 
(j\neq n-1), 
\end{equation}
the complex $j_{0}^*\psi_{t,\lambda}^{H}(
{i_{f}}_{*}(\CC^{H}_{X}[n]|_{X\setminus 
Q^{-1}(0)})) \in \Db ( {\rm MHM}_{\{ 0 \}} 
 )$ is quasi-isomorphic to 
$H^0j_{0}^*\psi_{t,\lambda}^{H}({i_{f}}_{*}(
\CC^{H}_{X}[n]|_{X\setminus Q^{-1}(0)}))$ and 
its underlying perverse sheaf is 
$H^{n-1}(F_{0};\CC)_{\lambda}$. 
By Lemma~\ref{weifillem} 
the weight filtration of 
$\psi_{t,\lambda}^{H}({i_{f}}_{*}(
\CC^{H}_{X}[n]|_{X\setminus Q^{-1}(0)}))$
is the monodromy weight filtration centered at $n-1$.
Moreover, in this situation, the support of 
$\psi_{t,\lambda}^{H}({i_{f}}_{*}(
\CC^{H}_{X}[n]|_{X\setminus Q^{-1}(0)}))$ is contained in $\{0\}$.
This implies that 
$\psi_{t,\lambda}^{H}({i_{f}}_{*}(
\CC^{H}_{X}[n]|_{X\setminus Q^{-1}(0)}))$ 
is the zero extension of 
$H^{0}j_{0}^*\psi_{t,\lambda}^{H}(
{i_{f}}_{*}(\CC^{H}_{X}[n]|_{X\setminus Q^{-1}(0)}))$. 
Hence we can identify the weight filtration of 
$H^{0}j_{0}^*\psi_{t,\lambda}^{H}(
{i_{f}}_{*}(\CC^{H}_{X}[n]|_{X
\setminus Q^{-1}(0)}))$
with that of $\psi_{t,\lambda}^{H}(
{i_{f}}_{*}(\CC^{H}_{X}[n]|_{X\setminus Q^{-1}(0)}))$. 
This completes the proof.
\end{proof}

\begin{remark}\label{NJCF} 
For $k\in\ZZ_{> 0}$ and $\lambda \in \CC$ 
we denote by $J_{k,\lambda}$ the number of 
the Jordan blocks in $\Phi_{n-1, 0}$ with size $k$ 
for the eigenvalue $\lambda$. Then 
by Theorem~\ref{weifilmonofil} for $\lambda \not= 1$ 
we can describe $J_{k,\lambda}$ in terms 
of the weight filtration of $H^{n-1}(F_{0};
\CC)_{\lambda}$ as follows:
\begin{equation}\label{formJor}
J_{k,\lambda}=\dim \ \GR^{W}_{n-k}H^{n-1}(F_{0};
\CC)_{\lambda}- \dim \ \GR^{W}_{n-k-2}H^{n-1}(F_{0};\CC)_{\lambda}.
\end{equation}
Moreover the number of 
the Jordan blocks in $\Phi_{n-1, 0}$ with size $\geq k$ 
for the eigenvalue $\lambda$ is equal to 
\begin{equation}\label{formulJor}
\dim \ \GR^{W}_{n-2+k}H^{n-1}(F_{0};
\CC)_{\lambda}+ \dim \ \GR^{W}_{n-1+k}H^{n-1}(F_{0};\CC)_{\lambda}.
\end{equation}
\end{remark}

\section{Motivic Milnor fibers of of rational functions}\label{sect-2}

Following Denef-Loeser \cite{D-L-1},
\cite{D-L-2}, \cite{D-L-3}, Guibert-Loeser-Merle \cite{GLM} 
and Raibaut \cite{Raibaut-1.5}, \cite{Raibaut-2}, we shall define
and study the motivic reincarnations of the Milnor
fibers of rational functions. More precisely, we will define the notion 
of motivic Milnor fiber of rational functions and give 
their relation with the topological properties of the Milnor fiber. 
Such work was studied by Raibaut \cite{Raibaut-2} where the 
author considered elements in the Grothendieck ring under 
the action of $\mathbb{G}_m$. In this section, we use another 
construction for the motivic Milnor fiber, namely, we consider the 
 Grothendieck ring under the action of $\hat{\mu}$ and define the 
notion of motivic Milnor fiber as an element in $\M^{\hat{\mu}}_{\CC}$. In our opinion, in this section we give, 
in some sense, another proof for the results in  \cite{Raibaut-2}.

Let $f(x)= \frac{P(x)}{Q(x)}$ be a rational 
function on $X= \mathbb{C}^n$ and set 
$X_0:=P^{-1}(0)$.  Assume that $0\in P^{-1}(0)
\cap Q^{-1}(0)$. For two positive 
integers $m, r >0$ we set 
\begin{equation}
\mathscr{X}^{r}_{m,1}:=\{\varphi\in 
\mathcal{L}_m(X) \ | \ {\rm ord} Q(\varphi(t))\leq r m, 
f(\varphi(t))= t^m \ \textrm{mod}
\ t^{m+1}\}
\end{equation}
and
\begin{equation}
\mathscr{X}^{r}_{m,1,0}:=\{\varphi\in 
\mathscr{X}^{r}_{m,1} \ | \ \pi^m_0(\varphi)=0 \}
\end{equation}
where $\mathcal{L}_m(X)$ is the space of 
order $m$ arcs on $X$ and $\pi^m_0$ is the truncation 
morphism $\mathcal{L}_m(X)\to X$.
They are locally closed subvarieties of 
$\mathcal{L}_m(X)$ (see, e.g. 
\cite[Section 1]{D-L-3} for the notions of arc spaces). Since $m\geq 1$, 
we see that $\pi^m_0(\mathscr{X}^{r}_{m, 1})
\subset X_0$. Then $\mathscr{X}^{r}_{m,1}$ 
is an $X_0$-variety. Now for 
$m \in \ZZ_{>0}$, let
$\mu_m \simeq \ZZ/\ZZ m$ be the multiplicative
group consisting of the $m$-roots
in $\CC$. We denote by $\hat{\mu}$ the
projective limit $\underset{m}{\varprojlim}
\mu_m$ of the projective system
$\{ \mu_i \}_{i \geq 1}$ with morphisms
$\mu_{im} \longrightarrow \mu_i$
given by $t \longmapsto t^m$. Then 
$\mathscr{X}^{r}_{m,1}$ and 
$\mathscr{X}^{r}_{m,1,0}$ are endowed with 
a good action of $\mu_m$ (and hence of 
$\hat{\mu}$) defined by $\lambda 
\cdot \varphi(t)=\varphi(\lambda t)$.

Following the notations of \cite{D-L-2}, 
for a variety $S$ over $\CC$ we 
denote by $\M^{\hat{\mu}}_{S}$  the ring obtained
from the Grothendieck ring
$\KK_0^{\hat{\mu}}(\Var_{S})$ of
varieties over $S$ with good
$\hat{\mu}$-actions by inverting the
Lefschetz motive $\LL$ which is the class of 
$\mathbb{A}^1_{\CC}\times S \in
\KK_0^{\hat{\mu}}(\Var_{S})$ with trivial 
action of $\hat{\mu}$. Recall also that for an 
$S$-variety $E$ with good $\hat{\mu}$-action, 
we denote its class in  $\KK_0^{\hat{\mu}}
(\Var_{S})$ by $[E, \hat{\mu}]$ (also denoted 
by $[E/S, \hat{\mu}]$, or simply by $[E]$). 
When $S$ consists of only one geometric point, 
i.e. $S={\rm Spec}(\CC)$, we will write 
$\KK_0^{\hat{\mu}}(\Var_{
\CC})$ instead of $\KK_0^{\hat{\mu}}(\Var_{S})$. 
For any $s\in S(\CC)$, there are  natural maps 
$i_s^{-1}:\KK_0^{\hat{\mu}}(\Var_{S})\to 
\KK_0^{\hat{\mu}}(\Var_{\CC})$ and  
$i_s^{-1}:\M^{\hat{\mu}}_{S}\to \M^{\hat{\mu}}_{\CC}$, 
defined by $[E, \hat{\mu}]\mapsto 
[E_s, \hat{\mu}]$ where $E_s$ is the 
fiber at $s$ of $E\to S$. Thus we obtain elements $[
\mathscr{X}^{r}_{m,1}/X_0, \hat{\mu}]$ (or $[
\mathscr{X}^{r}_{m,1}, \hat{\mu}],$ or $ [
\mathscr{X}^{r}_{m,1}]$) of $\M^{\hat{\mu}}_{X_0}$ and $ [
\mathscr{X}^{r}_{m,1,0}]$ of 
$\M^{\hat{\mu}}_{\CC}$. Note that $ [
\mathscr{X}^{r}_{m,1,0}]=i_0^{-1}( [
\mathscr{X}^{r}_{m,1}])$.

\begin{definition}
For a positive integer $r >0$ and the rational function 
$f=\frac{P}{Q}: 
X\to \mathbb{A}^1_{\mathbb{C}}$ we define a power 
series $Z^{r}_f(T)$ of $T$ over $\M^{\hat{\mu}}_{X_0}$ by
\begin{equation}
Z^{r}_f(T)= \sum_{m \geq 1} \Bigl( [
\mathscr{X}^{r}_{m,1}/X_0, \hat{\mu}] \cdot \LL^{-mn}
\Bigr) T^m.
\end{equation}
We call it a motivic zeta function of $f$. 
\end{definition}

We recall the notion of rational series. 

\begin{definition}
Let $\mathscr A$ be one 
of the following rings
\begin{equation}
\mathbb{Z}[\LL,\LL^{-1}], \quad \mathbb{Z}
\left[\LL,\LL^{-1},\frac{1}{1-\LL^{-i}}\right]_{i>0}, 
\quad\text{and}\ \ \M_S^{\hat\mu}.
\end{equation}
Let $\mathscr A[[T]]_{\textrm {sr}}$ be the 
$\mathscr A$-submodule of $\mathscr A[[T]]$ 
generated by $1$ and by finite products of 
elements of the form $\frac{\LL^aT^b}{1-\LL^aT^b}$ 
with $a$ in  $\mathbb{Z}$ and 
$b$ in $\ZZ_{>0}$. 
\end{definition}

By \cite{D-L-1}, there 
is a unique $\mathscr A$-linear homomorphism 
\begin{equation}
\lim_{T\rightarrow\infty}: \mathscr 
A[[T]]_{\textrm {sr}}\rightarrow \mathscr A
\end{equation}
such that 
\begin{equation}
\lim_{T\rightarrow\infty}\frac{\LL^aT^b}{1-\LL^aT^b}=-1.
\end{equation}
For a finite set $I$, we consider rational 
polyhedral convex cones in $\mathbb{R}_{> 0}^I$. 
By this, we mean a convex subset of 
$\mathbb{R}_{> 0}^I$ defined by a finite 
number of integral linear inequalities of type 
$l\geq 0$ or $l>0$ and stable by 
multiplication by $\mathbb{R}_{>0}$. Let $\Delta$ 
be a rational polyhedral convex cone in 
$\mathbb{R}_{>0}^I$ and let $\overline{\Delta}$ 
denote its closure in $\mathbb{R}_{\geq 0}^I$. 
Let $l$ and $v$ be two integral linear forms 
on $\mathbb{Z}^I$ positive on 
$\overline{\Delta}\setminus\{(0,\dots,0)\}$. 
Let us consider the series
\begin{equation}
S_{\Delta,l,v}(T):=\sum_{a\in\Delta\cap
\mathbb{N}_{>0}^I}\LL^{-v(a)}T^{l(a)}
\end{equation}
in $\mathbb{Z}[\LL,\LL^{-1}][[T]]$. 
Then we have the following lemma (see 
\cite[Lemma 2.1.5]{Guibert}  and  \cite[Section 2.9]{GLM}).

\begin{lemma}\label{lmrational}
With the above notations, the series 
$S_{\Delta,l,v}(T)$ lies in $\mathbb{Z}[
\LL,\LL^{-1}][[T]]_{\textrm {sr}}$ and the limit 
$\lim_{T\to \infty}S_{\Delta,l,v}(T)$ is 
equal to $\chi_c(\Delta)$ i.e. the Euler characteristic 
with compact support of $\Delta$. 
In particular, if $\Delta$ is a rational polyhedral 
convex cone in $\mathbb{R}_{> 0}^I$ defined by
\begin{equation}
\sum_{i\in K}a_ix_i\leq \sum_{i\in I\setminus K}a_ix_i
\end{equation}
with $a_i\in \ZZ_+$, $a_i>0$ for $i\in K$, 
$K$ and $I\setminus K$ non-empty, then 
we have $\lim_{T\to \infty}S_{\Delta,l,v}(T)=0.$
\end{lemma}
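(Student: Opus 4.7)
The plan is to reduce to a generating-function computation over a finite unimodular simplicial refinement of $\overline{\Delta}$, and then match the resulting limit with the combinatorial formula for $\chi_c$. First I would fix a rational unimodular simplicial subdivision $\Sigma$ of the closed cone $\overline\Delta \subseteq \mathbb{R}_{\geq 0}^I$, which exists and is finite by the standard toric desingularization procedure applied to any simplicial refinement of $\overline{\Delta}$. Denoting by $\Sigma'$ the subset of cones $\sigma \in \Sigma$ whose relative interior $\sigma^\circ$ lies in $\Delta$, one obtains a locally closed decomposition
\begin{equation}
\Delta \;=\; \bigsqcup_{\sigma \in \Sigma'} \sigma^\circ.
\end{equation}

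For each $\sigma \in \Sigma'$ of dimension $k \geq 1$, let $e_1, \ldots, e_k$ denote its primitive integer generators; by unimodularity they form part of a $\mathbb{Z}^I$-basis. Hence the lattice points of $\sigma^\circ$ are exactly $\{\sum_{i=1}^k n_i e_i : n_i \in \mathbb{Z}_{>0}\}$, and, using that $l(e_i)>0$ by the positivity of $l$ on $\overline{\Delta} \setminus \{0\}$, expanding as $k$ independent geometric series yields
\begin{equation}
\sum_{a \in \sigma^\circ \cap \mathbb{Z}^I} \LL^{-v(a)} T^{l(a)} \;=\; \prod_{i=1}^{k} \frac{\LL^{-v(e_i)} T^{l(e_i)}}{1 - \LL^{-v(e_i)} T^{l(e_i)}}.
\end{equation}
This is a product of basic generators of $\mathbb{Z}[\LL, \LL^{-1}][[T]]_{\mathrm{sr}}$, so summing over the finite set $\Sigma'$ proves $S_{\Delta,l,v}(T) \in \mathbb{Z}[\LL, \LL^{-1}][[T]]_{\mathrm{sr}}$.

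Applying the $\mathscr A$-linear map $\lim_{T \to \infty}$ termwise, each such $k$-fold product contributes $(-1)^k$ by the defining axiom, giving
\begin{equation}
\lim_{T \to \infty} S_{\Delta, l, v}(T) \;=\; \sum_{\sigma \in \Sigma'} (-1)^{\dim \sigma}.
\end{equation}
On the other hand, additivity of $\chi_c$ on the locally closed decomposition, combined with the homeomorphism $\sigma^\circ \cong \mathbb{R}^{\dim \sigma}$ (which gives $\chi_c(\sigma^\circ) = (-1)^{\dim \sigma}$), yields $\chi_c(\Delta) = \sum_{\sigma \in \Sigma'}(-1)^{\dim \sigma}$, matching the limit.

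For the ``in particular'' claim, I would write $\Delta = \Delta^\circ \sqcup \partial \Delta$, where $\Delta^\circ$ is cut out by the strict inequality and $\partial \Delta$ by the corresponding equality inside $\mathbb{R}_{>0}^I$. Since $K$ and $I \setminus K$ are both nonempty, $\Delta^\circ$ is nonempty and open in $\mathbb{R}^I$, hence homeomorphic to $\mathbb{R}^{|I|}$, whereas $\partial \Delta$ is nonempty and open in the hyperplane, hence homeomorphic to $\mathbb{R}^{|I|-1}$; so $\chi_c(\Delta) = (-1)^{|I|} + (-1)^{|I|-1} = 0$. The main obstacle in the argument is the existence and finiteness of a unimodular simplicial subdivision of $\overline{\Delta}$, which is essential here: the identification of each cell's generating series with a single product of basic generators truly requires the generators to form part of a lattice basis, since otherwise the half-open fundamental parallelepiped contributes additional lattice representatives that spoil the clean factorization.
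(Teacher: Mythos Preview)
The paper does not actually supply its own proof of this lemma; it simply records the statement and cites \cite[Lemma~2.1.5]{Guibert} and \cite[Section~2.9]{GLM}. Your argument is a correct and self-contained proof, and it is essentially the standard one found in those references: pass to a finite unimodular simplicial refinement of $\overline{\Delta}$, compute the generating series over each open cell as a product of geometric series, and match the resulting alternating sum of signs with $\chi_c$ via additivity. Two small remarks. First, for the decomposition $\Delta=\bigsqcup_{\sigma\in\Sigma'}\sigma^\circ$ to hold you are implicitly using that $\overline{\Delta}\setminus\Delta$ is a union of faces of $\overline{\Delta}$ (so that any fan refining $\overline{\Delta}$ automatically respects it); this is true because $\Delta$ is cut out from $\overline{\Delta}$ by making some of the defining non-strict inequalities strict, but it is worth saying. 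Second, in the ``in particular'' clause your nonemptiness claims for $\Delta^\circ$ and $\partial\Delta$ tacitly assume that some $a_i$ with $i\in I\setminus K$ is strictly positive; if all such $a_i$ vanish then $\Delta$ itself is empty and the conclusion is trivial, so you may want to dispose of that degenerate case in one line.
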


We shall describe the motivic zeta function 
of $f$ in terms of log-resolutions. For this purpose, 
assume that $f$ satisfies the 
conditions in Theorem \ref{the-3}. Then there exists a 
resolution of singularities 
$\pi_0: \widetilde{X} \rightarrow X= \CC^n$
of $P^{-1}(0)$ and 
$Q^{-1}(0)$ which induces an isomorphism
$\widetilde{X} \setminus \pi_0^{-1}( \{ 0 \} )
\simto X \setminus \{ 0 \}$ such that $\pi_0^{-1}( \{ 0 \} )$ 
and $\pi_0^{-1}(P^{-1}(0)\cup Q^{-1}(0))$ are 
strict normal crossing divisors in 
$\widetilde{X}$. Denote by $D_i$ ($1 \leq i \leq m$), 
the irreducible components of the normal 
crossing divisor $D= \pi_0^{-1}( \{ 0 \} )$, 
let $D_P= \widetilde{P^{-1}(0)}$ and 
$D_Q=~\widetilde{Q^{-1}(0)}$ be the (smooth) proper
transforms of $P^{-1}(0)$ and $Q^{-1}(0)$ in 
$\widetilde{X}$ respectively. Note 
that they intersect transversally. Then the 
rational function $f \circ \pi_0$ on
$\widetilde{X}$ has some points of indeterminacy. As in the proof 
of \cite[Theorem 3.6]{M-T-2} we construct towers of 
blow-ups of $\widetilde{X}$ over
it to eliminate the points of indeterminacy of 
$f \circ \pi_0$. Then we obtain a proper 
morphism $\pi_1 : Y \longrightarrow \widetilde{X}$ 
of smooth complex varieties.
Set $\pi = \pi_0 \circ \pi_1 : Y \longrightarrow
X$ and let $\pi^{-1}(0)= \cup_{i=1}^k E_i$ be the 
irreducible decomposition of the normal crossing 
divisor $\pi^{-1}(0)$ in $Y$. Let $E_P$ and 
$E_Q$ be the (smooth) proper
transforms of $D_P= \widetilde{P^{-1}(0)}$ and 
$D_Q= \widetilde{Q^{-1}(0)}$ in $Y$ respectively. 
By our construction of $Y$, the divisor 
$\pi^{-1}(0) \cup E_P \cup E_Q$
in $Y$ is strict normal crossing. Denote by 
$G$ the union of its irreducible components 
along which the order of the rational function 
$g=f \circ \pi$ is $\leq 0$ so that we have $E_Q \subset G$.
Now we define an open subset $\Omega$ of $Y$ 
by $\Omega =Y \setminus G$ and set 
\begin{equation}
U= \pi^{-1}(0) \setminus G= \pi^{-1}(0)
\cap \Omega \subset \pi^{-1}(0).
\end{equation}
Then we have an isomorphism
\begin{equation}\label{eqqq}
\psi_{f}^{\merc}( \CC_X )_0 \simeq
R \Gamma_c (U; \psi_{f \circ \pi}( \CC_Y )).
\end{equation}
For each $i\in \{1, \ldots, k\}\cup\{P\}$, let $N_i(P), N_i(Q)>0$ 
be the orders of the zeros of $P \circ \pi, 
Q \circ \pi$ along $E_i$ and $b_i:=N_i(P)- N_i(Q) \in \ZZ$ 
that of $g=f\circ \pi$ along $E_i$. Set 
\begin{equation}
C:=\{i\in \{1, \ldots, k\}: b_i >0 \}\cup\{P\}.
\end{equation}
For each $i\in \{1, \ldots, k\}\cup\{P\}$, 
we also denote by $\nu_i-1$ the multiplicity of $E_i$ 
in the divisor of $\pi^{*}dx$, where $dx$ is a local 
non-vanishing volume form at $0$, i.e. a local
generator of the sheaf of differential forms of 
maximal degree at $0$. Note that we have $\nu_i>0$. 

 For a non-empty subset $I \subset \{1,2,\ldots, k, P\}$, 
set $E_I= \bigcap_{i \in I} E_i$, 
\begin{equation}
E_I^{\circ}=E_I \setminus \left\{ \(
\bigcup_{i \notin I}E_i\) \cup
E_Q \right\} \subset Y
\end{equation}
and $d_I= {\rm gcd} (b_i)_{i \in I}>0$. Then, 
as in \cite[Section 2.3]{D-L-3}, we can construct 
an unramified Galois covering $\tl{E_I^{\circ}} 
\longrightarrow E_I^{\circ}$
of $E_I^{\circ}$ as follows. First, for a point 
$p \in E_I^{\circ}$ we take an Zariski affine 
open neighborhood $W$ of $p$ in $Y$ on which there 
exist regular functions
$\xi_i$ $(i \in I)$ such that $E_i \cap 
W=\{ \xi_i=0 \}$ for any $i \in I$. Then on $W$ we 
have $g=g_{1,W} (g_{2,W})^{d_I}$,
where we set $g_{1,W}=g \prod_{i \in I}\xi_i^{-b_i}$ 
and $g_{2,W}=\prod_{i \in I} \xi_i^{\frac{b_i}{d_I}}$. 
Note that $g_{1,W}$ is a unit on $W$ and $g_{2,W} 
\colon W \longrightarrow \CC$ is a regular 
function. It is easy to see that $E_I^{\circ}$ 
is covered by such affine open subsets $W$. 
Then as in \cite[Section 2.3]{D-L-3} by gluing the varieties 
\begin{equation}\label{eq:6-26}
\tl{E_{I,W}^{\circ}}:=\{(t,z) \in
\CC^* \times (E_I^{\circ}
 \cap W) \ |\
t^{d_I} =(g_{1,W})^{-1}(z)\}
\end{equation}
together in an obvious way, we obtain the 
variety $\tl{E_I^{\circ}}$ over $E_I^{\circ}$ 
as an $X_0$-variety.  The unramified Galois covering
$\tl{E_I^{\circ}}$ of $E_I^{\circ}$ admits 
a natural $\mu_{d_I}$-action defined by assigning the automorphism
$(t,z) \longmapsto (\zeta_{d_I} t, z)$
of $\tl{E_I^{\circ}}$ to the generator
$\zeta_{d_I}:=\exp
(2\pi\sqrt{-1}/d_I) \in \mu_{d_I}$. Namely
the variety $\tl{E_I^{\circ}}$ is
equipped with a good $\hat{\mu}$-action
in the sense of \cite[Section
2.4]{D-L-2}. Then $[\tl{E_I^{\circ}}, 
\hat{\mu}]$ is an element in $\KK_0^{\hat{\mu}}(\Var_{X_0})$.

By the same argument as in proof of 
\cite[Theorem 2.4]{D-L-3} we obtain the following.

\begin{lemma} \label{contactloci}
With the previous notations, for any 
positive integers 
$r, m>0$, we have the following equality in $\M^{\hat{\mu}}_{X_0}$
\begin{equation}
[\mathscr{X}^{r}_{m,1}]= \LL^{mn}
\sum_{\begin{subarray}{c}I\subset
\{1,\ldots,k\}\cup\{P\}\\ I\neq 
\emptyset\end{subarray}}(\LL-1)^{|I|-1}[
\tl{E_I^{\circ}}]\left(\sum_{\begin{subarray}{c}k_i\geq 1,
 i\in I\\ \sum k_ib_i=m, \sum k_iN_i(Q)\leq 
r \sum k_ib_i\end{subarray}} \LL^{
-\sum_{i\in I}k_i\nu_i}\right).
\end{equation}
\end{lemma}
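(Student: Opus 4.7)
The plan is to follow the strategy of Denef--Loeser in \cite[Theorem 2.4]{D-L-3}, with two minor adaptations to our setting: the allowance of the extra index $P$ (corresponding to the proper transform $E_P = \widetilde{P^{-1}(0)}$) and the pole condition ${\rm ord}_t Q(\varphi(t)) \leq rm$. Neither changes the basic skeleton of the argument.

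First, I would pull the arc space back along $\pi \colon Y \to X$. Every arc $\varphi \in \mathscr{X}^{r}_{m,1}$ satisfies $\varphi(0) \in P^{-1}(0)$ (since $f(\varphi(t)) = t^m \bmod t^{m+1}$ forces $P(\varphi(0)) = 0$), and since $\pi$ is proper and an isomorphism outside $\pi^{-1}(0)$, such an arc lifts, up to a set of motivic measure zero, uniquely to an arc $\tilde\varphi$ on $Y$ with $\tilde\varphi(0) \in \pi^{-1}(P^{-1}(0)) = E_P \cup \bigcup_{i : N_i(P) > 0} E_i$. The motivic change-of-variables formula of \cite[Lemma 3.4]{D-L-1} then rewrites $[\mathscr{X}^{r}_{m,1}]$ as an integral over the $Y$-side, weighted by $\LL^{-{\rm ord}_t\,\mathrm{jac}\,\pi} = \LL^{-\sum_{i} k_i (\nu_i - 1)}$ on the stratum where $\tilde\varphi$ has order $k_i$ along $E_i$.

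Second, I would stratify these lifted arcs according to which $E_I^{\circ}$ contains $\tilde\varphi(0)$, for non-empty $I \subset \{1, \dots, k\} \cup \{P\}$. On a Zariski affine neighborhood $W$ of $p \in E_I^{\circ}$ with local equations $\xi_i = 0$ for $E_i \cap W$ ($i \in I$), one has $g = f \circ \pi = g_{1,W} \prod_{i \in I} \xi_i^{b_i}$ with $g_{1,W}$ a unit on $W$. Writing $\xi_i(\tilde\varphi(t)) = a_i t^{k_i} + O(t^{k_i + 1})$ with $a_i \in \CC^*$ and $k_i \geq 1$ ($i \in I$), the defining conditions of $\mathscr{X}^{r}_{m,1}$ translate into
\begin{equation*}
\sum_{i \in I} k_i b_i = m, \qquad \sum_{i \in I} k_i N_i(Q) \leq r \sum_{i \in I} k_i b_i, \qquad g_{1,W}(p) \prod_{i \in I} a_i^{b_i} = 1.
\end{equation*}

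Third, for fixed $I$ and $(k_i)_{i \in I}$ satisfying the numerical constraints, I would identify the space of admissible leading coefficients $(a_i)$ subject to the last equation with an $(|I| - 1)$-dimensional subtorus fibration over the $\mu_{d_I}$-cover $\tl{E_I^{\circ}}$ defined in \eqref{eq:6-26}. Together with the $\hat\mu$-action induced by $\tilde\varphi(t) \mapsto \tilde\varphi(\zeta_m t)$, this contributes the factor $(\LL - 1)^{|I| - 1}[\tl{E_I^{\circ}}, \hat\mu]$. Counting the free higher-order jet coefficients of the $\xi_i$'s and the coefficients in transverse directions, and combining with the Jacobian factor $\LL^{-\sum_{i \in I} k_i(\nu_i - 1)}$ from step one, produces the net exponent $mn - \sum_{i \in I} k_i \nu_i$. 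Summing over $I$ and $(k_i)$ yields the claimed formula. The main technical obstacle is the exponent bookkeeping, where the ambient dimension, the Jacobian of $\pi$, the fixed orders along the $E_i$'s, and the free transverse coefficients must combine to give exactly $\LL^{mn - \sum_{i \in I} k_i \nu_i}$; this is essentially the content of \cite[Theorem 2.4]{D-L-3}, and the argument transfers verbatim once one notes that the proper transform $E_P$ (with $N_P(P) = 1$, $N_P(Q) = 0$, hence $b_P = 1$) plays the same local role as the exceptional divisors $E_i$ with $b_i > 0$, and that the pole inequality ${\rm ord}_t Q \leq rm$ is a purely linear restriction on the indexing set of the inner sum that does not interfere with the local jet computation.
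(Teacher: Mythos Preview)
Your proposal is correct and follows exactly the approach the paper indicates: the paper's own proof consists solely of the sentence ``By the same argument as in proof of \cite[Theorem 2.4]{D-L-3} we obtain the following,'' and your sketch supplies precisely those details, including the two needed adaptations (the extra index $P$ for the proper transform $E_P$, and the linear pole constraint $\sum k_iN_i(Q)\le r\sum k_ib_i$ on the inner sum). One small imprecision worth fixing: $\pi$ is not an isomorphism outside $\pi^{-1}(0)$ (the blow-ups in $\pi_1$ over $D_P\cap D_Q$ need not lie over $0\in X$), but this is harmless since the arc-lifting step only requires $\pi$ to be proper and birational.
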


The Euler characteristic of complex 
constructible sets can be regarded as a ring homomorphism 
\begin{equation}
\chi: \KK_0^{\hat{\mu}}(\Var_{\CC}) \longrightarrow \mathbb{Z}.
\end{equation}
Since we have $\chi(\LL)=1$, it extends uniquely 
to a ring homomorphism  
\begin{equation}
\chi:  \M^{\hat{\mu}}_{\CC} \longrightarrow \mathbb{Z}.
\end{equation}
The following result is an analogue for rational 
function of \cite[Theorem 1.1]{D-L-3} and is also obtained in \cite{Raibaut-2} under a different construction.

\begin{corollary}\label{Cor-MLF}
There exists $r_0 \gg 0$ such that for any 
$r > r_0$ and $m \geq 1$, 
the Lefschetz number $\Lambda (m)_{f,0}$ of 
$f$ at the origin $0\in X$ is equal to 
$\chi(\mathscr{X}^{r}_{m,1,0})$.
\end{corollary}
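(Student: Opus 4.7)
The proof hinges on applying Lemma~\ref{contactloci} together with the Euler characteristic ring homomorphism $\chi\colon \M^{\hat{\mu}}_{\CC} \to \ZZ$. First I would pull back the formula of Lemma~\ref{contactloci} to the fiber over the origin via $i_0^{-1}$, obtaining an identity for $[\mathscr{X}^{r}_{m,1,0}]$ in $\M^{\hat{\mu}}_{\CC}$, and then apply $\chi$. Because $\chi(\LL)=1$, we have $\chi(\LL-1)=0$, so every term in the sum with $|I| \geq 2$ drops out, and each factor $\LL^{-\sum k_i \nu_i}$ contributes $1$. Thus $\chi(\mathscr{X}^{r}_{m,1,0})$ collapses to a sum over single indices $i \in \{1,\ldots,k\} \cup \{P\}$ weighted by the cardinality of the set of admissible $k_i$.

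For a single index $i$ the constraint $\sum k_i N_i(Q) \leq r \sum k_i b_i$ becomes $N_i(Q) \leq r b_i$. Setting $r_0 := \max \{ N_i(Q)/b_i \ | \ i \in C \}$, which is finite and independent of $m$, this is automatic once $r > r_0$; hence the count of admissible $k_i$ is $1$ when $b_i > 0$ and $b_i$ divides $m$, and $0$ otherwise. For the index $i = P$, although $b_P = 1 > 0$, by definition $E_P^{\circ} = E_P \setminus (\bigcup_{j=1}^k E_j \cup E_Q)$, and since $\pi^{-1}(0) = \bigcup_{j=1}^k E_j$, the intersection $E_P^{\circ} \cap \pi^{-1}(0)$ is empty, so the contribution $i_0^{-1}[\tl{E_P^{\circ}}]$ vanishes. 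For the remaining exceptional $i \in \{1,\ldots,k\}$ with $b_i > 0$ the stratum $E_i^{\circ}$ already lies in $\pi^{-1}(0)$, so $i_0^{-1}[\tl{E_i^{\circ}}] = [\tl{E_i^{\circ}}]$; since $\tl{E_i^{\circ}} \to E_i^{\circ}$ is an unramified covering of degree $b_i$, one gets $\chi(\tl{E_i^{\circ}}) = b_i \cdot \chi(E_i^{\circ})$.

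Combining these steps yields
\begin{equation}
\chi(\mathscr{X}^{r}_{m,1,0}) = \sum_{\substack{i \in \{1,\ldots,k\} \\ b_i > 0,\ b_i | m}} b_i \cdot \chi(E_i^{\circ}).
\end{equation}
Matching the definitions of $E_i^{\circ}$ in Section~\ref{sect-2} with those of $D_i^{\circ}$ in Theorem~\ref{the-5}, and the integers $b_i$ with $m_i$, this agrees precisely with the right-hand side of Corollary~\ref{cor-111}, giving $\chi(\mathscr{X}^{r}_{m,1,0}) = \Lambda(m)_{f,0}$.

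The main obstacle is the careful bookkeeping of the contribution of the proper transform component $E_P$: although it enters the index set $\{1,\ldots,k\} \cup \{P\}$ of Lemma~\ref{contactloci} with $b_P = 1 > 0$, one must verify that it vanishes at the Euler-characteristic level at the origin, so that the motivic formula reduces exactly to the A'Campo-type formula of Corollary~\ref{cor-111}. Everything else amounts to noting that $\chi$ annihilates $\LL - 1$ and is multiplicative on unramified covers.
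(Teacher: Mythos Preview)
Your proof is correct and follows essentially the same route as the paper's: apply Lemma~\ref{contactloci} at the origin, use $\chi(\LL-1)=0$ to kill $|I|\geq 2$, choose $r_0=\sup_{i:b_i>0}N_i(Q)/b_i$, and match with Corollary~\ref{cor-111}. Your write-up is in fact more careful than the paper's on two points the paper leaves implicit: the vanishing of the $i=P$ contribution (because $E_P^{\circ}\cap\pi^{-1}(0)=\emptyset$) and the covering-degree identity $\chi(\tl{E_i^{\circ}})=b_i\,\chi(E_i^{\circ})$.
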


\begin{proof}
With the previous notations, we have 
$\chi((\LL-1)^{|I|-1})=0$ for any $I$ with $|I|>1$. 
Then, it follows from Lemma \ref{contactloci} that
\begin{equation}
\chi(\mathscr{X}^{r}_{m,1,0})=
\sum_{\begin{subarray}{c}i\in\{1, \ldots, 
k\}, b_i>0\\b_i|m, N_i(Q)\leq r b_i
\end{subarray}}b_i\chi(E^{\circ}_i).
\end{equation}
Therefore, if $r> r_0:= 
\sup_{\{i : b_i>0\}}\frac{N_i(Q)}{b_i}$, we get
\begin{equation}
\chi(\mathscr{X}^{r}_{m,1,0}
)=\sum_{\begin{subarray}{c}i\in\{1, \ldots, k\}, 
b_i>0\\b_i|m\end{subarray}}b_i\chi(E^{\circ}_i).
\end{equation} 
Combining this with Corollary \ref{cor-111} we obtain 
\begin{equation}
\Lambda (m)_{f,0}=\chi(\mathscr{X}^{r}_{m,1,0}).
\end{equation}
\end{proof}

The following results and definitions are 
inspired by \cite[Section 3.5]{D-L-2}, 
\cite[Section 2]{D-L-3}, \cite[Section 3.8]{GLM}, \cite[Section 1.4]{Raibaut-1.5} and \cite[Section 4.1]{Raibaut-2}. The result below could be implied from \cite[Theorem 6]{Raibaut-2}, though we provide here a different proof.

\begin{theorem}\label{rationalthm} 
There exists $r_0 \gg 0$ such that for any 
$r > r_0$ the series $Z^{r}_f(T)$ is 
rational, and it is independent of 
$r > r_0$. Moreover for $r > r_0$ 
the limit 
$\lim_{T\to\infty}Z^{r}_f(T) \in \M^{\hat{\mu}}_{X_0}$ 
exists. For $r > r_0$ we set 
\begin{equation}
\mathcal{S}^{\merc}_f:= -\lim_{T\to \infty}
Z^{r}_f(T) \in \M^{\hat{\mu}}_{X_0}.
\end{equation}
Then we have 
\begin{equation}
\mathcal{S}^{\merc}_f=\sum_{I\subset C, I\neq \emptyset}
(1-\LL)^{|I|-1}[\tl{E_I^{\circ}}, 
\hat{\mu}].
\end{equation}
\end{theorem}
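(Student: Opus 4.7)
My plan is to substitute Lemma~\ref{contactloci} into the definition of $Z^r_f(T)$, interchange summations, and treat each resulting inner series via Lemma~\ref{lmrational}, following the strategy of Denef--Loeser and Guibert--Loeser--Merle. After the $\LL^{mn}$ in Lemma~\ref{contactloci} cancels the $\LL^{-mn}$ in the definition, one obtains
\begin{equation*}
Z^r_f(T)=\sum_{\emptyset\neq I\subset\{1,\ldots,k,P\}}(\LL-1)^{|I|-1}[\tl{E_I^{\circ}},\hat{\mu}]\cdot S_I^r(T),
\end{equation*}
where, writing $l(k):=\sum_{i\in I}k_ib_i$ and $v(k):=\sum_{i\in I}k_i\nu_i$,
\begin{equation*}
S_I^r(T)=\sum_{\substack{k\in\ZZ_{>0}^I,\ l(k)\geq 1\\ \sum_i k_iN_i(Q)\leq r\,l(k)}}\LL^{-v(k)}T^{l(k)}.
\end{equation*}
Each $S_I^r(T)$ is of the form $S_{\Delta,l,v}(T)$ appearing in Lemma~\ref{lmrational}, so the rationality of $Z^r_f(T)$ as an element of $\M_{X_0}^{\hat{\mu}}[[T]]_{\textrm{sr}}$ is immediate from the first assertion of that lemma.

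Setting $r_0:=\max_{i:b_i>0}N_i(Q)/b_i$ and assuming $r>r_0$, I would split the sum over $I$ into three cases. \textbf{(a)} $I\subset C$: every $b_i>0$, and the choice of $r_0$ makes the inequality $\sum k_iN_i(Q)\leq r\,l(k)$ automatic, so the cone is all of $\RR_{>0}^I$, in particular $r$-independent; since $l,v$ are strictly positive on $\overline{\RR_{>0}^I}\setminus\{0\}$, Lemma~\ref{lmrational} gives $\lim_{T\to\infty}S_I^r(T)=\chi_c(\RR_{>0}^I)=(-1)^{|I|}$. \textbf{(b)} $I\cap C=\emptyset$: then $P\notin I$ (since $b_P=1>0$) and $b_i\leq 0$ for every $i\in I\subset\{1,\ldots,k\}$, so the condition $l(k)\geq 1$ has no solutions and $S_I^r(T)=0$. \textbf{(c)} $K:=I\cap C$ is a proper non-empty subset of $I$: the constraint rewrites as
\begin{equation*}
\sum_{i\in I\setminus K}k_i\bigl(N_i(Q)-rb_i\bigr)\ \leq\ \sum_{i\in K}k_i\bigl(rb_i-N_i(Q)\bigr),
\end{equation*}
and for $r>r_0$ both sides have strictly positive integer coefficients: for $i\in K$ by the defining inequality of $r_0$, and for $i\in I\setminus K\subset\{1,\ldots,k\}$ because $b_i\leq 0$ and $N_i(Q)>0$ (as $E_i$ is an exceptional divisor of $\pi$ lying over $0\in Q^{-1}(0)$). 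This is exactly the shape treated in the second part of Lemma~\ref{lmrational}, whence $\lim_{T\to\infty}S_I^r(T)=0$.

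Combining the three cases and using $(\LL-1)^{|I|-1}\cdot(-1)^{|I|}=-(1-\LL)^{|I|-1}$, one concludes
\begin{equation*}
\mathcal{S}^{\merc}_f=-\lim_{T\to\infty}Z^r_f(T)=\sum_{\emptyset\neq I\subset C}(1-\LL)^{|I|-1}[\tl{E_I^{\circ}},\hat{\mu}],
\end{equation*}
which, since only case~(a) contributes non-trivially to the limit, is manifestly the same for every $r>r_0$. The main obstacle is case~(c): one must verify that the cone genuinely has the shape required by the vanishing half of Lemma~\ref{lmrational}, i.e.\ that $N_i(Q)-rb_i>0$ for $i\in I\setminus K$. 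This rests on the geometric fact that $N_i(Q)>0$ for every exceptional component of the log-resolution $\pi$ above $0$, which in turn uses the hypothesis $0\in P^{-1}(0)\cap Q^{-1}(0)$ ensuring that every such $E_i$ is contained in $(Q\circ\pi)^{-1}(0)$.
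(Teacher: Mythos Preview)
Your proof is correct and follows essentially the same route as the paper: substitute Lemma~\ref{contactloci}, recognize each inner sum as an $S_{\Delta,l,v}(T)$, and apply Lemma~\ref{lmrational} according to whether $I\subset C$ or not. The only notable differences are cosmetic: the paper parametrizes its cone $\Delta^r_I$ in $\RR_{>0}\times\RR_{>0}^I$ (keeping $m$ as an extra coordinate with $l(m,(k_i))=m$, so positivity of $l$ is automatic), whereas you project to $\RR_{>0}^I$ and take $l(k)=\sum k_ib_i$; and the paper treats your cases~(b) and~(c) together as the single case $I\setminus C\neq\emptyset$, with your $K$ being the complement of the paper's $K$.
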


\begin{proof}
It follows from Lemma \ref{contactloci} that
\begin{equation}
Z^{r}_f(T)=\sum_{\begin{subarray}{c}I\subset
\{1,\ldots,k\}\cup\{P\}\\ I\neq \emptyset
\end{subarray}}(\LL-1)^{|I|-1}[\tl{E_I^{\circ}}] 
\sum_{m \geq 1} \ 
\left(\sum_{\begin{subarray}{c}k_i\geq 1, i\in I, 
m\geq 1\\ \sum k_ib_i=m, \sum k_iN_i(Q)\leq 
r \sum k_ib_i\end{subarray}} \LL^{-\sum_{i\in I}k_i\nu_i}T^m\right).
\end{equation}
For each nonempty subset $I$ of the set  
$\{1,\ldots,k\}\cup\{P\}$, we consider the cone 
\begin{equation}
\Delta^{r}_I:=\left\{(m, (k_i))\in\mathbb{R}_{>0}
\times\mathbb{R}_{>0}^I \ | \ \sum_{i\in I}
 k_iN_i(Q)\leq r \sum_{i\in I} k_ib_i, \ 
\sum_{i\in I} k_ib_i=m\right\}.
\end{equation}
Let us consider also the linear forms $l, v$ on 
$\mathbb{R}_{>0}\times\mathbb{R}_{>0}^I$ defined by 
\begin{equation}
l(m, (k_i))=m, \qquad v(m, (k_i))=\sum_{i\in I}\nu_ik_i.
\end{equation}
Then we can rewrite the motivic zeta function as follows 
\begin{equation}
Z^{r}_f(T)=\sum_{\begin{subarray}{c}I\subset
\{1,\ldots,k\}\cup\{P\}\\ I\neq \emptyset
\end{subarray}}(\LL-1)^{|I|-1}[\tl{E_I^{\circ}}]
\cdot S_{\Delta^{r}_I, l, v}(T).
\end{equation}
It is easy to see that $\Delta^{r}_I$ is a 
rational polyhedral cone and the integral 
linear forms $l, v$ are positive on 
$\overline{\Delta^{r}_I}\setminus \{0\}$. 
Then by Lemma \ref{lmrational} the series 
$Z^{r}_f(T)$ is rational.

First, consider the case 
where $I\subset C$. Then we have $b_i>0$ for any 
$i\in I$. So for $r>r_0:= \sup_{i\in I}N_i(Q)/b_i$ we get
\begin{equation}
S_{\Delta^{r}_I, l, v}(T)=
\prod_{i\in I} \frac{\LL^{-\nu_i}T^{b_i}}{1-\LL^{-\nu_i}T^{b_i}}
\end{equation}
and hence $\lim_{T\to \infty}S_{\Delta^{r}_I, l, v}(T)
=(-1)^{|I|}$. Next, consider the case where 
$K:=I\setminus C\neq\emptyset$. 
Then we have $\lim_{T\to \infty}S_{\Delta^{r}_I, l, v}
(T)=\chi_c(\Delta^{r}_I)$. Nevertheless,  
the cone $\Delta^{r}_I$ is homeomorphic to 
the following one in $\mathbb{R}_{>0}^I$: 
\begin{equation}
\left\{ (k_i)\in\mathbb{R}_{>0}^I \ | \ 
\sum_{i\in K} k_i(N_i(Q)- r b_i)\leq
\sum_{i\in I\setminus K} k_i(r b_i-N_i(Q))\right\}.
\end{equation}
By Lemma \ref{lmrational} 
its Euler characteristic with compact 
support is equal to $0$. This completes the proof. 
\end{proof}

Applying the base change morphism $\textrm{Fiber}_0: 
\M^{\hat{\mu}}_{X_0}\to \M^{\hat{\mu}}_{\CC}$,
defined by $[A/X_0, \hat{\mu}]\mapsto 
[A\times_{X_0}0, \hat{\mu}]$ we set 
\begin{equation}
\mathcal{S}_{f, 0}^{\merc}:= 
\textrm{Fiber}_0(\mathcal{S}^{\merc}_f) 
\in \M^{\hat{\mu}}_{\CC}.
\end{equation}

\begin{definition}
We call $\mathcal{S}_{f,0}^{\merc} \in \M^{\hat{\mu}}_{\CC}$ 
the motivic Milnor fiber with compact support of $f$ 
at the origin $0 \in X= \CC^n$. 
\end{definition}

By Theorem \ref{rationalthm} we obtain the following result. The formula in this result could be obtained from \cite[Proposition 4]{Raibaut-2}.  

\begin{theorem}
The motivic Milnor fiber with compact support
$\mathcal{S}_{f,0}^{\merc}$ of 
$f$ at the origin $0 \in X= \CC^n$ is written as 
\begin{equation}\label{MMF}
\mathcal{S}_{f,0}^{\merc}
=\sum_{\begin{subarray}{c}I\subset \{1, \ldots, k\}
\cap C \\ I \neq \emptyset\end{subarray}}
\Big\{
(1-\LL)^{|I| -1}
[\tl{E_I^{\circ} \setminus E_P}]
+
(1-\LL)^{|I|}
[E_I^{\circ} \cap E_P]
\Big\} \in
\M_{\CC}^{\hat{\mu}},
\end{equation}
where $[E_I^{\circ} \cap E_P] \in
\M_{\CC}^{\hat{\mu}}$ is endowed
with the trivial action of
$\hat{\mu}$.
\end{theorem}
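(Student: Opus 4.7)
The strategy is to apply the base change morphism $\textrm{Fiber}_0\colon \M^{\hat{\mu}}_{X_0}\to \M^{\hat{\mu}}_{\CC}$ to the expression
\[
\mathcal{S}^{\merc}_f = \sum_{I\subset C,\ I\neq \emptyset}(1-\LL)^{|I|-1}[\tl{E_I^{\circ}}]
\]
established in Theorem~\ref{rationalthm}, and to reorganize the resulting sum according to the position of $I$ relative to $\{P\}$ and $\{1,\ldots,k\}$. For each non-empty $I\subset C$, the set $E_I^{\circ}$ is viewed as an $X_0$-variety via $\pi$, so $\tl{E_I^{\circ}}\times_{X_0}\{0\}$ is the restriction of $\tl{E_I^{\circ}}$ to $E_I^{\circ}\cap \pi^{-1}(0)$. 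If $I\cap \{1,\ldots,k\}\neq \emptyset$, then $E_I \subset \pi^{-1}(0)$, so the fiber equals all of $\tl{E_I^{\circ}}$ with its $\hat{\mu}$-action unchanged. On the other hand, for $I=\{P\}$ the set $E_P^{\circ}$ is by construction disjoint from $\cup_{j=1}^{k} E_j = \pi^{-1}(0)$, so the fiber is empty and this index contributes $0$.

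I then split the remaining indices into two subfamilies according to whether $P\in I$. First, for $I\subset \{1,\ldots,k\}\cap C$ with $I\neq \emptyset$, the $E_I^{\circ}$ of Section~\ref{sect-2} already excludes $E_P$, so it coincides with the locus $E_I^{\circ}\setminus E_P$ in the notation of the theorem (where $E_I^{\circ}$ there only removes the $E_j$ with $j\in \{1,\ldots,k\}\setminus I$ together with $E_Q$). These terms produce the first summand of the claimed formula. Second, for $I=J\cup\{P\}$ with $\emptyset \neq J\subset \{1,\ldots,k\}\cap C$, one has $E_I = E_J\cap E_P$ and the $E_I^{\circ}$ of Section~\ref{sect-2} is naturally identified with $E_J^{\circ}\cap E_P$ in the notation of the theorem. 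Since $|I|-1 = |J|$, the coefficient is $(1-\LL)^{|J|}$, matching the second summand.

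The main remaining point is to verify that for $I=J\cup\{P\}$ the covering $\tl{E_I^{\circ}}\to E_I^{\circ}$ reduces to the identity with the \emph{trivial} $\hat{\mu}$-action. To see this, recall that $E_P$ is the proper transform under $\pi_1$ of the smooth reduced hypersurface $\widetilde{P^{-1}(0)}\subset \widetilde{X}$, hence $\textrm{ord}_{E_P}(P\circ\pi) = 1$; moreover the blow-up $\pi_1$ along $\widetilde{P^{-1}(0)}\cap \widetilde{Q^{-1}(0)}$ separates the two proper transforms, so $E_P\cap E_Q = \emptyset$ and $\textrm{ord}_{E_P}(Q\circ\pi) = 0$. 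Consequently $b_P = 1$, and the integer $d_{J\cup\{P\}} = \gcd(b_P,(b_j)_{j\in J})$ equals $1$, so $\tl{E_I^{\circ}}\to E_I^{\circ}$ is the trivial $\mu_1$-cover. Assembling the two families of contributions then yields the desired formula for $\mathcal{S}_{f,0}^{\merc}$; the principal subtlety is this last triviality step, where the specific role played by the zero divisor $E_P$ in the meromorphic-with-compact-support setting enters.
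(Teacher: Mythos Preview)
Your argument is correct and is exactly the approach the paper has in mind: the paper states this theorem as an immediate consequence of Theorem~\ref{rationalthm} (``By Theorem~\ref{rationalthm} we obtain the following result''), and you have spelled out precisely the routine case analysis behind that claim, including the key point that $b_P=1$ forces $d_{J\cup\{P\}}=1$ and hence a trivial cover. One small inaccuracy: $\pi_1$ is not a single blow-up along $\widetilde{P^{-1}(0)}\cap\widetilde{Q^{-1}(0)}$ but a tower of blow-ups eliminating the indeterminacy of $f\circ\pi_0$ (as in \cite[Theorem~3.6]{M-T-2}); however this does not affect your computation of $b_P$, which only uses that $E_P$ is the proper transform of the reduced hypersurface $P^{-1}(0)$ and that $Q$ does not vanish identically on it.
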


As in \cite[Section 3.1.2 and 3.1.3]{D-L-2},
we denote by $\HSm$ the abelian
category of Hodge structures with a
quasi-unipotent endomorphism. Then, to
the object $\psi_{f}^{\merc}( \CC_X )_0 \in
\Dbc(\{ 0 \})$ and the
semisimple part of the monodromy
automorphism acting on it, we can associate
an element
\begin{equation}
[H_{f,0}^{\merc}] = \sum_{j \in \ZZ} (-1)^j
[H^j \psi_{f}^{\merc}( \CC_X )_0  ]
\in \KK_0(\HSm)
\end{equation}
as in \cite{D-L-1} and \cite{D-L-2},
where the weight filtration of
the limit mixed Hodge structure
$[H^j \psi_{f}^{\merc}( \CC_X )_0] \in \HSm$
is the ``relative" monodromy filtration
defined by the Milnor monodromy
of $\psi_{f}^{\merc}( \CC_X )_0$.
To describe the element
$[H_{f,0}^{\merc}]\in \KK_0(\HSm)$ in terms of
$\mathcal{S}_{f,0}^{\merc}\in \M_{\CC}^{\hat{\mu}}$, let
\begin{equation}
\chi_h \colon \M_{\CC}^{\hat{\mu}}
\longrightarrow \KK_0(\HSm)
\end{equation}
be the Hodge characteristic morphism
defined in \cite{D-L-2} which
associates to a variety $Z$ with a
good $\mu_d$-action the Hodge structure
\begin{equation}
\chi_h ([Z])=\sum_{j \in \ZZ} (-1)^j
[H_c^j(Z;\QQ)] \in \KK_0(\HSm)
\end{equation}
with the actions induced by the one
$z \longmapsto \exp (2\pi\sqrt{-1}/d)z$
($z\in Z$) on $Z$. Then
as in \cite[Theorem 4.4]{M-T-4} and
\cite{Raibaut-1}, by applying
\cite[Theorem 4.2.1]{D-L-1} and
\cite[Section 3.16]{GLM}
to our situation \eqref{eqqq},
we obtain the following
result. This result could be also implied from \cite[Theorem 8]{Raibaut-2}.

\begin{theorem}\label{thm:7-6}
In the Grothendieck group $\KK_0(\HSm)$, we have
the equality
\begin{equation}
[H_{f,0}^{\merc}]=\chi_h( \mathcal{S}_{f,0}^{\merc}).
\end{equation}
\end{theorem}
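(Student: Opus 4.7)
The strategy is to combine the topological description \eqref{eqqq} of $\psi_{f}^{\merc}(\CC_X)_0$ with the Denef--Loeser motivic formula for nearby cycles of a function whose zero divisor has normal crossings, following the template of \cite[Theorem 4.4]{M-T-4} and \cite{Raibaut-1}.

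First, I would use the isomorphism \eqref{eqqq} to rewrite
\[
[H_{f,0}^{\merc}] \;=\; \sum_{j\in\ZZ}(-1)^{j}\bigl[H^{j}R\Gamma_c(U;\psi_{f\circ\pi}(\CC_Y))\bigr] \;\in\; \KK_0(\HSm),
\]
where $U=\pi^{-1}(0)\setminus G$ is open in $\pi^{-1}(0)$ and $g=f\circ\pi$ is a genuinely holomorphic function on the neighbourhood $\Omega=Y\setminus G$ of $U$. By the construction of $\pi$, the zero divisor of $g$ on $\Omega$ is the strict normal crossing divisor whose components are the $E_i$ with $i\in\{1,\ldots,k\}\cap C$ together with $E_P\cap\Omega$. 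Consequently the classical nearby cycle sheaf $\psi_{f\circ\pi}(\CC_Y)|_{\Omega}$ admits the standard local description along the strata $E_I^{\circ}$ in terms of the unramified Galois $\mu_{d_I}$-coverings $\tl{E_I^{\circ}}\to E_I^{\circ}$ built in \eqref{eq:6-26}.

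Next, I would stratify $U$ by the locally closed pieces $(E_I^{\circ}\setminus E_P)\cap\Omega$ and $(E_I^{\circ}\cap E_P)\cap\Omega$ indexed by nonempty $I\subset\{1,\ldots,k\}\cap C$, and apply the additivity of the Hodge characteristic $\chi_h$ together with the motivic/Hodge realisation of the nearby cycles of a monomial on a normal crossing divisor. This last ingredient is exactly the content of \cite[Theorem 4.2.1]{D-L-1} and the convolution formula of \cite[Section 3.16]{GLM}. A direct stratum-by-stratum computation then shows that $(E_I^{\circ}\setminus E_P)\cap\Omega$ contributes $\chi_h\bigl((1-\LL)^{|I|-1}[\tl{E_I^{\circ}\setminus E_P},\hat\mu]\bigr)$ to the alternating sum, while $(E_I^{\circ}\cap E_P)\cap\Omega$ contributes $\chi_h\bigl((1-\LL)^{|I|}[E_I^{\circ}\cap E_P]\bigr)$, the extra factor $(1-\LL)$ reflecting a transverse $\CC^{*}$-fibre along $E_P$ on which the $\mu_{d_I}$-covering trivialises and the $\hat\mu$-action becomes trivial. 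Summing these contributions over all nonempty $I\subset\{1,\ldots,k\}\cap C$ and comparing with formula \eqref{MMF} yields the desired equality $[H_{f,0}^{\merc}]=\chi_h(\mathcal{S}_{f,0}^{\merc})$.

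The main obstacle is the compatibility of the weight filtrations on the two sides. The class $[H_{f,0}^{\merc}]\in\KK_0(\HSm)$ is defined using the ``relative'' monodromy filtration attached to the mixed Hodge module ${i_f}_{!}(\CC_X^{H}[n]|_{X\setminus Q^{-1}(0)})$, whereas the right hand side emerges from applying $\chi_h$ to a motivic expression built on the resolution $\pi$. Verifying agreement reduces, via \eqref{eqqq}, to the compatibility of $\chi_h$ with the mixed Hodge module functors $\psi_{t,\lambda}^{H}$ and with compactly supported direct image, which is precisely how $\chi_h$ is constructed in \cite{D-L-2}. Once this compatibility is granted, the remainder of the argument is a routine additivity computation.
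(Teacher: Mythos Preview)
Your proposal is correct and follows essentially the same approach as the paper: the paper's argument is just the one-sentence sketch ``as in \cite[Theorem~4.4]{M-T-4} and \cite{Raibaut-1}, by applying \cite[Theorem~4.2.1]{D-L-1} and \cite[Section~3.16]{GLM} to our situation \eqref{eqqq}'', and you have unpacked precisely this template. If anything, your write-up is more detailed than what the authors provide.
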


From now on, we assume that $f$ is polynomial-like 
and satisfies the 
conditions in Theorem \ref{the-3}. 
For an element $[V] \in \KK_0(\HSm)$,
$V \in \HSm $ with a
quasi-unipotent endomorphism
$\Theta \colon V \simto V$, $p, q \geq 0$ and
$\lambda \in \CC$ denote by
$e^{p,q}([V])_{\lambda}$ the dimension of the
$\lambda$-eigenspace of the morphism
$V^{p,q} \simto V^{p,q}$ induced by
$\Theta$ on the $(p,q)$-part $V^{p,q}$ of $V$.
Then by Proposition \ref{pro-1} and 
Theorem \ref{weifilmonofil}
we obtain the following result.

\begin{corollary}\label{ICR}
Assume that $\lambda \not= 1$.
Then we have $e^{p,q}(
[H_{f,0}^{\merc}])_{\lambda}=0$
for $(p,q) \notin [0,n-1] \times [0,n-1]$.
Moreover for any $(p,q) \in [0,n-1]
\times [0,n-1]$ we have
the Hodge symmetry
\begin{equation}
e^{p,q}( [H_{f,0}^{\merc}])_{\lambda}=e^{n-1-q,n-1-p}(
[H_{f,0}^{\merc}])_{\lambda}.
\end{equation}
\end{corollary}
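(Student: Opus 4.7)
The plan is to reduce the assertion to the structure of the single mixed Hodge structure on $V:=H^{n-1}(F_0;\CC)_\lambda$, and then deduce both claims from the monodromy weight filtration supplied by Theorem~\ref{weifilmonofil}.

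First I would invoke Proposition~\ref{pro-1} to identify the canonical morphism $\psi^{\merc}_{f,\lambda}(\CC_X[n])[-1]\to\psi^{\mer}_{f,\lambda}(\CC_X[n])[-1]$ as an isomorphism for $\lambda\neq 1$. This morphism is the underlying map of perverse sheaves of the morphism of mixed Hodge modules obtained by applying $\psi^H_{t,\lambda}$ to the canonical transformation $i_{f!}\to i_{f*}$, and is therefore itself an isomorphism of mixed Hodge modules, since the forgetful functor $\mathrm{MHM}_X\to \Perv(X)$ is conservative. Combined with the concentration $H^j(F_0;\CC)_\lambda=0$ for $j\neq n-1$ from Theorem~\ref{the-3}, this yields
\begin{equation*}
e^{p,q}([H_{f,0}^{\merc}])_{\lambda}\;=\;(-1)^{n-1}\dim V^{p,q},
\end{equation*}
where $V^{p,q}$ denotes the $(p,q)$-piece of the mixed Hodge structure on $V$ inherited from the mixed Hodge module $\psi^H_{t,\lambda}(i_{f*}(\CC^H_X[n]|_{X\setminus Q^{-1}(0)}))$ at the origin. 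Both assertions of the corollary thus reduce to statements about the Hodge numbers $h^{p,q}(\GR^{W}_{p+q}V)=\dim V^{p,q}$.

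Second I would extract the Hodge symmetry from Theorem~\ref{weifilmonofil}. Writing $\Phi_{n-1,0}|_V=\lambda\cdot\exp(N)$ with $N$ nilpotent, the logarithm $N$ is a morphism of mixed Hodge structures of type $(-1,-1)$ by Saito's theory. Since $W_\bullet V$ is the monodromy weight filtration of $N$ centered at $n-1$, the powers of $N$ induce isomorphisms $N^{k}\colon \GR^{W}_{n-1+k}V\simto \GR^{W}_{n-1-k}V$ shifting Hodge bidegree by $(-k,-k)$. Hence $h^{p,q}(\GR^{W}_{n-1+k}V)=h^{p-k,q-k}(\GR^{W}_{n-1-k}V)$, and specializing $k=p+q-(n-1)$ produces
\begin{equation*}
h^{p,q}(\GR^{W}_{p+q}V)\;=\;h^{n-1-q,\,n-1-p}(\GR^{W}_{2(n-1)-(p+q)}V),
\end{equation*}
which is the Hodge symmetry asserted in the corollary.

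Finally, for the support $(p,q)\in[0,n-1]\times[0,n-1]$, I would establish that the Hodge filtration on $V$ satisfies $F^{0}V=V$ and $F^{p}V=0$ for $p>n-1$. The lower bound is automatic for any mixed Hodge structure of geometric origin; the upper bound has to be propagated through the sequence of mixed Hodge module functors $i_{f*}$, $\psi^H_{t,\lambda}[-1]$ and $j_0^{*}$ starting from $\CC^H_X[n]$, whose Hodge filtration already lies in the expected range. Together these bounds give $\dim V^{p,q}=0$ whenever $p\notin[0,n-1]$, and the Hodge symmetry of the previous paragraph then transfers the same vanishing to $q\notin[0,n-1]$. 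The essential obstacle is precisely this Hodge-filtration bound $F^{p}V=0$ for $p>n-1$; once this standard dimensional bound in Saito's theory is in place, the remainder of the argument follows formally from Proposition~\ref{pro-1}, Theorem~\ref{the-3} and Theorem~\ref{weifilmonofil}.
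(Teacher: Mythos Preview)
Your proposal is correct and follows the same route the paper indicates: the paper simply says the corollary is obtained ``by Proposition~\ref{pro-1} and Theorem~\ref{weifilmonofil}'', and you have spelled out precisely how those two inputs combine---Proposition~\ref{pro-1} and the concentration from Theorem~\ref{the-3} reduce $[H_{f,0}^{\merc}]_\lambda$ to the single mixed Hodge structure on $H^{n-1}(F_0;\CC)_\lambda$, while the monodromy weight filtration of Theorem~\ref{weifilmonofil} then yields the $(p,q)\leftrightarrow(n-1-q,n-1-p)$ symmetry via the isomorphisms $N^k\colon \GR^W_{n-1+k}\simto\GR^W_{n-1-k}$. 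Your identification of the Hodge-filtration bound $F^pV=0$ for $p>n-1$ (equivalently $F^0V=V$) as the remaining standard input from Saito's theory is accurate; once that is in place, the symmetry transfers the vanishing for $p\notin[0,n-1]$ to $q\notin[0,n-1]$, completing the support claim.
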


\begin{definition}\label{def-HSP}
We define a Puiseux series
$\tl{\rm sp}_{f,0}(t)$ with coefficients in $\ZZ$ by
\begin{equation}
\tl{\rm sp}_{f,0}(t)
= \sum_{\alpha \in (\QQ \setminus \ZZ) \cap (0,n)}\left(
 \dim \ \GR^{\lfloor{\alpha}\rfloor}_{F}
H^{n-1}(F_{0};\CC)_{\exp(2\pi\sqrt{-1}\alpha)} \right)
t^{\alpha}.
\end{equation}
We call it the reduced Hodge spectrum of $f$ at the
origin $0 \in \CC^n$.
\end{definition}
By Corollary \ref{ICR} and
\begin{equation}
e^{p,q}( [H_{f,0}^{\merc}])_{\lambda}=
e^{q,p}( [H_{f,0}^{\merc}])_{\overline{\lambda}}
\end{equation}
we obtain the symmetry of the
reduced Hodge spectrum 
\begin{equation}
\tl{\rm sp}_{f,0}(t) = t^n \cdot
\tl{\rm sp}_{f,0} \Bigl( \frac{1}{t} \Bigr)
\end{equation}
centered at $\frac{n}{2}$.

We can reduce $\mathcal{S}_{f,0}^{\merc} \in
\M_{\CC}^{\hat{\mu}}$ as follows. 
Let $\pi_0: \widetilde{X} \rightarrow X= \CC^n$ 
be a resolution of singularities of $P^{-1}(0)$ and 
$Q^{-1}(0)$ which induces an isomorphism 
$\widetilde{X} \setminus \pi_0^{-1}( \{ 0 \} ) 
\simto X \setminus \{ 0 \}$ such that for 
any irreducible component $D_i$ ($1 \leq i \leq m$) of the 
(exceptional) normal crossing divisor 
$D= \pi_0^{-1}( \{ 0 \} )$ we have the condition 
\begin{equation}
{\rm ord}_{D_i} (P \circ \pi_0) > 
{\rm ord}_{D_i} (Q \circ \pi_0). 
\end{equation}
For $1 \leq i \leq m$ 
let $a_i>0$ be 
the order of $\tl{g}=f \circ \pi_0$
along $D_i$. Namely we set 
\begin{equation}
a_i= {\rm ord}_{D_i} (P \circ \pi_0)- 
{\rm ord}_{D_i} (Q \circ \pi_0) >0. 
\end{equation} 
Let $D_P= \widetilde{P^{-1}(0)}$ and 
$D_Q=~\widetilde{Q^{-1}(0)}$ be the (smooth) proper
transforms of $P^{-1}(0)$ and $Q^{-1}(0)$ in 
$\widetilde{X}$ respectively. 
For a non-empty subset 
$I \subset \{1,2,\ldots, m\}$, set 
$D_I= \bigcap_{i \in I} D_i$,
\begin{equation}
D_I^{\circ}=D_I \setminus \left\{ \(
\bigcup_{i \notin I}D_i\) \cup
D_Q
 \right\} \subset \widetilde{X}
\end{equation}
and $e_I = {\rm gcd} (a_i)_{i \in I} >0$.
Then we can construct an unramified
Galois covering $\tl{D_I^{\circ}
\setminus D_P}$
of $D_I^{\circ} \setminus D_P$ with a natural
$\mu_{e_I}$-action as above.
Let $[\tl{D_I^{\circ}
\setminus D_P }]$ be the element
of the ring $\M_{\CC}^{\hat{\mu}}$
which corresponds to $\tl{D_I^{\circ}
\setminus D_P}$.
Then as in the proof of \cite[Theorem 4.7]{M-T-3} 
we obtain the 
following result. Define an element
$\R_{f,0}^{\merc} \in
\M_{\CC}^{\hat{\mu}}$ by
\begin{equation}\label{MMFT}
\R_{f,0}^{\merc}
=\sum_{I \neq \emptyset}
\Big\{
(1-\LL)^{|I| -1}
[\tl{D_I^{\circ} \setminus D_P}]
+
(1-\LL)^{|I|}
[D_I^{\circ} \cap D_P]
\Big\} \in
\M_{\CC}^{\hat{\mu}},
\end{equation}
where $[D_I^{\circ} \cap D_P] \in
\M_{\CC}^{\hat{\mu}}$ is endowed
with the trivial action of
$\hat{\mu}$.

\begin{theorem}\label{thm:7-7}
In the Grothendieck group $\KK_0(\HSm)$, we have
the equality
\begin{equation}
\chi_h( \mathcal{S}_{f,0}^{\merc}) =
\chi_h( \R_{f,0}^{\merc}).
\end{equation}
\end{theorem}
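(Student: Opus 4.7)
The plan is to compare the expression \eqref{MMF} for $\mathcal{S}_{f,0}^{\merc}$, computed on the full log-resolution $\pi=\pi_0\circ\pi_1\colon Y\to X$, with the expression \eqref{MMFT} for $\R_{f,0}^{\merc}$, computed on the partial resolution $\pi_0\colon \widetilde{X}\to X$, and to show that these two elements of $\M_{\CC}^{\hat\mu}$ have the same image under $\chi_h$. The polynomial-like hypothesis is what makes this possible: it guarantees ${\rm ord}_{D_i}(f\circ \pi_0)=a_i>0$ for every irreducible component $D_i$ of $\pi_0^{-1}(0)$, so $f\circ \pi_0$ is already regular at the generic point of each $D_i$, and its indeterminacy locus is contained in $\bigcup_{i=1}^{m}(D_i\cap D_Q)$. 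The tower $\pi_1$ thus consists of iterated toric blow-ups along (and above) this codimension two locus.

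First I decompose the irreducible components $E_1,\dots,E_k$ of $\pi^{-1}(0)\subset Y$ into the proper transforms of the $D_i$ (still denoted $D_i$) and the new exceptional divisors $F_j$ of $\pi_1$; the order of $f\circ\pi$ along $F_j$ is again denoted $b_{F_j}\in\ZZ$. Because $b_{D_i}=a_i>0$ for each $i$, every $D_i$ lies in the set $C$ of Section~\ref{sect-2}, and the subsets $I\subset\{1,\dots,k\}\cap C$ in \eqref{MMF} fall into three classes: pure-$D$ subsets $I_D\subset\{D_1,\dots,D_m\}$, pure-$F$ subsets $J\subset\{F_j\mid b_{F_j}>0\}$, and mixed subsets $I_D\sqcup J$ with both parts non-empty. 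After applying $\chi_h$ the assertion reduces to two claims: the pure-$D$ plus mixed contributions reproduce $\chi_h(\R_{f,0}^{\merc})$, and the pure-$F$ sum maps to zero.

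For the first claim, for fixed $I_D\subset\{1,\dots,m\}$ the unramified cover $\tl{E_{I_D}^{\circ}\setminus E_P}$ used on $Y$ differs from $\tl{D_{I_D}^{\circ}\setminus D_P}$ used on $\widetilde{X}$ only by the removal of the preimages of the indeterminacy locus $D_{I_D}\cap D_Q$. Summing the mixed contributions over all non-empty $J\subset\{F_j\mid b_{F_j}>0\}$ adjacent to $I_D$ glues these missing pieces back in, via a stratification of $\pi_1^{-1}(D_{I_D}\cap D_Q)$ into torus orbits and a base change for the construction $\tl{(\cdot)}$. This reorganization is exactly the one carried out in the proof of \cite[Theorem~4.7]{M-T-3}, and modulo routine bookkeeping identifies the pure-$D$ plus mixed sum on $Y$ with the summand of $\R_{f,0}^{\merc}$ indexed by $I_D$ on $\widetilde{X}$.

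The final step, which I expect to be the main obstacle, is to show that the pure-$F$ sum
\begin{equation}
\sum_{\emptyset\ne J\subset\{F_j\mid b_{F_j}>0\}}\Bigl\{(1-\LL)^{|J|-1}[\tl{E_J^{\circ}\setminus E_P}]+(1-\LL)^{|J|}[E_J^{\circ}\cap E_P]\Bigr\}
\end{equation}
maps to zero under $\chi_h$. This is a local assertion at each indeterminacy point $p\in D_i\cap D_Q$, where one can choose coordinates $(x,y,\ldots)$ so that $f\circ\pi_0=u\cdot x^{a}y^{-b}$ with $a,b>0$ and $u$ a unit, and $\pi_1$ above $p$ is a smooth toric subdivision of $\RR^2_{\geq 0}$. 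The $F_j$ above $p$ are then indexed by the interior rays of this subdivision, each covering $\tl{E_J^{\circ}}$ decomposes as a product involving a $\CC^*$-factor, and each class $\chi_h[\tl{E_J^{\circ}}]$ therefore carries a factor of $(\LL-1)$. A telescoping along consecutive rays, entirely analogous to the local argument underlying Proposition~\ref{ProII} and to the end of the proof of Theorem~\ref{the-2}, forces the entire pure-$F$ sum to vanish in $\KK_0(\HSm)$. Combining the three steps gives the equality $\chi_h(\mathcal{S}_{f,0}^{\merc})=\chi_h(\R_{f,0}^{\merc})$.
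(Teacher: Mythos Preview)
Your approach is essentially the same as the paper's: the paper does not give an independent proof of this theorem but simply states that it follows ``as in the proof of \cite[Theorem 4.7]{M-T-3}'', and your proposal is precisely a fleshing-out of that argument, with the same key reference appearing explicitly in your text. Your decomposition into pure-$D$, mixed, and pure-$F$ contributions, together with the toric local analysis of the blow-up tower $\pi_1$ over $D_i\cap D_Q$ and the $(\LL-1)$-factor coming from the $\CC^*$-fibers, is the mechanism used in \cite{M-T-3}; so there is nothing to add beyond noting that your outline is more detailed than what the paper itself records.
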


From now on, we shall
rewrite our formula for $\R_{f,0}^{\merc} \in
\M_{\CC}^{\hat{\mu}}$
more explicitly by using the Newton polyhedron
$\Gamma_+(f)$ of $f$. For this purpose,
we assume that the rational function
$f(x)= \frac{P(x)}{Q(x)}$ is
non-degenerate at the origin $0 \in X= \CC^n$
and $P(x), Q(x)$ are convenient. For $f$ to be
polynomial-like, we assume moreover that
$\Gamma_+(P)$ is properly contained in
$\Gamma_+(Q)$ in the following sense.

\begin{definition}\label{def-29}
We say that the Newton polyhedron
$\Gamma_+(P)$ is properly contained in the one
$\Gamma_+(Q)$ if for any vector
$u \in \Int ( \RR^n_+)$ in the interior
$\Int ( \RR^n_+)$ of $\RR^n_+$ we have
\begin{equation}
\min_{v \in \Gamma_+(P)} \langle u, v \rangle
>
\min_{v \in \Gamma_+(Q)} \langle u, v \rangle.
\end{equation}
In this case, we write $\Gamma_+(P) \subset
\subset \Gamma_+(Q)$.
\end{definition}

We use the notations
in Section \ref{sec:s3}. For a compact face
$\gamma \prec \Gamma_{+}(f)$ of $\Gamma_{+}(f)$ let
\begin{equation}
\gamma(P) \prec \Gamma_+(P), \qquad
\gamma(Q) \prec \Gamma_+(Q)
\end{equation}
be the corresponding faces such that
\begin{equation}
\gamma = \gamma(P) + \gamma(Q).
\end{equation}
Let $\square_{\gamma} \subset \RR^n$ be
the convex hull of $\gamma (P)$ and
$\gamma (Q)$. We define the Cayley
polyhedron $\Gamma_+(P) * \Gamma_+(Q)
\subset \RR^{n+1}$ to be the convex hull of
\begin{equation}
( \Gamma_+(P) \times \{ 0 \} ) \cup
( \Gamma_+(Q) \times \{ 1 \} )
\end{equation}
in $\RR^{n+1}$.

\begin{example}
Let $P(x,y)=x^4+y^4, Q(x, y)=x^2+xy+y^3.$ 
Then, one can check that the rational 
function $f(x, y)=\frac{P(x, y)}{Q(x, y)}$ is 
non-degenerate at the origin $0 \in X= \CC^2$ 
and $\Gamma_+(P) \subset
\subset \Gamma_+(Q)$. The Newton polyhedron of 
$P$ has one compact facet $AB$ and that of $Q$ has 
two compact ones $CD$ and $DE$ (see Figure 1). 
For the face $\gamma= FG$ of $\Gamma_{+}(f)$ we have 
$$\gamma(P)= AB, \qquad \gamma(Q)= CD,$$ 
and $\square_{\gamma}$ is the convex hull 
of the four points $A, B, C, D$.
\end{example}

By our assumption
$\Gamma_+(P) \subset
\subset \Gamma_+(Q)$, the first projection
$\RR^{n+1} = \RR^{n} \times \RR^{1}
\rightarrow \RR^{n}$ induces an isomorphism
of a side face $\tilde{\gamma}$ of
$\Gamma_+(P) * \Gamma_+(Q)$ to $\square_{\gamma}$.
Hence we have $\dim \square_{\gamma} = \dim \gamma
+1$. Denote by
$\LL( \square_{\gamma} )
\simeq \RR^{\dim \gamma+1}$
the linear subspace of $\RR^n$ parallel to
the affine span ${\rm Aff}( \square_{\gamma} )
\simeq \RR^{\dim \gamma+1}$
of $\square_{\gamma}$ in $\RR^n$.
Let $H( \gamma, P)$ (resp. $H( \gamma, Q)$) $\subset
{\rm Aff}( \square_{\gamma} )$ be the
affine hyperplane of ${\rm Aff}( \square_{\gamma} )$
containing $\gamma (P)$ (resp. $\gamma (Q)$)
and parallel to ${\rm Aff}( \gamma )$.
By a suitable choice of a
translation isomorphism
${\rm Aff}( \square_{\gamma} )
\simeq \LL( \square_{\gamma} )$,
we may assume that the image of $H( \gamma, Q)
\subset {\rm Aff}( \square_{\gamma} )$ in
$\LL( \square_{\gamma} )$ passes through
the origin $0 \in \LL( \square_{\gamma} )$.
Denote by $L( \gamma, P)$ (resp. $L( \gamma, Q)$) $\subset
\LL( \square_{\gamma} )$ the image of
$H( \gamma, P)$ (resp. $H( \gamma, Q)$).
Let $M_{\gamma}=\ZZ^n
\cap \LL(\square_{\gamma})
\simeq \ZZ^{\dim \gamma+1}$ be the lattice in
$\LL(\square_{\gamma}) \simeq
\RR^{\dim \gamma+1}$.
In the dual
$\LL(\square_{\gamma})^* \simeq
\RR^{\dim \gamma+1}$ of $\LL(\square_{\gamma})$
consider also its dual lattice $M_{\gamma}^*
\simeq \ZZ^{\dim \gamma+1}$.
We define a one dimensional subspace
$L( \gamma, Q)^{\perp} \simeq \RR$
of $\LL(\square_{\gamma})^*$ by
\begin{equation}
L( \gamma, Q)^{\perp}= \{
u \in \LL(\square_{\gamma})^* \ | \
\langle u,v \rangle =0 \
(v \in L( \gamma, Q)) \}
\subset \LL(\square_{\gamma})^*.
\end{equation}
Let $\alpha_{\gamma} \in
( L( \gamma, Q)^{\perp} \cap
M_{\gamma}^*) \setminus \{ 0 \}
\simeq \ZZ \setminus \{ 0 \}$
be the primitive vector
whose value
on $L( \gamma, P) \subset \LL(\square_{\gamma})$
is a positive integer.
We call it the lattice distance of $L( \gamma, P)$
from $L( \gamma, Q)$ and denote it by $d_{\gamma} >0$.
By using the lattice $M_{\gamma}=\ZZ^n
\cap \LL(\square_{\gamma})
\simeq \ZZ^{\dim \gamma+1}$ in
$\LL(\square_{\gamma}) \simeq
\RR^{\dim \gamma+1}$
we set
\begin{equation}
T_{\square_{\gamma}}:=\Spec
(\CC[M_{\gamma}]) \simeq
(\CC^*)^{\dim \gamma +1}.
\end{equation}
For $v \in M_{\gamma}$
define their lattice heights $\height
(v, \gamma) \in \ZZ$ from
$L( \gamma, Q)$ in $\LL(\square_{\gamma})$
by $\height (v,
\gamma)= \langle \alpha_{\gamma},
v \rangle$. Set $\zeta_{d_{\gamma}}
= \exp ( \frac{2 \pi \sqrt{-1}}{d_{\gamma}} )
\in \CC^*$. Then to the group
homomorphism $M_{\gamma}
\longrightarrow \CC^*$ defined by $v
\longmapsto \zeta_{d_{\gamma}}^{\height
(v, \gamma)}$ we can naturally associate an
element $\tau_{\gamma} \in
T_{\square_{\gamma}}$. We define a Laurent
polynomial $g_{\gamma}(x)=\sum_{v \in
M_{\gamma}}c_v x^v$ on $T_{\square_{\gamma}}$ by
\begin{equation}
c_v=\begin{cases}
a_v & (v \in \gamma (P)),\\
 & \\
-b_v & (v \in \gamma (Q)),\\
 & \\
\ 0 & (\text{otherwise}),
\end{cases}
\end{equation}
where $P(x)=\sum_{v \in \ZZ^n_+} a_v x^v$
and $Q(x)=\sum_{v \in \ZZ^n_+} b_v x^v$.
Then the Newton polytope $NP(g_{\gamma})$
of $g_{\gamma}$ is $\square_{\gamma}$,
$\supp g_{\gamma}
\subset \gamma (P) \sqcup \gamma (Q)$ and the
hypersurface $Z_{\square_{\gamma}}^*=
\{ x \in T_{\square_{\gamma}}\ |\
g_{\gamma}(x)=0\}$ is non-degenerate
by our assumption.
Since $Z_{\square_{\gamma}}^*
\subset T_{\square_{\gamma}}$
is invariant by
the multiplication $l_{\tau_{\gamma}}
\colon  T_{\square_{\gamma}} \simto
T_{\square_{\gamma}}$ by $\tau_{\gamma}$,
$Z_{\square_{\gamma}}^*$ admits an
action of $\mu_{d_{\gamma}}$. We thus
obtain an element
$[Z_{\square_{\gamma}}^*]$ of
$\M_{\CC}^{\hat{\mu}}$.
For a compact face $\gamma
\prec \Gamma_{+}(f)$ let $s_{\gamma} >0$
be the dimension of the minimal coordinate subspace
of $\RR^n$ containing $\gamma$ and set $m_{\gamma}=
s_{\gamma}-\dim \gamma -1 \geq 0$.
Finally, for $\lambda \in \CC$ and
an element $H \in \KK_0(\HSm)$ denote
by $H_{\lambda} \in \KK_0(\HSm)$
the eigenvalue $\lambda$-part of
$H$. Then by applying the proof of
\cite[Theorem 4.3 (i)]{M-T-4} to our
geometric situation in Theorems \ref{thm:7-6}
and \ref{thm:7-7},
we obtain the following result.

\begin{theorem}\label{thm:7-12}
Assume that $\lambda \not= 1$. Then we have the equality
\begin{equation}
[H_{f,0}^{\merc}]_{\lambda}
=\chi_h( \mathcal{S}_{f,0}^{\merc})_{\lambda}=\sum_{\gamma}
\chi_h((1-\LL)^{m_{\gamma}} \cdot
[Z_{\square_{\gamma}}^*])_{\lambda}
\end{equation}
in $\KK_0(\HSm)$,
where in the sum $\sum_{\gamma}$
the face $\gamma$ of $\Gamma_{+}(f)$
ranges through the compact ones.
\end{theorem}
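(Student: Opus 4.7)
The first equality is immediate: applying Theorem \ref{thm:7-6} and taking the eigenvalue $\lambda$-part gives $[H_{f,0}^{\merc}]_{\lambda}=\chi_h(\mathcal{S}_{f,0}^{\merc})_{\lambda}$. By Theorem \ref{thm:7-7} we may replace $\mathcal{S}_{f,0}^{\merc}$ by $\R_{f,0}^{\merc}$ in the formula \eqref{MMFT}, provided we make a convenient choice of the resolution $\pi_0\colon \widetilde{X}\to X=\CC^n$ of $P^{-1}(0)\cup Q^{-1}(0)$. The plan is to take $\pi_0$ to be (a smoothing of) the toric modification associated to a smooth subdivision $\Sigma^*$ of the dual fan of the Newton polyhedron $\Gamma_+(f)=\Gamma_+(P)+\Gamma_+(Q)$. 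Under the convenience and non-degeneracy assumptions, together with $\Gamma_+(P)\subset\subset \Gamma_+(Q)$, this map is a resolution of $P^{-1}(0)\cup Q^{-1}(0)$ with the polynomial-like property: for every exceptional divisor $D_i$ (corresponding to a ray $\rho_i$ of $\Sigma^*$ in $\Int(\RR^n_+)$) the lattice distance $a_i={\rm ord}_{D_i}(P\circ\pi_0)-{\rm ord}_{D_i}(Q\circ\pi_0)$ is strictly positive. In this setting each stratum $D_I^\circ$ fibers over a torus orbit of $\widetilde{X}$ and is described explicitly by the restrictions of $P^{\gamma(P)}$ and $Q^{\gamma(Q)}$ associated to the compact face $\gamma\prec\Gamma_+(f)$ whose dual cone $\sigma(\gamma)$ contains the cone of $\Sigma^*$ spanned by $\{\rho_i\}_{i\in I}$.

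Next, I would express $[\tl{D_I^\circ\setminus D_P}]$ and $[D_I^\circ\cap D_P]\in\M_{\CC}^{\hat\mu}$ via Bernstein--Khovanskii--Kushnirenko-type toric calculations. The covering $\tl{D_I^\circ\setminus D_P}\to D_I^\circ\setminus D_P$, with its $\mu_{e_I}$-action, is built exactly from the equation $t^{e_I}=g_{1,W}^{-1}$ of \eqref{eq:6-26}, and the relevant $g_{1,W}$ is a monomial in the coordinates of the torus orbit multiplied by the trace of $(P-tQ)\circ\pi$; the non-degeneracy of $f$ implies that this hypersurface in the torus admits the product structure coming from the Cayley trick. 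This is where the Cayley polyhedron $\Gamma_+(P)*\Gamma_+(Q)$ enters: each $\square_\gamma$ is a side face of $\Gamma_+(P)*\Gamma_+(Q)$, and the variety $Z_{\square_\gamma}^*\subset T_{\square_\gamma}$ of $g_\gamma=0$ with its $\mu_{d_\gamma}$-action is exactly the Cayley realization of the $\mu_{e_I}$-coverings above. This identification is the pointwise analogue of \cite[Theorem 4.3 (i)]{M-T-4} and can be carried over verbatim once one checks that the lattice distance $d_\gamma$ and the heights $\height(v,\gamma)$ defined via $\alpha_\gamma$ agree with the ones read off from $a_i$ along the rays of $\Sigma^*$.

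The last step is the combinatorial reorganization. I would group the strata $D_I^\circ$ appearing in \eqref{MMFT} according to the compact face $\gamma\prec\Gamma_+(f)$ they lie over, so that
\begin{equation}
\R_{f,0}^{\merc}=\sum_{\gamma}\sum_{I:\,\sigma_I\subset\sigma(\gamma)}\Bigl\{(1-\LL)^{|I|-1}[\tl{D_I^\circ\setminus D_P}]+(1-\LL)^{|I|}[D_I^\circ\cap D_P]\Bigr\}.
\end{equation}
Using the Cayley description above, the fibration over each torus orbit contributes a factor $(1-\LL)^{m_\gamma}$ from the transverse toric directions, and the inner sum over $I$ collapses to $\chi_h((1-\LL)^{m_\gamma}\cdot[Z_{\square_\gamma}^*])$ modulo contributions with trivial monodromy. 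Finally, the key input from Proposition~\ref{pro-1} and Theorem~\ref{weifilmonofil}, which says that for $\lambda\neq 1$ the compact and non-compact versions agree, kills precisely the summands indexed by $I$ for which the corresponding face lies on the coordinate hyperplanes of $\RR^n$ not meeting $\gamma$; applying this in the $\lambda$-eigenspace yields the stated formula.

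The main obstacle is the bookkeeping in the middle step: matching the unramified $\mu_{e_I}$-coverings $\tl{D_I^\circ\setminus D_P}$ built from the local equations $g_{1,W}$ on the toric resolution with the intrinsic $\mu_{d_\gamma}$-action on $Z_{\square_\gamma}^*$ through the Cayley trick, and checking that the $(1-\LL)^{m_\gamma}$ factor is produced correctly after summing over all cones in $\sigma(\gamma)$. This is essentially a careful transcription of the toric argument of \cite[Section 4]{M-T-4} and \cite[Section 3]{Stapledon} to the Minkowski-sum situation $\Gamma_+(P)+\Gamma_+(Q)$, with the $\lambda\neq 1$ cancellations supplied by Proposition~\ref{ProII} and Proposition~\ref{pro-1}.
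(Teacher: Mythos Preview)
Your overall plan coincides with the paper's: reduce to $\R_{f,0}^{\merc}$ via Theorems~\ref{thm:7-6} and~\ref{thm:7-7}, take $\pi_0$ to be the toric modification associated with a smooth subdivision of the dual fan of $\Gamma_+(f)$, and invoke the argument of \cite[Theorem~4.3]{M-T-4} to regroup the strata $D_I^\circ$ by the compact face $\gamma\prec\Gamma_+(f)$ whose dual cone contains the cone of $I$. Up to this point your sketch is correct and matches the paper.

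The gap is in the last step. What actually falls out of the \cite{M-T-4} argument, face by face, is not $[Z_{\square_\gamma}^*]$ but the open piece
\[
Z_{\square_\gamma}^{\circ}=\Bigl\{(x,t)\in T_\gamma\times\CC^* \ \Big|\ P_\gamma(x)Q_\gamma(x)\neq 0,\ t^{-d_\gamma}=\tfrac{P_\gamma(x)}{Q_\gamma(x)}\Bigr\}\subset Z_{\square_\gamma}^*,
\]
since the coverings $\tl{D_I^\circ\setminus D_P}$ are built on the locus where both $P_\gamma$ and $Q_\gamma$ are invertible. The passage from $Z_{\square_\gamma}^{\circ}$ to $Z_{\square_\gamma}^{*}$ is the whole content of the $\lambda\neq 1$ hypothesis here: one has $[Z_{\square_\gamma}^*]=[Z_{\square_\gamma}^{\circ}]+[Z_\gamma]$ with $Z_\gamma=\{P_\gamma=Q_\gamma=0\}\times\CC^*_t$, and the $\mu_{d_\gamma}$-action on $Z_\gamma$ is rotation in the $\CC^*_t$-factor, hence homotopic to the identity. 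Thus $\chi_h([Z_\gamma])_\lambda=0$ for $\lambda\neq 1$, and the two classes agree after taking $\lambda$-parts. This elementary observation is the paper's entire argument for the cancellation.

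Your attribution of this cancellation to Proposition~\ref{pro-1} and Theorem~\ref{weifilmonofil} is mistaken. Those results compare $\psi_f^{\merc}$ with $\psi_f^{\mer}$ and identify the weight filtration with the monodromy filtration; neither is invoked in the proof of Theorem~\ref{thm:7-12}, and neither says anything about ``summands indexed by $I$ whose face lies on a coordinate hyperplane.'' In fact no such summands appear in $\R_{f,0}^{\merc}$ to begin with, since $I$ already ranges only over exceptional divisors of $\pi_0$, all of which correspond to rays in $\Int(\RR_+^n)$. Replace that paragraph with the $Z_\gamma$ argument above and your proof goes through.
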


\begin{proof}
For a compact face
$\gamma \prec \Gamma_{+}(f)$ of $\Gamma_{+}(f)$ set
$T_{\gamma}:=\Spec
(\CC[ \ZZ^n \cap L( \gamma, Q)]) \simeq
(\CC^*)^{\dim \gamma}$. Then we can
naturally define a Laurent polynomial
$P_{\gamma}(x)$ (resp. $Q_{\gamma}(x)$)
on it whose Newton polytope is $\gamma (P)$
(resp. $\gamma (Q)$) and the non-degenerate
hypersurface $Z_{\square_{\gamma}}^*
\subset T_{\square_{\gamma}} \simeq
(\CC^*)^{\dim \gamma +1}$ is isomorphic to
\begin{equation}
\{ (x,t) \in T_{\gamma} \times \CC^* \ | \
P_{\gamma}(x) t^{d_{\gamma}} -
Q_{\gamma}(x) = 0 \}.
\end{equation}
On the other hand, as in the proof of
\cite[Theorem 4.3]{M-T-4},
we can show that the contribution
to $\chi_h( \mathcal{S}_{f,0}^{\merc})_{\lambda} =
\chi_h( \R_{f,0}^{\merc})_{\lambda}$
for $\lambda \not= 1$ from
the compact face $\gamma$ is equal to
\begin{equation}
\chi_h((1-\LL)^{m_{\gamma}} \cdot
[Z_{\square_{\gamma}}^{\circ}])_{\lambda},
\end{equation}
where we set
\begin{equation}
Z_{\square_{\gamma}}^{\circ}
=
\Big\{ (x,t) \in T_{\gamma} \times \CC^* \ | \
P_{\gamma}(x) \cdot Q_{\gamma}(x) \not= 0,
t^{-d_{\gamma}} =
\frac{P_{\gamma}(x)}{Q_{\gamma}(x)} \Big\}.
\end{equation}
Let us set
\begin{equation}
Z_{\gamma}
=
\{ (x,t) \in T_{\gamma} \times \CC^* \ | \
P_{\gamma}(x) = Q_{\gamma}(x) = 0 \}
\subset Z_{\square_{\gamma}}^*.
\end{equation}
Then we have an equality
\begin{equation}
[Z_{\square_{\gamma}}^{\circ}] =
[Z_{\square_{\gamma}}^*] - [Z_{\gamma}].
\end{equation}
in $\M_{\CC}^{\hat{\mu}}$. Since the
restriction of $l_{\tau_{\gamma}}$ to
$Z_{\gamma}$ is homotopic to the identity,
for $\lambda \not= 1$ we obtain
\begin{equation}
\chi_h((1-\LL)^{m_{\gamma}} \cdot
[Z_{\square_{\gamma}}^{\circ}])_{\lambda}
=
\chi_h((1-\LL)^{m_{\gamma}} \cdot
[Z_{\square_{\gamma}}^{*}])_{\lambda}.
\end{equation}
This completes the proof.
\end{proof}

Now by Theorems \ref{weifilmonofil} and
\ref{thm:7-12} and Remark \ref{NJCF} 
(see also the proof of \cite[Theorem 4.3 (ii)]{M-T-4}),
we obtain the following theorem. 

\begin{theorem}\label{MAIN}
Assume that $\lambda \not= 1$ and $k \geq 1$.
Then the number of the Jordan blocks for the
eigenvalue $\lambda$ with sizes $\geq k$ in
$\Phi_{n-1,0} \colon
H^{n-1}(F_0 ;\CC) \simto
H^{n-1}(F_0 ;\CC)$ is equal to
\begin{equation}
(-1)^{n-1}\sum_{p+q=n-2+k, n-1+k}
\left\{ \sum_{\gamma}
e^{p,q} ( \chi_h ((1-\LL)^{m_{\gamma}} \cdot
[Z_{\square_{\gamma}}^*] ))_{\lambda} \right\},
\end{equation}
where in the sum $\sum_{\gamma}$
the face $\gamma$ of $\Gamma_{+}(f)$
ranges through the comapct ones.
\end{theorem}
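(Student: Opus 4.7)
The plan is to combine Remark~\ref{NJCF} with Theorems \ref{weifilmonofil}, \ref{the-3}, \ref{thm:7-12} and Proposition~\ref{pro-1}. First, by Theorem~\ref{weifilmonofil} the weight filtration on $H^{n-1}(F_0;\CC)_\lambda$ is the monodromy weight filtration centered at $n-1$, so formula \eqref{formulJor} applies: the number of Jordan blocks in $\Phi_{n-1,0}$ for the eigenvalue $\lambda$ of size $\geq k$ equals
\begin{equation}
\dim\GR^{W}_{n-2+k}H^{n-1}(F_{0};\CC)_{\lambda}+\dim\GR^{W}_{n-1+k}H^{n-1}(F_{0};\CC)_{\lambda}.
\end{equation}
Since for any mixed Hodge structure $V$ the graded piece $\GR^{W}_{w}V$ carries a pure Hodge structure of weight $w$, we can rewrite each of these dimensions as
\begin{equation}
\dim\GR^{W}_{w}H^{n-1}(F_{0};\CC)_{\lambda}=\sum_{p+q=w}e^{p,q}([H^{n-1}(F_{0};\CC)_{\lambda}]),
\end{equation}
where $[H^{n-1}(F_{0};\CC)_{\lambda}]$ is viewed as a class in $\KK_0(\HSm)$.

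Next, by the concentration in Theorem~\ref{the-3} the class $[H^{\merc}_{f,0}]_\lambda\in\KK_0(\HSm)$ reduces, for $\lambda\neq 1$, to the single cohomology group $H^{n-1}\psi^{\merc}_{f}(\CC_X)_0$ (with the correct sign $(-1)^{n-1}$). By Proposition~\ref{pro-1} the natural morphism $\psi^{\merc}_{f,\lambda}(\CC_X[n])[-1]\to\psi^{\mer}_{f,\lambda}(\CC_X[n])[-1]$ is an isomorphism, so that as elements of $\KK_0(\HSm)$ (using the mixed Hodge module lifts recalled in Section~\ref{sec:s4})
\begin{equation}
[H^{\merc}_{f,0}]_\lambda=(-1)^{n-1}[H^{n-1}(F_0;\CC)_\lambda]_\lambda.
\end{equation}
Combined with Theorem~\ref{thm:7-12}, this yields
\begin{equation}
[H^{n-1}(F_0;\CC)_\lambda]=(-1)^{n-1}\sum_{\gamma}\chi_h\bigl((1-\LL)^{m_\gamma}\cdot[Z^{*}_{\square_\gamma}]\bigr)_\lambda
\end{equation}
in $\KK_0(\HSm)$, where the sum runs over compact faces $\gamma\prec\Gamma_+(f)$.

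Finally, applying $e^{p,q}(\cdot)_\lambda$ to both sides and summing over $p+q=n-2+k$ and $p+q=n-1+k$ gives
\begin{equation}
\sum_{p+q=n-2+k,\,n-1+k}e^{p,q}([H^{n-1}(F_0;\CC)_\lambda])=(-1)^{n-1}\sum_{p+q=n-2+k,\,n-1+k}\sum_{\gamma}e^{p,q}\bigl(\chi_h((1-\LL)^{m_\gamma}\cdot[Z^{*}_{\square_\gamma}])\bigr)_\lambda.
\end{equation}
By the first step the left-hand side is exactly the number of Jordan blocks for the eigenvalue $\lambda$ of size $\geq k$, which is the claimed formula.

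The only non-routine point is the identification of $[H^{\merc}_{f,0}]_\lambda$ with $(-1)^{n-1}[H^{n-1}(F_0;\CC)_\lambda]$ in $\KK_0(\HSm)$: it must be checked that Proposition~\ref{pro-1} is compatible with the mixed Hodge module structure (i.e.\ that the isomorphism $\psi^{\merc}_{f,\lambda}\simto\psi^{\mer}_{f,\lambda}$ lifts to an isomorphism of the underlying mixed Hodge modules $\psi^{H}_{t,\lambda}({i_f}_!\CC_X^H[n]|_{X\setminus Q^{-1}(0)})\simto\psi^{H}_{t,\lambda}({i_f}_*\CC_X^H[n]|_{X\setminus Q^{-1}(0)})$), but this is exactly what is used in Theorem~\ref{weifilmonofil} through Lemma~\ref{weifillem}, so it is already available. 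The rest of the argument is a transcription of \cite[Theorem 4.3 (ii)]{M-T-4} to the present meromorphic setting.
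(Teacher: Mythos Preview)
Your proof is correct and follows essentially the same approach as the paper, which simply cites Theorems~\ref{weifilmonofil} and~\ref{thm:7-12} together with Remark~\ref{NJCF} (and the proof of \cite[Theorem 4.3 (ii)]{M-T-4}). You have merely made explicit the intermediate steps---the concentration from Theorem~\ref{the-3}, the identification via Proposition~\ref{pro-1}, and the passage from weight-graded dimensions to $e^{p,q}$-numbers---that the paper leaves to the reader.
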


\section{Combinatorial 
descriptions of Jordan normal forms 
and reduced Hodge spectra}\label{sec:7}

In this section, for the meromorphic 
function $f$ we give 
combinatorial descriptions of the Jordan 
normal forms of its Milnor monodromy $\Phi_{n-1, 0}$ for 
the eigenvalues $\lambda \not= 1$ 
and its reduced Hodge spectrum as in 
Matsui-Takeuchi \cite{M-T-4}, 
Stapledon \cite{Stapledon} and 
Saito \cite{Saito}. 

\subsection{Equivariant Ehrhart theory of 
Katz-Stapledon}\label{sbbsec:7}

First we recall some 
polynomials in the Equivariant Ehrhart theory 
of Katz-Stapledon \cite{Ka-St-1} and Stapledon~\cite{Stapledon}. 
Throughout this paper, we regard the empty set 
$\emptyset$ as a $(-1)$-dimensional polytope,
and as a face of any polytope.
Let $P$ be a polytope.
If a subset $F\subset P$ is a face of $P$, we write $F\prec P$.
For a pair of faces $F\prec F' \prec P$ of $P$,
we denote by $[F,F']$ the face poset $\{F''\prec 
P\mid F\prec F''\prec F'\}$,
and by $[F,F']^{*}$ a poset which is equal to 
$[F,F']$ as a set with the reversed order.

\begin{definition}
Let $B$ be a poset $[F,F']$ or $[F,F']^{*}$.
We define a polynomial $g(B,t)$ of degree 
$\leq(\dim F' -\dim F)/2$ as follows.
If $F = F'$, we set $g(B;t)=1$.
If $F \neq F'$ and $B=[F,F']$ (resp. $B=[F,F']^{*}$), 
we define $g(B;t)$ inductively by
\begin{align}
t^{\dim{F'}-\dim{F}}g(B;t^{-1})=\sum_{F''\in[F,F']}
(t-1)^{\dim{F'}-\dim{F''}}g([F,F''];t).
\\ ({\rm resp.}~t^{\dim{F'}-\dim{F}}g(B;t^{-1})=
\sum_{F''\in[F,F']^{*}}(t-1)^{\dim{F''}-\dim{F}}g([F'',F']^{*};t).) 
\end{align}
\end{definition}

In what follows, we assume that $P$ is a lattice polytope in $\RR^n$.
Let $S$ be a subset of $P\cap \ZZ^n$ containing the 
vertices of $P$, and $\omega \colon S\to \ZZ$ be a function.
We denote by $\UH_{\omega}$ the convex hull in $\RR^n\times \RR$ 
of the set $\{(v,s)\in \RR^n\times\RR \mid v\in S, s\geq \omega(v)\}$.
Then, the set of all the projections of the bounded faces of $\UH_{\omega}$ 
to $\RR^n$ defines a lattice polyhedral subdivision $\mathcal{S}$ of $P$.
Here a lattice polyhedral subdivision $\mathcal{S}$ 
of a polytope $P$ is a set of some polytopes in $P$
such that the intersection of any two polytopes in $\mathcal{S}$ is a 
face of both and all vertices of any polytope 
in $\mathcal{S}$ are in $\ZZ^{n}$. 
Moreover, the set of all the bounded faces of $\UH_{\omega}$ defines 
a piecewise $\QQ$-affine convex function $\nu\colon P\to \RR$.
For a cell $F\in\mathcal{S}$, we denote by 
$\sigma(F)$ the smallest face of $P$ containing $F$,
and $\LK_{\mathcal{S}}(F)$ the set of all cells 
of $\mathcal{S}$ containing $F$.
We call $\LK_{\mathcal{S}}(F)$ the link of $F$ in $\mathcal{S}$.
Note that $\sigma(\emptyset)=\emptyset$ and 
$\LK_{\mathcal{S}}(\emptyset)=\mathcal{S}$.

\begin{definition}
For a cell $F\in \mathcal{S}$, the $h$-polynomial 
$h(\LK_{\mathcal{S}}(F);t)$ 
of the link $\LK_{\mathcal{S}}(F)$ of $F$ is defined by
\begin{equation}
t^{\dim{P}-\dim{F}}h(\LK_{\mathcal{S}}(F);t^{-1})=
\sum_{F'\in \LK_{\mathcal{S}}(F)}g([F,F'];t)(t-1)^{\dim{P}-\dim{F'}}.
\end{equation}
The local $h$-polynomial $l_{P}(\mathcal{S},F;t)$ 
of $F$ in $\mathcal{S}$ is defined by
\begin{equation}
l_{P}(\mathcal{S},F;t)=\sum_{\sigma(F)
\prec Q\prec P}(-1)^{\dim{P}-\dim Q}h(
\LK_{{\mathcal{S}}|_{Q}}(F);t) \cdot g([Q,P]^{*};t).
\end{equation}
\end{definition}

For $\lambda\in \CC$ and 
$v\in mP\cap{\ZZ^n}$ ($m\in \ZZ_+:=\ZZ_{\geq 0}$) we set 
\begin{align}
w_{\lambda}(v)=
\left\{
\begin{array}{ll}
1& \Bigl( \exp\ \bigl( 2\pi\sqrt{-1}\cdot m\nu(\frac{v}{m})\bigr)=
\lambda \Bigr) \\\\
0&( \text{otherwise} ). \\
\end{array}
\right.
\end{align}
We define the $\lambda$-weighted Ehrhart polynomial 
$\phi_{\lambda}(P,\nu;m)\in\ZZ[m]$ 
of $P$ with respect to $\nu:P\to \RR$ by
\begin{equation}
\phi_{\lambda}(P,\nu;m):=\sum_{v\in {mP}\cap\ZZ^{n}}w_{\lambda}(v).
\end{equation}
Then $\phi_{\lambda}(P,\nu;m)$ is a polynomial in $m$ 
with coefficients $\ZZ$ 
whose degree is $\leq\dim P$ (see \cite{Stapledon}).  

\begin{definition}(\cite{Stapledon})
\begin{enumerate}
\item[\rm{(i)}] We define the $\lambda$-weighted $h^{*}$-polynomial 
$h^{*}_{\lambda}(P,\nu;u)
\in \ZZ[u]$ by 
\begin{equation}
\sum_{m\geq 0} \phi_{\lambda}(P,\nu;m)u^{m}=
\frac{h^{*}_{\lambda}(P,\nu;u)}{(1-u)^{\dim{P}+1}}.
\end{equation}
If $P$ is the empty polytope, we set $h^{*}_{1}
(P,\nu;u)=1$ and $h^{*}_{\lambda}(P,\nu;u)=0~(\lambda\neq 1)$.
\item[\rm{(ii)}] We define the $\lambda$-local weighted 
$h^{*}$-polynomial $l^{*}_{\lambda}(P,\nu;u)\in\ZZ[u]$ by
\begin{equation}
l^{*}_{\lambda}(P,\nu;u)=\sum_{Q\prec P}
(-1)^{\dim{P}-\dim{Q}}h^{*}_{\lambda}(Q,\nu|_{Q};u)
 \cdot g([Q,P]^{*};u).
\end{equation}
If $P$ is the empty polytope, we set $l^{*}_{1}(P,\nu;u)=1$ 
and $l^{*}_{\lambda}(P,\nu;u)=0~(\lambda\neq 1)$.
\end{enumerate}
\end{definition} 

\begin{definition}(\cite{Stapledon})\label{def:poly}
\begin{enumerate}
\item[\rm{(i)}] We define the $\lambda$-weighted limit mixed 
$h^{*}$-polynomial $h^{*}_{\lambda}(P,\nu;u,v)\in\ZZ[u,v]$ by
\begin{equation}
h^{*}_{\lambda}(P,\nu;u,v):=\sum_{F\in\MCS}v^{\dim{F}+1}
l^{*}_{\lambda}(F,\nu|_{F};uv^{-1}) \cdot h(\LK_{\MCS}(F);uv).
\end{equation}
\item[\rm{(ii)}] We define the $\lambda$-local weighted limit mixed 
$h^{*}$-polynomial $l^{*}_{\lambda}(P,\nu;u,v)\in\ZZ[u,v]$ by
\begin{equation}
l^{*}_{\lambda}(P,\nu;u,v):=\sum_{F\in\MCS}v^{\dim{F}+1}
l^{*}_{\lambda}(F,\nu|_{F};uv^{-1}) \cdot l_{P}(\MCS,F;uv).
\end{equation}
\end{enumerate}
\end{definition}

\subsection{Jordan normal forms and reduced Hodge spectra}\label{sbbsec:8}

Assume that the meromorphic function $f(x)= \frac{P(x)}{Q(x)}$ is 
non-degenerate at the origin $0 \in X= \CC^n$ 
and $P(x), Q(x)$ are convenient. For $f$ to be 
polynomial-like, we assume moreover that 
$\Gamma_+(P)$ is properly contained in 
$\Gamma_+(Q)$: $\Gamma_+(P) \subset 
\subset \Gamma_+(Q)$ (see Definition \ref{def-29}). 
We call the union of the compact faces of 
$\Gamma_+(P)$ (resp. $\Gamma_+(Q)$) the Newton boundary of 
$P$ (resp. $Q$) and denote it by $\Gamma_P$ 
(resp. $\Gamma_Q$). 
Denote by $K$ the convex hull 
of the closure of 
$\Gamma_+(Q) \setminus \Gamma_+(P)$ in $\RR^n$ and
define a piecewise $\QQ$-affine function 
$\nu$ on $K$ which takes the value $1$ (resp. $0$)
on $\Gamma_Q \subset \RR^n$ 
(resp. on the convex hull of 
$\Gamma_P \subset \RR^n$) such that 
for any compact face $\gamma$ of 
$\Gamma_+(f)$ 
the restriction of $\nu$ to 
$\square_{\gamma}$ is an affine function. 
For $\lambda \in \CC$ we define 
the equivariant Hodge-Deligne polynomial for the eigenvalue $\lambda$ 
(of the mixed Hodge structures of the cohomology groups of the Milnor 
fiber $F_{0}$) $E_{\lambda}(F_{0};u,v) \in \ZZ [u,v]$ by 
\begin{equation}
E_{\lambda}(F_{0};u,v)= 
\sum_{p,q\in\ZZ}
\sum_{j\in\ZZ}(-1)^{j}h^{p,q}_{\lambda}(H^{j}(F_{0};\CC))
u^pv^q\in\ZZ[u,v],
\end{equation}
where $h^{p,q}_{\lambda}(H^{j}(F_{0};\CC))$ is the dimension of 
$\GR^{p}_{F}\GR^{W}_{p+q}H^{j}(F_{0};\CC)_{\lambda}$. 
Then for $\lambda \not= 1$, as in \cite{M-T-4}, 
\cite{Stapledon} and \cite{Saito}, 
by Theorem \ref{thm:7-12} we can 
calculate the $\lambda$-part of  
the Hodge realization 
of the motivic Milnor fiber 
$\mathcal{S}^{\merc}_{f,0}$ of $f$ and 
obtain the following formula for 
$E_{\lambda}(F_{0};u,v)$. 

\begin{theorem}\label{the-3214} 
In the situation as above, for any $\lambda \not= 1$ 
we have 
\begin{equation}
uv E_{\lambda}(F_{0};u,v)
=(-1)^{n-1}l^{*}_{\lambda}(K,\nu;u,v). 
\end{equation}
\end{theorem}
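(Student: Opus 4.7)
The plan is to combine Theorem \ref{thm:7-12}, which gives a motivic decomposition of the Milnor fiber indexed by the compact faces of the Newton polyhedron $\Gamma_+(f)$, with the equivariant Ehrhart theory of Stapledon \cite{Stapledon} for non-degenerate hypersurfaces in algebraic tori, and to identify the resulting sum with the local weighted $h^*$-polynomial $l^*_\lambda(K,\nu;u,v)$.

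First I would use Proposition \ref{pro-1} together with Theorem \ref{weifilmonofil}: for $\lambda\not=1$ the cohomology of $F_0$ is concentrated in degree $n-1$, and the equivariant Hodge--Deligne polynomial of $[H^{\merc}_{f,0}]_\lambda$ reproduces $E_\lambda(F_0;u,v)$. Applying Theorem \ref{thm:7-12} then gives
\begin{equation}
E_\lambda(F_0;u,v) \ =\ \sum_{\gamma} (1-uv)^{m_\gamma}\, E([Z^*_{\square_\gamma}];u,v)_\lambda,
\end{equation}
where $\gamma$ ranges over compact faces of $\Gamma_+(f)$ and $Z^*_{\square_\gamma}\subset T_{\square_\gamma}$ carries the $\mu_{d_\gamma}$-action generated by multiplication by $\tau_\gamma$.

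Next, for each $\gamma$ I would compute $E([Z^*_{\square_\gamma}];u,v)_\lambda$ by the Danilov--Khovanskii formula for non-degenerate hypersurfaces in tori, in the $\lambda$-weighted equivariant form developed by Stapledon. The essential geometric observation is that $\square_\gamma$, together with the affine function $\nu|_{\square_\gamma}$ taking values $0$ on $\gamma(P)$ and $1$ on $\gamma(Q)$, exhibits $Z^*_{\square_\gamma}$ as a $d_\gamma$-cyclic cover whose induced $\mu_{d_\gamma}$-action matches the one defined by $\tau_\gamma$. Under this identification $E([Z^*_{\square_\gamma}])_\lambda$ becomes expressible through the $\lambda$-weighted mixed invariants $h^*_\lambda$ and $l^*_\lambda$ of $(\square_\gamma,\nu|_{\square_\gamma})$ and its subfaces, following Stapledon's template.

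Finally I would reorganize the resulting double sum (over $\gamma$ and over sub-cells $F\prec\square_\gamma$) by exchanging the order of summation. Since the cells of the polyhedral subdivision $\mathcal{S}$ of $K$ induced by the piecewise $\QQ$-affine function $\nu$ are precisely the polytopes $\square_\gamma$ for compact $\gamma\prec\Gamma_+(f)$, the links $\LK_{\mathcal{S}}(F)$ appearing in Definition \ref{def:poly} arise naturally from the combinatorics of how each $F$ sits inside the various $\square_\gamma$'s. Using the standard identities relating $h$-, $g$- and local $h$-polynomials (see \cite{Ka-St-1}), the sum reassembles into $(-1)^{n-1}l^*_\lambda(K,\nu;u,v)/(uv)$, yielding the claimed identity. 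The main obstacle will be this last combinatorial bookkeeping step: one must match the boundary contributions from $\Gamma_P$ (on which $\nu=0$) and from $\Gamma_Q$ (on which $\nu=1$) against the $l^*_\lambda(F,\nu|_F;uv^{-1})$ and $l_{K}(\mathcal{S},F;uv)$ terms of Definition \ref{def:poly}, and verify that the assumption $\Gamma_+(P)\subset\subset\Gamma_+(Q)$ together with $\lambda\not=1$ forces all extraneous terms to cancel, so that the transition from the polynomial case of \cite{M-T-4,Stapledon,Saito} to the present polynomial-like rational setting is consistent.
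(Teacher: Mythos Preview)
Your proposal is correct and follows essentially the same route the paper indicates. The paper itself does not give a detailed proof of Theorem~\ref{the-3214}; it simply states that ``as in \cite{M-T-4}, \cite{Stapledon} and \cite{Saito}, by Theorem~\ref{thm:7-12}'' one computes the $\lambda$-part of the Hodge realization of $\mathcal{S}^{\merc}_{f,0}$ and obtains the formula. Your outline---apply Theorem~\ref{thm:7-12} to express $E_\lambda(F_0;u,v)$ as a sum over compact faces $\gamma$ of $(1-uv)^{m_\gamma}$ times the equivariant Hodge--Deligne polynomial of $Z^*_{\square_\gamma}$, then invoke Stapledon's equivariant Danilov--Khovanskii formula for each $Z^*_{\square_\gamma}$ in terms of the weighted invariants of $(\square_\gamma,\nu|_{\square_\gamma})$, and finally reorganize the sum into $l^*_\lambda(K,\nu;u,v)$ using the Katz--Stapledon identities---is exactly how the cited references carry out the analogous computation, transported to the polynomial-like rational setting via the identification of the cells of $\mathcal{S}_\nu$ with the $\square_\gamma$.
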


Let $\mathcal{S}_{\nu}$ be the polyhedral 
subdivision of the polytope $K$ defined by $\nu$. 
By the definition of the $h^*$-polynomial, 
for $\lambda \not= 1$ we have
\begin{equation}
l^{*}_{\lambda}(K,\nu;u,u)=
\sum_{\gamma \prec \Gamma_+(f) :{\rm compact}}
u^{\dim \square_{\gamma} +1}
l^{*}_{\lambda}( \square_{\gamma} ,\nu;1)
\cdot l_{K}( \mathcal{S}_{\nu}, \square_{\gamma} ;u^2),
\end{equation}
where in the sum $\Sigma$ the face $\gamma$ 
ranges through the compact ones of $\Gamma_+(f)$.
The polynomial $l_{K}( \mathcal{S}_{\nu}, \square_{\gamma} ;t)$ 
is symmetric and unimodal centered at 
$(n-\dim \gamma -1)/2$, i.e. if $a_{i}\in \ZZ$ 
is the coefficient of $t^i$ in $l_{K}
( \mathcal{S}_{\nu}, \square_{\gamma} ;t)$ we have 
$a_{i}=a_{n-\dim \gamma -1-i}$ and $a_{i}\leq a_{j}$ 
for $0\leq i\leq j\leq (n-\dim \gamma -1)/2$.
Therefore, it can be expressed in the form
\begin{equation}
l_{K}( \mathcal{S}_{\nu}, \square_{\gamma} ;t)=
\sum_{i=0}^{\lfloor (n-1-\dim \gamma )/2\rfloor}
\tl{l}_{\gamma ,i}(t^i+t^{i+1}+\dots+t^{n-1-\dim \gamma -i}),
\end{equation}
for some non-negative integers $\tl{l}_{{\gamma},i}\in\ZZ_{\geq 0}$.
We set
\begin{equation}
\tl{l}_{K}( \mathcal{S}_{\nu}, \square_{\gamma} ,t):=
\sum_{i=0}^{\lfloor (n-1-\dim \gamma )/2\rfloor}
\tl{l}_{\gamma,i}t^i.
\end{equation}
For $k\in\ZZ_{> 0}$ and $\lambda \in\CC$ 
we denote by $J_{k,\lambda}$ the number of 
the Jordan blocks in $\Phi_{n-1, 0}$ with size $k$ 
for the eigenvalue $\lambda$.
Then we obtain the following formula for them.

\begin{corollary}\label{jordan}
In the situation as above, 
for any $\lambda \not= 1$ we have
\begin{equation}
\sum_{0\leq k\leq n-1}J_{n-k,\lambda}u^{k+2}
=
\sum_{\gamma \prec \Gamma_+(f) :{\rm compact}}
u^{\dim \square_{\gamma} +1}l^{*}_{\lambda}
(\square_{\gamma},\nu;1)\cdot\tl{l}_{K}( \mathcal{S}_{\nu},
\square_{\gamma} ;u^2),
\end{equation}
where in the sum $\Sigma$ of the right 
hand side the face $\gamma$ ranges through 
the compact ones of $\Gamma_+(f)$.
\end{corollary}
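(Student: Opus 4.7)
The plan is to deduce the corollary from Theorem~\ref{the-3214} by combining the monodromy weight filtration structure of Theorem~\ref{weifilmonofil} with the unimodal decomposition of $l_K(\mathcal{S}_\nu, \square_\gamma;\,\cdot\,)$ quoted in the paragraph preceding the statement. The key observation is that both sides of Theorem~\ref{the-3214} (after specializing $v = u$) become sums of palindromic polynomials sharing a common center, so one can extract the Jordan numbers $J_{k, \lambda}$ by linear independence of palindromes of distinct sizes.

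First I would set up the identity
$$\sum_w \( \dim \GR^W_w H^{n-1}(F_0; \CC)_\lambda \) u^{w+2} \;=\; l^*_\lambda(K, \nu; u, u),$$
obtained by specializing Theorem~\ref{the-3214} at $v = u$ and using the concentration of Theorem~\ref{the-3} to reduce $E_\lambda(F_0; u, v)$ to its degree $n-1$ component. Theorem~\ref{weifilmonofil} then identifies the left-hand side as a sum of palindromes indexed by Jordan blocks: a block of size $k$ for the eigenvalue $\lambda$ contributes basis vectors of weights $n-k, n-k+2, \ldots, n-2+k$, yielding a summand $J_{k, \lambda} \cdot u^{n-k+2}(1 + u^2 + \cdots + u^{2(k-1)})$, a palindrome centered at $u^{n+1}$. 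On the right-hand side, I would expand via the formula for $l^*_\lambda(K, \nu; u, u)$ displayed just before the statement and substitute the unimodal decomposition $l_K(\mathcal{S}_\nu, \square_\gamma; t) = \sum_i \tl{l}_{\gamma, i}(t^i + \cdots + t^{n-1-\dim\gamma-i})$ at $t = u^2$. Using $\dim \square_\gamma = \dim \gamma + 1$, this produces
$$l^*_\lambda(K, \nu; u, u) = \sum_{\gamma, i} l^*_\lambda(\square_\gamma, \nu; 1) \cdot \tl{l}_{\gamma, i} \cdot u^{\dim\gamma + 2 + 2i}\(1 + u^2 + \cdots + u^{2(n - \dim\gamma - 2i - 1)}\),$$
again a sum of palindromes centered at $u^{n+1}$, since the local $h$-polynomial is symmetric around $(n - 1 - \dim\gamma)/2$.

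By the linear independence of palindromes $u^a(1 + u^2 + \cdots + u^{2(b-1)})$ of distinct sizes $b \geq 1$ centered at $u^{n+1}$, matching the coefficient of the size-$b$ palindrome yields
$$J_{b, \lambda} = \sum_{\gamma,\, i :\, n - \dim\gamma - 2i = b} l^*_\lambda(\square_\gamma, \nu; 1) \cdot \tl{l}_{\gamma, i}.$$
Setting $b = n - k$, multiplying by $u^{k+2}$, and summing over $0 \leq k \leq n - 1$, the right-hand side reassembles as
$$\sum_{\gamma, i} l^*_\lambda(\square_\gamma, \nu; 1) \cdot \tl{l}_{\gamma, i} \cdot u^{\dim\gamma + 2 + 2i} = \sum_\gamma u^{\dim\square_\gamma + 1} l^*_\lambda(\square_\gamma, \nu; 1) \cdot \tl{l}_K(\mathcal{S}_\nu, \square_\gamma; u^2),$$
since $\tl{l}_K$ records exactly the leading monomial coefficients of the palindromic decomposition of $l_K$. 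The main delicate point is identifying that both palindromic decompositions are centered at the same monomial $u^{n+1}$, which reflects, on one side, the monodromy weight filtration being centered at $n-1$ (shifted by the $uv$ prefactor in Theorem~\ref{the-3214}) and, on the other, the classical symmetry of the local $h$-polynomial combined with the prefactor $u^{\dim\gamma + 2}$; once this centering is observed, linear independence of palindromes reduces the result to an elementary matching argument.
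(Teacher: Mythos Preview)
Your proof is correct and follows precisely the argument the paper has in mind: the paper does not spell out a proof of this corollary, but the paragraph preceding it (the expansion of $l^*_\lambda(K,\nu;u,u)$ and the unimodal decomposition of $l_K(\mathcal{S}_\nu,\square_\gamma;t)$) together with Remark~\ref{NJCF} (the formula $J_{k,\lambda}=\dim\GR^W_{n-k}-\dim\GR^W_{n-k-2}$, which is exactly the ``extract the palindrome coefficients'' step you perform) constitute the intended derivation. Your explicit verification that both sides are sums of palindromes centered at $u^{n+1}$, followed by matching via linear independence, is a clean way to package this; the only cosmetic difference is that the paper would phrase the extraction via the difference formula in Remark~\ref{NJCF} rather than invoking linear independence of palindromes, but the two are equivalent.
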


Let $q_1,\ldots,q_l$ (resp. $\gamma_1,\ldots, 
\gamma_{l^{\prime}}$) be the 
$0$-dimensional (resp. $1$-dimensional) 
faces of $\Gamma_{+}(f)$ such 
that $q_i\in \Int ( \RR_+^n )$ (resp. 
the relative interior 
$\relint(\gamma_i)$ of $\gamma_i$ is 
contained in $\Int( \RR_+^n )$). 
For each $q_i$ (resp. $\gamma_i$), 
we set $d_i:=d_{q_i} >0$ (resp. $e_i:=
d_{\gamma_i}>0$). Moreover, for $\lambda 
 \not= 1$ and $1 \leq 
i \leq l^{\prime}$ such that $\lambda^{e_i}=1$ we set
\begin{align}
n(\lambda)_i
 := & \sharp\{ v\in \ZZ^n \cap \relint ( \square_{\gamma_i} ) 
\ |\ \height (v)_i =k\} 
\nonumber 
\\
 & +\sharp \{ v\in \ZZ^n \cap \relint ( \square_{\gamma_i} ) 
\ |\ \height (v)_i=e_i-k\},
\end{align}
where $k$ is the minimal positive 
integer satisfying 
$\lambda=\zeta_{e_i}^{k}$ 
and for $v\in \ZZ^n \cap \relint 
( \square_{\gamma_i} )$ we 
denote by $\height (v)_i>0$ the 
lattice height of $v$ from the hyperplane 
$H( \gamma_i, Q) \subset 
{\rm Aff}( \square_{\gamma_i} )$. 
Then we have the following generalization of 
\cite[Theorem 4.4]{M-T-4} to polynomial-like 
rational functions. 

\begin{theorem}\label{NJBS} 
In the situation as above, for any 
$\lambda \not= 1$ we have 
\begin{enumerate}
\item[\rm{(i)}]  The number of the Jordan blocks 
for the eigenvalue $\lambda$ with the 
maximal possible size $n$ in 
$\Phi_{n-1,0}: H^{n-1}(F_0; \CC ) \simto H^{n-1}(F_0; \CC )$ 
is equal to $\sharp \{q_i \ |\ \lambda^{d_i}=1\}$.
\item[\rm{(ii)}]  The number of the Jordan blocks for 
the eigenvalue $\lambda$ with the second maximal possible size 
$n-1$ in $\Phi_{n-1,0}$ is 
equal to $\sum_{i \colon \lambda^{e_i}=1} 
n(\lambda)_i$.
\end{enumerate}
\end{theorem}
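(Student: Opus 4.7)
The plan is to apply Corollary~\ref{jordan} and extract the coefficients of $u^{2}$ and $u^{3}$ from its right-hand side, giving $J_{n,\lambda}$ and $J_{n-1,\lambda}$ respectively. Since $\dim \square_{\gamma}+1=\dim \gamma +2$, only vertices of $\Gamma_+(f)$ can contribute to the coefficient of $u^{2}$ and only $1$-dimensional faces to the coefficient of $u^{3}$; in both cases one picks off the constant term $\tl{l}_{\gamma,0}=l_{K}(\MCS_{\nu},\square_{\gamma};0)$. Using the standard fact from the theory of local $h$-polynomials that $l_{K}(\MCS_{\nu},F;t)$ vanishes identically when $F$ is a boundary cell of $K$, together with the observation that $\square_{\gamma}$ is an interior cell of $(K,\MCS_{\nu})$ precisely when $\gamma$ (resp.\ $\relint \gamma$) lies in $\Int(\RR_+^{n})$, and a direct local computation showing $l_{K}(\MCS_{\nu},\square_{\gamma};0)=1$ in the interior case, only the faces $q_i$ and $\gamma_i$ listed in the theorem survive in the two coefficient sums.

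For part (i), the segment $\square_{q_{i}}$ has lattice length $d_{i}$, with $\nu$ taking values $0$ and $1$ at its two endpoints, and a direct Ehrhart computation gives $\phi_{\lambda}(\square_{q_i},\nu;m)=m$ for $m\geq 1$ when $\lambda^{d_i}=1$ (and $0$ otherwise), hence $h^{*}_{\lambda}(\square_{q_{i}},\nu;u)=u$ or $0$. Because $\nu$ is integer-valued at both endpoints, the face corrections in the definition of $l^{*}_{\lambda}$ all vanish for $\lambda\neq 1$, so $l^{*}_{\lambda}(\square_{q_{i}},\nu;1)=1$ exactly when $\lambda^{d_i}=1$, and summing over the interior vertices $q_1,\ldots,q_l$ yields (i).

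For part (ii), the Cayley polytope $\square_{\gamma_{i}}$ is a $2$-dimensional trapezoid or triangle with $\nu=0$ on $\gamma_{i}(P)$ and $\nu=1$ on $\gamma_{i}(Q)$, whose two slanted edges have lattice lengths $d_e$ dividing $e_{i}$. Unwinding $l^{*}_{\lambda}(\square_{\gamma_{i}},\nu;u)$ face by face, the flat edges $\gamma_{i}(P),\gamma_{i}(Q)$ and all vertices of $\square_{\gamma_i}$ contribute $0$ for $\lambda\neq 1$, each slanted edge satisfying $\lambda^{d_{e}}=1$ contributes $-u$ via (i), and at $u=1$ the top term $h^{*}_{\lambda}(\square_{\gamma_{i}},\nu;1)$ is computed from the leading coefficient of the quadratic polynomial $\phi_{\lambda}(\square_{\gamma_i},\nu;m)$ to equal $2\,\Vol(\square_{\gamma_i})/e_i=L_P+L_Q$ when $\lambda^{e_i}=1$, where $L_P,L_Q$ denote the lattice lengths of $\gamma_{i}(P),\gamma_{i}(Q)$. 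This yields $l^{*}_{\lambda}(\square_{\gamma_{i}},\nu;1)=L_P+L_Q-S$, where $S$ is the number of slanted edges with $\lambda^{d_e}=1$. Combining this with the combinatorial identity $L_P+L_Q+S=N(k)+N(e_i-k)$ (where $N(h)$ is the number of lattice points of $\square_{\gamma_i}$ at height $h$), and the fact that the $2S$ lattice points on slanted edges at heights $k$ and $e_i-k$ lie on the boundary of $\square_{\gamma_i}$, gives $l^{*}_{\lambda}(\square_{\gamma_{i}},\nu;1)=n(\lambda)_{i}$. Summing over the interior edges yields (ii).

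The main obstacle will be the combinatorial identity $L_P+L_Q+S=N(k)+N(e_i-k)$ for the Cayley polytope $\square_{\gamma_i}$: one must verify via direct lattice counting on this $2$-dimensional polytope that the sum of cross-section lattice counts at heights $k$ and $e_i-k$ reconstructs the endpoint-length sum plus the slanted-edge correction count, treating trapezoidal and triangular cases uniformly and tracking the divisibility conditions $d_e\mid e_i$ and $\lambda^{d_e}=1$. The vanishing of contributions from boundary faces of $\Gamma_+(f)$ in both (i) and (ii) is a routine consequence of the standard boundary-vanishing of local $h$-polynomials in a lattice polyhedral subdivision, and the preliminary identification $l_{K}(\MCS_{\nu},\square_{\gamma};0)=1$ for interior $\gamma$ is a local calculation on the Cayley-like subdivision $\MCS_{\nu}$.
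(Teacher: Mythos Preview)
Your approach via Corollary~\ref{jordan} is sound and is a natural reading of the paper's setup. The paper itself does not spell out a proof of Theorem~\ref{NJBS}; it simply cites \cite[Theorem~4.4]{M-T-4}. In that reference the argument runs through (the analogue of) Theorem~\ref{MAIN} rather than the Stapledon polynomials: one computes the extremal numbers $e^{n-1,n-1}$ and $e^{n-2,n-1}+e^{n-1,n-2}$ of $\chi_h((1-\LL)^{m_\gamma}[Z^*_{\square_\gamma}])_\lambda$ directly from Danilov--Khovanskii--type formulas for the Hodge--Deligne polynomial of a non-degenerate toric hypersurface, and the factor $(1-\LL)^{m_\gamma}$ forces $s_\gamma=n$, i.e.\ $\relint(\gamma)\subset\Int(\RR_+^n)$, by a pure degree count. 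Your route via Corollary~\ref{jordan} reaches the same endpoint by Ehrhart combinatorics; the tradeoff is that you replace the geometric input (Hodge numbers of $Z^*_{\square_\gamma}$) with the lattice identity you flag for part~(ii), which is indeed the crux and is correct (your reduction to $N(k)+N(e_i-k)=L_P+L_Q+S$ checks out once one notes that a slanted edge of lattice length $d_e$ contributes a lattice point at height $h$ exactly when $(e_i/d_e)\mid h$, which for $h=k$ is equivalent to $\lambda^{d_e}=1$).

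One small correction: the claim that $l_K(\MCS_\nu,\square_\gamma;t)$ vanishes \emph{identically} for boundary cells is stronger than you need and not obviously true in this generality. What you actually use is only $\tl{l}_{\gamma,0}=l_K(\MCS_\nu,\square_\gamma;0)$, and this constant term is
\[
\sum_{\sigma(\square_\gamma)\prec Q\prec K}(-1)^{\dim K-\dim Q}\,h(\LK_{\MCS_\nu|_Q}(\square_\gamma);0)\,g([Q,K]^*;0)
=\sum_{\sigma(\square_\gamma)\prec Q\prec K}(-1)^{\dim K-\dim Q},
\]
which is $1$ if $\sigma(\square_\gamma)=K$ and $0$ otherwise by the Euler relation on the face interval. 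Since $\square_\gamma$ meets both $\Gamma_P$ and $\Gamma_Q$, one has $\sigma(\square_\gamma)=K$ precisely when $\square_\gamma$ avoids every coordinate hyperplane, i.e.\ when $\relint(\gamma)\subset\Int(\RR_+^n)$, exactly as you want.
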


For a compact face $\gamma \prec \Gamma_+(f)$ 
we define a Puiseux 
series $h_{\gamma}(t)$ with coefficients in $\ZZ$ by 
\begin{equation}
h_{\gamma}(t)
= \sum_{\beta \in (\QQ \setminus \ZZ) \cap (0,+ \infty )}
\Bigl\{ 
\phi_{\exp(2\pi\sqrt{-1}\beta)}( \square_{\gamma}, \nu ; 
\lfloor{\beta}\rfloor +1) - 
\phi_{\exp(2\pi\sqrt{-1}\beta)}( \square_{\gamma}, \nu ; 
\lfloor{\beta}\rfloor )
\Bigr\} 
t^{\beta}. 
\end{equation} 

Then as in \cite[Theorem 5.16]{M-T-3} (or 
\cite[Corollary 5.3]{Saito}), by Theorem \ref{thm:7-12} 
(or Theorem \ref{the-3214}) we obtain the following 
result. 

\begin{corollary}\label{spfor}
In the situation as above, we have
\begin{equation}
\tl{\rm sp}_{f,0}(t) = 
\sum_{\gamma \prec \Gamma_+(f) :{\rm compact}}
(-1)^{n-1-\dim \gamma} (1-t)^{s_{\gamma}}
\cdot h_{\gamma}(t), 
\end{equation}
where in the sum $\Sigma$ of the right 
hand side the face $\gamma$ ranges through 
the compact ones of $\Gamma_+(f)$.
\end{corollary}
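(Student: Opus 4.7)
The strategy is to follow \cite[Theorem 5.16]{M-T-3} and \cite[Corollary 5.3]{Saito}, with Theorem~\ref{thm:7-12} (equivalently Theorem~\ref{the-3214}) serving as the key input specific to the meromorphic setting.

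First I would reduce the computation to that of the Hodge--Deligne polynomial $E_\lambda(F_0; u, v)$ at $v = 1$. By Theorem~\ref{the-3} and Theorem~\ref{weifilmonofil}, for $\lambda \neq 1$ only $H^{n-1}(F_0;\CC)_\lambda$ is non-zero, so
\begin{equation*}
(-1)^{n-1} E_\lambda(F_0; u, 1) = \sum_{p\geq 0} \dim \GR^p_F H^{n-1}(F_0; \CC)_\lambda \cdot u^p.
\end{equation*}
Writing each $\alpha \in (\QQ\setminus\ZZ)\cap(0,n)$ uniquely as $\alpha = p + \beta$ with $\beta = \{\alpha\}\in(0,1)\cap\QQ$ and $p = \lfloor\alpha\rfloor \in \{0, \ldots, n-1\}$, this yields
\begin{equation*}
\tl{\rm sp}_{f,0}(t) = (-1)^{n-1}\sum_{\beta\in(0,1)\cap\QQ} t^\beta \cdot E_{\exp(2\pi\sqrt{-1}\beta)}(F_0; t, 1).
\end{equation*}

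Next, applying the Hodge characteristic $\chi_h$ to Theorem~\ref{thm:7-12} gives
\begin{equation*}
E_\lambda(F_0; u, v) = \sum_{\gamma\text{ compact}} (1-uv)^{m_\gamma}\,E_\lambda([Z^*_{\square_\gamma}]; u, v),
\end{equation*}
since $\chi_h((1-\LL)^{m_\gamma}) = (1-uv)^{m_\gamma}$; setting $v=1$ produces a factor $(1-t)^{m_\gamma}$. The central computation is then the $\lambda$-equivariant Hodge--Deligne polynomial of the non-degenerate hypersurface $Z^*_{\square_\gamma}$ in $T_{\square_\gamma}\simeq(\CC^*)^{\dim\gamma + 1}$, which carries a $\mu_{d_\gamma}$-action by $l_{\tau_\gamma}$. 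Since $\tau_\gamma$ corresponds to the character $v\mapsto\zeta_{d_\gamma}^{\height(v,\gamma)}$ and $\height(v,\gamma)=m\nu(v/m)$ for $v\in m\square_\gamma\cap\ZZ^n$, the equivariant Danilov--Khovanskii lattice-point formula identifies $E_\lambda([Z^*_{\square_\gamma}]; t, 1)$, up to sign $(-1)^{\dim\gamma}$ and factor $(1-t)^{\dim\gamma+1}$, with a generating function built from the $\lambda$-weighted Ehrhart polynomials $\phi_\lambda(\square_\gamma, \nu; m)$.

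Combining these pieces, the contribution of each face $\gamma$ to $\tl{\rm sp}_{f,0}(t)$ becomes $(-1)^{n-1-\dim\gamma}(1-t)^{m_\gamma + \dim\gamma + 1}h_\gamma(t) = (-1)^{n-1-\dim\gamma}(1-t)^{s_\gamma}h_\gamma(t)$, using $m_\gamma + \dim\gamma + 1 = s_\gamma$. The differences $\phi_\lambda(\square_\gamma, \nu; \lfloor\beta\rfloor+1) - \phi_\lambda(\square_\gamma, \nu; \lfloor\beta\rfloor)$ in the definition of $h_\gamma(t)$ emerge after multiplying the series $\sum_{m\geq 0} \phi_\lambda(\square_\gamma, \nu; m) t^m$ by a factor of $(1-t)/t$ that appears when one extracts the $t^\alpha$-coefficient and groups by fractional parts $\{\alpha\} = \beta$ (here one uses $\phi_\lambda(\square_\gamma,\nu;0)=0$ for $\lambda\neq 1$). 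The main obstacle is the equivariant Danilov--Khovanskii step: tracking the $\mu_{d_\gamma}$-action on the Hodge filtration of the primitive cohomology of $Z^*_{\square_\gamma}$ and identifying the $\lambda$-isotypic component with lattice points of dilations of $\square_\gamma$ weighted by $\nu$. The Cayley trick, which realizes $Z^*_{\square_\gamma}$ via the Cayley polyhedron $\Gamma_+(P)*\Gamma_+(Q)$, is essential, and the hypothesis $\Gamma_+(P)\subset\subset\Gamma_+(Q)$ ensures that $\square_\gamma$ arises as a genuine $(\dim\gamma+1)$-dimensional side face so the weighting by $\nu$ matches $\height(\cdot,\gamma)$.
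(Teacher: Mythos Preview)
Your proposal is correct and follows precisely the approach the paper indicates: the paper does not give a proof but simply refers to \cite[Theorem 5.16]{M-T-3} and \cite[Corollary 5.3]{Saito}, with Theorem~\ref{thm:7-12} (equivalently Theorem~\ref{the-3214}) as the new input, and your outline unpacks exactly that argument. In particular, your use of the concentration and monodromy-weight results to reduce to $E_{\lambda}(F_0;t,1)$, the identity $m_{\gamma}+\dim\gamma+1=s_{\gamma}$, and the equivariant Danilov--Khovanskii computation for $Z^{*}_{\square_{\gamma}}$ are all the expected steps.
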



\section{Monodromies at infinity of rational functions}\label{sec:9}

In this section, we consider monodromies at infinity of 
rational functions. Let $X$ be a smooth and connected algebraic 
variety over $\CC$ and $P(x), Q(x)$ 
regular functions on it. Assume that $Q(x)$ 
is not identically zero on 
$X$. We define a rational function $f(x)$ on 
$X$ by 
\begin{equation}
f(x)= \frac{P(x)}{Q(x)}  \qquad (x \in X). 
\end{equation}
Then there exists a fnite subset $B \subset \CC$ 
such that $f: X \setminus Q^{-1}(0) 
\rightarrow \CC$ induces a locally trivial fibration 
\begin{equation}
f^{-1}(\CC \setminus B) \rightarrow  \CC \setminus B. 
\end{equation}
The smallest finite subset $B \subset \CC$ 
satisfying this property is called the 
bifurcation set of $f$. For the study of 
such subsets for regular and rational functions, 
see e.g. \cite{B-P}, 
\cite{C-D-T-T}, \cite{G-L-M-new-new}, 
\cite{Thang}, \cite{Nguyen}, \cite{N-S-T},  
\cite{Tak}, \cite{Zaharia} etc. 
For large enough $R \gg 0$ let
\begin{equation}
\Phi_{j}^{\infty}: H^{j}(f^{-1}(R); \CC )
\simto H^{j}(f^{-1}(R); \CC ) \qquad (j \in \ZZ ) 
\end{equation}
be the monodromy operators associated 
to the preceding fibration. 
Then we define 
the monodromy zeta function $\zeta_{f}^{\infty}(t) \in \CC (t)$ 
at infinity of $f$ by 
\begin{equation}
\zeta_{f}^{\infty}(t) = \prod_{j \in \ZZ} 
\Bigl\{ {\rm det}( \id - t \Phi_{j}^{\infty}) 
\Bigr\}^{(-1)^j}  \in \CC (t). 
\end{equation}
We have the following 
global analogue for $\zeta_{f}^{\infty}(t)$ of 
A'Campo's formula in \cite{Acampo}, 
which could be deduced also from the proof of 
\cite[Theorem 1]{G-L-M-new} and the 
results in \cite[Section 6.1]{Dimca}. Here we give a 
short proof 
to it for the reader's convenience. 

\begin{theorem}\label{the-51} 
Assume that the hypersurfaces $P^{-1}(0)$ and 
$Q^{-1}(0)$ are smooth and intersect 
transversally in $X$. 
Let $\overline{X} \supset X$ be a smooth compactification of 
$X$ such that for the complement 
$D:= \overline{X} \setminus X$ the 
union $D \cup P^{-1}(0) \cup Q^{-1}(0) \subset \overline{X}$ 
is a strict normal crossing divisor in $\overline{X}$. 
Let $D= \cup_{i=1}^r D_i$ be the irreducible 
decomposition of $D$. For $1 \leq i \leq r$ set 
\begin{equation}
D_i^{\circ}= D_i \setminus (\cup_{j \not= i} D_j 
\cup P^{-1}(0) \cup Q^{-1}(0))
\end{equation}
and 
\begin{equation}
l_i= {\rm ord}_{D_i} (Q) - 
{\rm ord}_{D_i} (P) \in \ZZ. 
\end{equation}
Then we have 
\begin{equation}
\zeta_{f}^{\infty}(t) = 
(1-t)^{\chi (Q^{-1}(0) \setminus P^{-1}(0))} \cdot 
\Bigl\{ \prod_{i: l_i >0} 
(1- t^{l_i})^{\chi ( D_i^{\circ} )} \Bigr\}. 
\end{equation}
\end{theorem}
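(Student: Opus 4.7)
The plan is to extend $f$ to a proper morphism $\tl{f}\colon Y\to \PP^1$ and apply A'Campo's formula at the fiber over $\infty$. Since the divisor $D\cup P^{-1}(0)\cup Q^{-1}(0)$ on $\overline{X}$ is strict normal crossing, the indeterminacy locus of the rational map $\bar{f}\colon \overline{X}\dashrightarrow \PP^1$ is contained in the codimension $\geq 2$ strata of this snc divisor. Starting from $\overline{X}$, I would perform a sequence of blow-ups along smooth centers contained in successive indeterminacy loci until one obtains a proper birational morphism $\pi\colon Y\to \overline{X}$ such that $\tl{f}:=\bar{f}\circ\pi$ is a well-defined morphism $Y\to \PP^1$ and $\pi^{-1}(D\cup P^{-1}(0)\cup Q^{-1}(0))$ is still strict normal crossing in $Y$. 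Since $\pi$ is an isomorphism over $X\setminus(P^{-1}(0)\cap Q^{-1}(0))$, which contains $f^{-1}(R)$ for every nonzero regular value $R$, we have $\tl{f}^{-1}(R)\simeq f^{-1}(R)$ with compatible monodromy, and $\zeta_{f}^{\infty}(t)$ equals the monodromy zeta function of $\tl{f}$ at $\infty\in\PP^1$.

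Using the local coordinate $s=1/t$ at $\infty$, proper base change and A'Campo's formula applied to $\tl{f}$ near $\infty$ give
\begin{equation}
\zeta_{f}^{\infty}(t)=\prod_{E}(1-t^{m_E})^{\chi(E^{\circ})},
\end{equation}
where $E$ runs over the irreducible components of the scheme-theoretic fiber $\tl{f}^{-1}(\infty)$, $m_E\in\ZZ_{>0}$ is the multiplicity of $E$ in it, and $E^{\circ}$ is the complement in $E$ of all the other irreducible components of the snc divisor $\pi^{-1}(D\cup P^{-1}(0)\cup Q^{-1}(0))$.

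Next I would identify the components of $\tl{f}^{-1}(\infty)$. They fall into three classes: (a) the proper transforms of those $D_i$ with $l_i>0$, with multiplicity $l_i$ and open stratum equal to $D_i^{\circ}$ as defined in the theorem; (b) the proper transform of the closure of $Q^{-1}(0)$ in $\overline{X}$, with multiplicity $1$ and open stratum equal to $Q^{-1}(0)\setminus P^{-1}(0)$; and (c) the exceptional divisors of $\pi$ lying entirely over $\infty$. For (a) and (b), the transversality hypothesis on $P^{-1}(0)$ and $Q^{-1}(0)$ together with the snc property on $\overline{X}$ yield the asserted multiplicities and open strata, because $\pi$ is an isomorphism near the generic points of these divisors.

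The main obstacle is to show that class (c) contributes trivially, since exceptional divisors may well lie in $\tl{f}^{-1}(\infty)$ (for example when one resolves $f=x/y^{l}$ with $l\geq 2$). The key point is that every blow-up is performed along a smooth center contained in a codimension $\geq 2$ stratum of the ambient snc divisor on the current intermediate variety, a property that is preserved inductively. Hence each exceptional divisor $F$ is a projective bundle over its center, and in a local toric chart $F^{\circ}$ is the complement in a projective space fiber of a union of coordinate hyperplanes, hence homotopy equivalent to an algebraic torus $(\CC^{*})^{r}$ with $r\geq 1$. This forces $\chi(F^{\circ})=0$, so such $F$ contribute $(1-t^{m_F})^{0}=1$ to the product. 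Collecting the nontrivial contributions from (a) and (b) then yields the desired formula.
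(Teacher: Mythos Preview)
Your overall strategy—resolve the indeterminacy of $\bar f$ by towers of blow-ups over the strata of $D\cup P^{-1}(0)\cup Q^{-1}(0)$ and then read off the zeta function from the special fibre over $\infty$—is precisely the paper's approach, and your treatment of the exceptional divisors in class (c) is essentially the argument of \cite{M-T-2} that the paper invokes. But there is a real gap in the first paragraph.

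The assertion $\tl f^{-1}(R)\simeq f^{-1}(R)$ is false in general. The map $\tl f\colon Y\to\PP^1$ is proper, so its generic fibre is a \emph{compactification} of $f^{-1}(R)$: any component of the boundary divisor along which $f\circ\pi$ has order zero (a $D_i$ with $l_i=0$, or an exceptional divisor created over a stratum where the orders of $P$ and $Q$ agree) is horizontal for $\tl f$ and contributes points to every fibre. Already for $X=\CC^2$, $P=x$, $Q=1$, after compactifying to $\PP^2$ and blowing up the single indeterminacy point $[0{:}1{:}0]$ one finds $\tl f^{-1}(R)\simeq\PP^1$ while $f^{-1}(R)\simeq\CC$; the zeta function of $\psi_{1/\tl f}(\CC_Y)$ is $(1-t)^2$, not the desired $1-t$. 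In particular, the classical A'Campo formula for the constant sheaf $\CC_Y$ does not compute $\zeta_f^{\infty}(t)$, and the version you wrote (with $E^{\circ}$ the complement of the \emph{full} snc divisor rather than just of the other components of the special fibre) is not that formula.

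The paper repairs this by replacing $\CC_Y$ with the extension by zero from the open part. Using Poincar\'e duality on the smooth fibre $f^{-1}(R)$, one has $\zeta_f^{\infty}(t)=\zeta(\F)_{h,\infty}(t)$ for $\F:=j_!Rf_!\CC_{X\setminus Q^{-1}(0)}\in\Dbc(\PP^1)$; after the same blow-ups this amounts to computing nearby cycles of the extension by zero of $\CC_{X\setminus Q^{-1}(0)}$ on $Y$. For \emph{that} sheaf the local zeta contributions at the deeper strata of the snc divisor are trivial, and one recovers exactly the product you wrote, with $E^{\circ}$ the complement of the whole snc divisor. With this one correction in place, the rest of your argument goes through.
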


\begin{proof}
Let $h:\PP^1 \rightarrow  \CC$ be a local coordinate at 
$\infty \in \PP$ such that $h(\infty)=0$. 
By $f: X \setminus Q^{-1}(0) 
\rightarrow \CC$ and the inclusion map 
$j: \CC \hookrightarrow \PP^1$ we set 
\begin{equation}
\F : = j_{!}Rf_{!}\CC_{X \setminus Q^{-1}(0)} 
\in \Dbc ( \PP ).
\end{equation}
Then we have 
$\zeta^{\infty}_f(t)= 
\zeta( \F )_{h, \infty}(t)$. Indeed, $\zeta^{\infty}_f(t)$ 
is equal to the zeta function associated to the 
monodromy automorphisms
\begin{equation}
H_j(f^{-1}(R); \CC ) \simto H_j(f^{-1}(R); \CC ) \qquad (j \in \ZZ ).
\end{equation}
Moreover we have the Poincar\'e duality isomorphisms
\begin{equation}
H_j(f^{-1}(R); \CC ) \simeq H_c^{2n-j-2}(f^{-1}(R); \CC ) 
\qquad (j \in \ZZ ).
\end{equation}
Hence the monodromy zeta function at infinity $\zeta^{\infty}_f(t)$ is 
equal to the one associated to the monodromy automorphisms 
\begin{equation}
H^j_c(f^{-1}(R); \CC ) \simto H^j_c(f^{-1}(R); \CC ) 
\qquad (j \in \ZZ ).
\end{equation}
As in the proof of \cite[Theorem 3.6]{M-T-2} we 
construct towers of blow-ups of $\overline{X}$ over 
the normal crossing divisor 
$D \cup P^{-1}(0) \cup Q^{-1}(0)$ to eliminate the 
points of indeterminacy of $f$. Then we 
obtain a proper morphism $\pi : Y \longrightarrow \overline{X}$ 
of complex manifolds which induces an isomorphism 
\begin{equation}
Y \setminus \pi^{-1}(D \cup P^{-1}(0) \cup Q^{-1}(0)) 
\simto \overline{X} \setminus (D \cup P^{-1}(0) \cup Q^{-1}(0))
\end{equation}
and the assertion follows from the proof of 
\cite[Theorem 3.6]{M-T-2}. 
\end{proof}

\begin{definition}\label{def-2-7}
We say that the rational function 
$f(x)= \frac{P(x)}{Q(x)}$ is polynomial-like 
at infinity if it satisfies the assumptions of 
Theorem \ref{the-51} and there exists a 
smooth compactification $\overline{X} \supset X$ of 
$X$ (satisfying the condition in Theorem \ref{the-51}) 
such that for any irreducible component $D_i$ of 
$D= \overline{X} \setminus X$ we have the condition 
\begin{equation}
{\rm ord}_{D_i} (P) < 
{\rm ord}_{D_i} (Q)
\end{equation}
i.e. $f$ has a pole of order ${\rm ord}_{D_i} (Q) 
- {\rm ord}_{D_i} (P)>0$ along $D_i$. 
\end{definition}

For $j \in \ZZ$ and $\lambda \in \CC$ 
and $R \gg 0$ 
we denote by 
\begin{equation}
H^j(f^{-1}(R); \CC )_{\lambda} \subset H^j(f^{-1}(R); \CC )
\end{equation}
the generalized eigenspace of $\Phi_{j}^{\infty}: 
H^{j}(f^{-1}(R); \CC )\simto H^{j}(f^{-1}(R); \CC )$ 
for the eigenvalue $\lambda$. 
Then as in Takeuchi-Tib{\u a}r \cite{T-T} 
we obtain the following result. 

\begin{theorem}\label{the-3211} 
Assume that $X$ is affine and 
the rational function 
$f(x)= \frac{P(x)}{Q(x)}$ is polynomial-like 
at infinity. 
Then for any $\lambda \not=1$ we have 
the concentration 
\begin{equation}
H^j(f^{-1}(R); \CC )_{\lambda} \simeq 0 
\qquad (j \not= \dim X-1). 
\end{equation}
Moreover the weight filtration on 
$H^{\dim X-1}(f^{-1}(R); \CC )_{\lambda}$ 
coincides with the monodromy filtration of 
$\Phi_{n-1}^{\infty}$. 
\end{theorem}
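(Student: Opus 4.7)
The plan is to execute the global-at-infinity analogue of the proofs of Theorems~\ref{the-3} and~\ref{weifilmonofil}: I first establish cohomological concentration in degree $n-1$ for the $\lambda\not=1$ eigen-part via Artin vanishing combined with a purity/support argument on $\PP^{1}$, and then identify the weight filtration with the monodromy filtration by invoking Lemma~\ref{weifillem} at the global level. The upper bound $H^{j}(f^{-1}(R);\CC)=0$ for $j>n-1$ (with $n:=\dim X$) is immediate from Artin's vanishing theorem, since $X\setminus Q^{-1}(0)$ is smooth affine of dimension $n$ and, for $R$ outside the bifurcation set, the fiber $f^{-1}(R)$ is a smooth affine closed subvariety of dimension $n-1$.

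For the remaining vanishing and the filtration comparison I would work on a proper model. Take a smooth compactification $\overline{X}\supset X$ as in Theorem~\ref{the-51} and a proper birational modification $\pi\colon Y\to \overline{X}$ so that $g:=f\circ\pi\colon Y\to\PP^{1}$ is a proper morphism with $E:=g^{-1}(\infty)$ a strict normal crossing divisor. The polynomial-like at infinity condition forces $\pi^{-1}(D)\subset E$, so the open immersion $\iota\colon V:=\pi^{-1}(X)\hookrightarrow Y$ is affine (its complement $\pi^{-1}(D)$ is a union of components of the divisor $E$); consequently $\iota_{!}$ and $R\iota_{*}$ preserve perversity, and $\iota_{!}\CC^{H}_{V}[n]$ (weights $\leq n$) and $R\iota_{*}\CC^{H}_{V}[n]$ (weights $\geq n$) are both mixed Hodge modules on $Y$. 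For $R\gg 0$, polynomial-likeness also forces $g^{-1}(R)\subset V$ with $\pi$ inducing $g^{-1}(R)\simto f^{-1}(R)$; combining proper base change for $g$ with the $t$-exactness of $\psi_{h}[-1]$ (for $h$ a local coordinate at $\infty\in\PP^{1}$) then yields
\[
H^{n-1+j}(f^{-1}(R);\CC)\simeq \mathcal{H}^{j}\psi_{h}\bigl(Rg_{*}(\iota_{!}\CC_{V}[n])\bigr)[-1]\big|_{\{\infty\}},\qquad j\in\ZZ,
\]
compatibly with monodromy.

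The main obstacle, and the global analogue of Propositions~\ref{ProII} and~\ref{pro-1}, is to prove that for $\lambda\not=1$ the natural morphism
\[
\psi^{H}_{h\circ g,\lambda}(\iota_{!}\CC^{H}_{V}[n])\longrightarrow \psi^{H}_{h\circ g,\lambda}(R\iota_{*}\CC^{H}_{V}[n])
\]
of mixed Hodge modules on $Y$ is an isomorphism. This reduces to a component-by-component nearby-cycle computation along the boundary divisor $\pi^{-1}(D)\subset E$, on which the cone of $\iota_{!}\CC^{H}_{V}[n]\to R\iota_{*}\CC^{H}_{V}[n]$ is supported, and the inequality $\mathrm{ord}_{D_{i}}P<\mathrm{ord}_{D_{i}}Q$ built into the polynomial-like at infinity condition is precisely what lets the argument of Proposition~\ref{ProII} carry over verbatim at each such component. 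Granting this isomorphism, I would push forward by the proper map $g$ and apply Lemma~\ref{weifillem} on $\PP^{1}$ to the pair $(Rg_{*}(\iota_{!}\CC^{H}_{V}[n]),\,Rg_{*}(R\iota_{*}\CC^{H}_{V}[n]))$ with $l=n$: the common value of $\psi^{H}_{h,\lambda}$ is then pure of weight $n$ and supported at the single point $\{\infty\}\subset\PP^{1}$, forcing its underlying perverse sheaf to be concentrated in one cohomological degree --- giving the lower-bound vanishing $H^{j}(f^{-1}(R);\CC)_{\lambda}=0$ for $j<n-1$ --- and identifying its weight filtration with the monodromy weight filtration of $\Phi^{\infty}_{n-1}$ centered at $n-1$.
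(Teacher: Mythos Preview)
Your overall strategy---transplanting the local arguments of Theorems~\ref{the-3} and~\ref{weifilmonofil} to the global-at-infinity setting via Lemma~\ref{weifillem}---is exactly the approach the paper indicates (by reference to \cite{T-T}). There are, however, two genuine gaps in the execution.

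The claim that $\pi$ induces $g^{-1}(R)\simto f^{-1}(R)$ is false. The rational map $f\colon\overline{X}\dashrightarrow\PP^{1}$ is already indeterminate along $P^{-1}(0)\cap Q^{-1}(0)\subset X$, so $\pi$ must blow up there; the resulting exceptional divisor is a $\PP^{1}$-bundle over $P^{-1}(0)\cap Q^{-1}(0)$ on which $g$ is the fiberwise coordinate, hence surjects onto $\PP^{1}$. Thus $g^{-1}(R)$ meets this exceptional divisor for every $R$, and is strictly larger than $f^{-1}(R)$. Likewise, when some $l_i=1$ and $D_i$ meets $\overline{P^{-1}(0)}$, the exceptional divisor over their intersection has $\mathrm{ord}(g)=-l_i+1=0$, so your inclusion $\pi^{-1}(D)\subset E$ also fails. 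The fix is to take $V=X\setminus Q^{-1}(0)$ (over which $\pi$ may be chosen to be an isomorphism): then $g^{-1}(R)\cap V=f^{-1}(R)$ exactly, $Rg_{*}(R\iota_{*}\CC_V)$ computes $H^{*}(f^{-1}(R);\CC)$ while $Rg_{*}(\iota_{!}\CC_V)$ computes $H^{*}_{c}(f^{-1}(R);\CC)$, and the complement $Y\setminus V$ is still a divisor so $\iota$ remains affine. The isomorphism of their $\psi_{h\circ g,\lambda}$ for $\lambda\ne 1$ must then be checked also along the order-zero exceptional divisors just described, not only along $\pi^{-1}(D)$; this is exactly where the blow-up analysis of Proposition~\ref{ProII} is needed, and it does go through, but it does not follow from the polynomial-like hypothesis ``verbatim''.

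Second, Lemma~\ref{weifillem} is stated for mixed Hodge \emph{modules}, whereas $Rg_{*}(\iota_{!}\CC^{H}_{V}[n])$ is a genuine complex on $\PP^{1}$ (since $g$ has positive relative dimension); in particular its $\psi^{H}_{h,\lambda}$ is not ``pure of weight $n$'' as you write. Apply the lemma on $Y$ instead, to the pair $\iota_{!}\CC^{H}_{V}[n]$ and $R\iota_{*}\CC^{H}_{V}[n]$ (which \emph{are} single mixed Hodge modules), and then push the resulting identification of weight and monodromy filtrations forward along the proper map $g$. The concentration itself is not a consequence of the lemma: it follows by combining Artin vanishing ($H^{j}=0$ for $j>n-1$) with the isomorphism $H^{j}_{c}(f^{-1}(R);\CC)_{\lambda}\simeq H^{j}(f^{-1}(R);\CC)_{\lambda}$ and the dual vanishing $H^{j}_{c}=0$ for $j<n-1$.
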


Combining the formula for 
$\zeta_{f}^{\infty}(t) \in \CC (t)$ in Theorem \ref{the-51} 
with Theorem \ref{the-3211} above, we obtain a 
formula for the multiplicities of the eigenvalues 
$\lambda \not= 1$ in $\Phi_{\dim X-1}^{\infty}$. 

From now on, we consider the special case where $X= \CC^n$ 
and $P(x), Q(x)$ are convenient polynomials. 
Let $\Gamma_{\infty}(P) 
\subset \RR^n$ (resp. $\Gamma_{\infty}(Q) \subset \RR^n$) 
be the convex hull of $\{ 0 \} \cup NP(P)$ 
(resp. $\{ 0 \} \cup NP(Q)$) in $\RR^n$ and 
\begin{equation}
\Gamma_{\infty}(f)= \Gamma_{\infty}(P) + \Gamma_{\infty}(Q) 
\end{equation}
their Minkowski sum. Since $P(x), Q(x)$ are 
convenient, they are $n$-dimensional polytopes in $\RR^n$. 
As in the case of $\Gamma_{+}(f)$,  
for each face 
$\gamma \prec \Gamma_{\infty}(f)$ 
we have the corresponding faces 
\begin{equation}
\gamma(P) \prec \Gamma_{\infty}(P), \qquad 
\gamma(Q) \prec \Gamma_{\infty}(Q)
\end{equation}
such that 
\begin{equation}
\gamma = \gamma(P) + \gamma(Q). 
\end{equation}

\begin{definition}\label{def-3101}
We say that the rational 
function $f(x)= \frac{P(x)}{Q(x)}$ is 
non-degenerate at infinity if for any 
face $\gamma$ of $\Gamma_{\infty}(f)$ 
such that $0 \notin \gamma$ the complex 
hypersurfaces $\{x \in T=(\CC^*)^n\ |\ 
P^{\gamma(P)}(x)=0\}$ and $\{x \in T=(\CC^*)^n\ |\ 
Q^{\gamma(Q)}(x)=0\}$ are smooth and reduced 
and intersect transversally in $T=(\CC^*)^n$. 
\end{definition}

For a subset 
$S \subset \{ 1,2, \ldots, n \}$ we set 
\begin{equation}
\Gamma_{\infty}(f)^S= \Gamma_{\infty}(f) \cap \RR^S. 
\end{equation}
Similarly, we define 
$\Gamma_{\infty}(P)^S, \Gamma_{\infty}(Q)^S \subset \RR^S$ 
so that we have 
\begin{equation}
\Gamma_{\infty}(f)^S= \Gamma_{\infty}(P)^S + \Gamma_{\infty}(Q)^S. 
\end{equation}
Let $\gamma_1^S, \gamma_2^S, \ldots, \gamma_{n(S)}^S$ 
be the facets of $\Gamma_{\infty}(f)^S$ 
such that $0 \notin \gamma_i^S$ and for 
each $\gamma_i^S$ ($1 \leq i \leq n(S)$) consider 
the corresponding faces 
\begin{equation}
\gamma_i^S(P) \prec \Gamma_{\infty}(P)^S, \qquad 
\gamma_i^S(Q) \prec \Gamma_{\infty}(Q)^S
\end{equation}
such that 
\begin{equation}
\gamma_i^S= \gamma_i^S(P) + \gamma_i^S(Q). 
\end{equation}
By using the primitive outer conormal vector 
$\alpha_i^S \in \ZZ_+^S \setminus \{ 0 \}$ 
of the facet $\gamma_i^S \prec \Gamma_{\infty}(f)^S$ 
we define the lattice distance 
$d_i^S(P)>0$ (resp. $d_i^S(Q)>0$) of 
$\gamma_i^S(P)$ (resp. $\gamma_i^S(Q)$) from 
the origin $0 \in \RR^S$ and set 
\begin{equation}
d_i^S=d_i^S(P)-d_i^S(Q) \in \ZZ. 
\end{equation}
Finally we define $v_i^S >0$ as in Section \ref{sec:s3}. 
Then we obtain the following result. 

\begin{theorem}\label{the-2-1} 
Assume that the rational 
function $f(x)= \frac{P(x)}{Q(x)}$ is 
non-degenerate at infinity and 
the hypersurfaces $P^{-1}(0)$ and 
$Q^{-1}(0)$ are smooth and intersect 
transversally in $X= \CC^n$.  
Then we have 
\begin{equation}
\zeta_{f}^{\infty}(t) = 
(1-t)^{\chi (Q^{-1}(0) \setminus P^{-1}(0))} \cdot 
\prod_{S \not= \emptyset} \Bigl\{ 
\prod_{i: d_i^S >0} 
( 1- t^{d_i^S})^{(-1)^{|S|-1} v_i^S} \Bigr\}. 
\end{equation}
\end{theorem}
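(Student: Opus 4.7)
The plan is to combine Theorem \ref{the-51} with the toric resolution strategy used in the proof of Theorem \ref{the-2}. First, I would choose a smooth projective toric compactification $\overline{X}=Z_{\Sigma}$ of $\CC^n$ associated to a smooth subdivision $\Sigma$ of a complete fan in $\RR^n$ that extends the first-orthant fan of $\CC^n$ and refines the outer normal fan of $\Gamma_{\infty}(f)$. After further blowing up along the strict transforms of $P^{-1}(0) \cap Q^{-1}(0)$ in $\overline{X}$, we may arrange that $D \cup P^{-1}(0) \cup Q^{-1}(0)$ is a strict normal crossing divisor, so that Theorem~\ref{the-51} applies and expresses $\zeta_f^{\infty}(t)$ as a product over the irreducible components of $D=\overline{X}\setminus \CC^n$.

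Next, I would organize the irreducible components $D_{\rho}$ of $D$ according to which coordinate stratum $T_{S}$ they meet generically. By the refinement condition on $\Sigma$, a ray $\rho \in \Sigma$ giving rise to a component with $l_{\rho}>0$ has primitive generator equal (up to scalar) to the primitive outer conormal vector $\alpha_{i}^{S}$ of a unique facet $\gamma_{i}^{S} \prec \Gamma_{\infty}(f)^{S}$ with $0 \notin \gamma_{i}^{S}$. A direct toric calculation then gives
\begin{equation}
l_{\rho} = {\rm ord}_{D_{\rho}}(Q) - {\rm ord}_{D_{\rho}}(P) = d_{i}^{S}(P) - d_{i}^{S}(Q) = d_{i}^{S},
\end{equation}
and the open stratum $D_{\rho}^{\circ}$ is naturally identified with the complement in a torus orbit $\simeq (\CC^*)^{|S|-1}$ of the non-degenerate subvarieties cut out by the initial forms $P^{\gamma_{i}^{S}(P)}$ and $Q^{\gamma_{i}^{S}(Q)}$.

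Then, to compute each $\chi(D_{\rho}^{\circ})$ I would invoke the BKK theorem (Theorem~\ref{BKK}) on these non-degenerate hypersurfaces in $(\CC^*)^{|S|-1}$, together with the standard Euler-characteristic expansion for the complement of a pair of non-degenerate hypersurfaces in an algebraic torus. Inclusion-exclusion unfolds the expression into the sum of mixed volumes appearing in $v_{i}^{S}$, yielding $\chi(D_{\rho}^{\circ}) = (-1)^{|S|-1} v_{i}^{S}$. Substituting this into Theorem~\ref{the-51} produces the displayed product over $S$ and $i$; the factor $(1-t)^{\chi(Q^{-1}(0) \setminus P^{-1}(0))}$ comes from the interior pole locus $Q^{-1}(0)$ and is common to both formulas, so it passes through unchanged.

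The main obstacle is the combinatorial bookkeeping in the second and third steps. One must verify that the further blow-ups needed to resolve the indeterminacy of $f$ and achieve normal crossings never introduce new components with $l_{\rho}>0$ beyond those already coming from the outer facets of the polytopes $\Gamma_{\infty}(f)^{S}$, and one must carefully match the inclusion-exclusion over coordinate subspaces with the definition of $v_{i}^{S}$. Both should follow from the non-degeneracy of $f$ at infinity and the convenience of $P$ and $Q$, by an explicit toric computation paralleling that of \cite[Theorem 3.12]{M-T-1}, but verifying this matching is where all the real work lies.
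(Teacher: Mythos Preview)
Your strategy is the paper's own: take a smooth toric compactification $X_{\Sigma}\supset\CC^n$ whose fan refines the dual fan of $\Gamma_{\infty}(f)$ (the paper observes that convenience of $P,Q$ forces every face of $\RR^n_+$ to be a cone of this dual fan, so one may subdivide without touching them) and then apply Theorem~\ref{the-51} directly; the paper's proof literally stops at that sentence, leaving the combinatorial reduction you correctly flag as ``where all the real work lies'' implicit, by analogy with the local computation in Theorem~\ref{the-2} and \cite[Theorem~3.12]{M-T-1}. One small simplification over your sketch: the extra blow-up along the strict transform of $P^{-1}(0)\cap Q^{-1}(0)$ is unnecessary, since non-degeneracy at infinity together with the smoothness and transversality hypotheses in $\CC^n$ already make $D\cup\overline{P^{-1}(0)}\cup\overline{Q^{-1}(0)}$ a strict normal crossing divisor in $X_{\Sigma}$, so Theorem~\ref{the-51} applies with no further modification.
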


\begin{proof}
Let $\Sigma_f$ be the dual fan of 
$\Gamma_{\infty}(f)$ in $\RR^n$. Since 
$P(x), Q(x)$ are convenient, any face of 
$\RR^n_+$ is a cone in it. Then we can 
construct a smooth subdivision $\Sigma$ 
of $\Sigma_f$ without subdividing such 
cones. In other words, the fan 
$\Sigma_0$ in $\RR^n_+$ 
formed by all the faces of $\RR^n_+$ is 
a subfan of $\Sigma$. 
Denote by $X_{\Sigma}$ the smooth 
toric variety associated to it and 
containing $X= \CC^n$. Then we 
obtain the assertion just by applying 
Theorem \ref{the-51} to the smooth 
compactification $X_{\Sigma} \supset X$ 
of $X= \CC^n$. 
\end{proof}

If moreover $\Gamma_{\infty}(Q) \subset \subset 
\Gamma_{\infty}(P)$, then the rational function 
$f(x)= \frac{P(x)}{Q(x)}$ is polynomial-like 
at infinity in the sense of Definition 
\ref{def-3101} and we obtain also  
combinatorial descriptions 
of the Jordan normal forms for 
the eigenvalues 
$\lambda \not= 1$ in $\Phi_{n-1}^{\infty}$ 
and its reduced Hodge spectra at infinity. 
We leave their precise formulations to the 
readers.

\appendix
\section{Proof of Lemma~\ref{weifillem} by 
Takahiro Saito}\label{app} 

In this appendix, we prove Lemma~\ref{weifillem} 
in the main paper. For $k \in \ZZ$ we set 
\begin{equation}
L_{k}\psi_{g,\lambda}^{H}(M):=
\psi_{g,\lambda}^{H}(W_{k+1}M), \qquad 
L_{k}\psi^{H}_{g,\lambda}(M'):=
\psi_{g,\lambda}^{H}(W_{k+1}M'). 
\end{equation}
Recall that the weight filtration 
$W_{\bullet}\psi^{H}_{g,\lambda}(M)$ of 
$\psi_{g,\lambda}^{H}(M)$ is the relative monodromy 
filtration with respect to the filtration 
$L_{\bullet}\psi^{H}_{g,\lambda}(M)$. Namely for any 
$k\in \ZZ$ the filtration on $\GR^{L}_{k}
\psi_{g,\lambda}^{H}(M)$ induced by the weight 
filtration $W_{\bullet}\psi^{H}_{g,\lambda}(M)$ 
is the monodromy filtration centered at $k$.
Therefore, it is enough to show that 
\begin{equation}
\GR^{L}_{k} \psi^{H}_{g,\lambda}(M)=0 
\end{equation}
for any $k\neq l-1$. 
Take a sufficiently large $k_{0}(>l-1)$ such that 
\begin{equation}
L_{k_{0}}\psi_{g,\lambda}^{H}(M)=
\psi_{g,\lambda}^{H}(M), \qquad 
L_{k_{0}}\psi^{H}_{g,\lambda}(M')=
\psi_{g,\lambda}^{H}(M'). 
\end{equation}
Since we have $\psi^{H}_{g,\lambda}(M) \simto  
\psi^{H}_{g,\lambda}(M')$, 
for such $k_{0}$ the morphism $\GR^{L}_{k_{0}}
\psi^{H}_{g,\lambda}(M) \to \GR^{L}_{k_{0}} 
\psi^{H}_{g,\lambda}(M')$ is an epimorphism. 
On the other hand, by the exactness of the functor 
$\psi^{H}_{g,\lambda}( \cdot )$, it follows from 
our assumption on the weights of $M$ that 
we have $\GR^{L}_{k_{0}} 
\psi^{H}_{g,\lambda}(M)=\psi^{H}_{g,\lambda}(
\GR^{W}_{k_{0}+1}M)=0$. 
Therefore, $\GR^{L}_{k_{0}}\psi^{H}_{g,\lambda}(M')$ 
is also zero and hence we obtain 
\begin{equation}
\psi_{g,\lambda}^{H}(M')= L_{k_{0}}\psi^{H}_{g,\lambda}(M')
=L_{k_{0}-1} \psi^{H}_{g,\lambda}(M'). 
\end{equation}
Repeating this argument, we get $\GR^{L}_{k}
\psi_{g,\lambda}^{H}(M')=0$ for any $k>l-1$.
Similarly, we can show that $\GR^{L}_{k}
\psi_{g,\lambda}^{H}(M')=0$ for any $k\neq l-1$. 
This completes the proof.

\end{document}